\definecolor{ghcolor}{RGB}{0, 150, 200} 
\definecolor{winestain}{rgb}{0.5,0,0}
\newtheorem{theorem}[subsubsection]{Theorem}
\newtheorem{thm}[subsubsection]{Theorem}
\newtheorem{lemma}[subsubsection]{Lemma}
\newtheorem{lem}[subsubsection]{Lemma}
\newtheorem{cor}[subsubsection]{Corollary}
\newtheorem{prop}[subsubsection]{Proposition}
\theoremstyle{definition}
\newtheorem{defn}[subsubsection]{Definition}
\newtheorem{construction}[subsubsection]{Construction}
\newtheorem{notation}[subsubsection]{Notation}
\newtheorem{Notation}[subsubsection]{Notation}
\newtheorem{convention}[subsubsection]{Convention}
\newtheorem{remark}[subsubsection]{Remark}
\newtheorem{rem}[subsubsection]{Remark}
\newtheorem{para}[subsubsection]{}
\numberwithin{equation}{subsection}
\def \into {\hookrightarrow }
\def \to {\rightarrow}
\newcommand{\Vect}{\mathrm{Vect}}
\newcommand{\GL}{\mathrm{GL}}
\def\Mat{\mathrm{Mat}}
\DeclareMathOperator{\rep}{Rep}
\renewcommand{\hom}{\mathrm{Hom}}
\DeclareMathOperator{\gal}{Gal}
\def\inf{{\mathrm{inf}}}
\def\sup{\mathrm{sup}}
\def\an{\mathrm{an}}
\def\perf{\mathrm{perf}}
\newcommand{\Lie}{\mathrm{Lie}}
\newcommand{\Kpinfty}{{K_{p^\infty}}}
\newcommand{\kpinfty}{{K_{p^\infty}}}
\newcommand{\hatkpinfty}{{\widehat{K_{p^\infty}}}}
\newcommand{\Kinfty}{{K_{\infty}}}
\newcommand{\kinfty}{{K_{\infty}}}
\newcommand{\gammak}{{\Gamma_K}}
\newcommand{\gk}{{G_K}}
\newcommand{\la}{{\mathrm{la}}}
\newcommand{\dan}{\text{$\mbox{-}\mathrm{an}$}}
\newcommand{\dla}{\text{$\mbox{-}\mathrm{la}$}}
\newcommand{\dpa}{\text{$\mbox{-}\mathrm{pa}$}}
\def\crys{{\mathrm{crys}}}
\def\Sen{{\mathrm{Sen}}}
\def\rig{{\mathrm{rig}}}
\newcommand{\ainf}{{\mathbf{A}_{\mathrm{inf}}}}
\newcommand{\bcrisplus}{{\mathbf{B}^+_{\mathrm{cris}}}}
\newcommand{\bdrplus}{{\mathbf{B}^+_{\mathrm{dR}}}}
 \newcommand{\A}{ {\mathbf{A}}   }
\newcommand{\B}{  {\mathbf{B}}  }
\newcommand{\wtb}{   {\widetilde{{\mathbf{B}}}}  }
 \def \O {{\mathcal{O}}}
\def \ok {{\mathcal{O}_K}}
\def \oc {{\mathcal{O}_C}}
\newcommand*{\wt}[1]{\widetilde{#1}}
\newcommand*{\wh}[1]{\widehat{#1}}
\newcommand{\Zp}{{\mathbb{Z}_p}}
\newcommand{\Qp}{{\mathbb{Q}_p}}
\newcommand{\zp}{{\mathbb{Z}_p}}
\newcommand{\qp}{{\mathbb{Q}_p}}
   \def \calO {{\mathcal{O}}}
\newcommand{\gs}{{\mathfrak{S}}}
\newcommand{\fkm}{{\mathfrak{m}}}
\newcommand{\fkt}{{\mathfrak{t}}}
\newcommand{\cbf}{\mathbf{c}}
\newcommand{\kbf}{\mathbf{k}}
\DeclareSymbolFontAlphabet{\mathbb}{AMSb}
\DeclareSymbolFontAlphabet{\mathbbl}{bbold}
\newcommand{\prism}{{\mathlarger{\mathbbl{\Delta}}}}
\newcommand{\Prism}{{\mathlarger{\mathbbl{\Delta}}}}
\newcommand{\oprism}{{\mathcal{O}_\prism}}
\newcommand{\okprism}{{(\ok)_\prism}}
\newcommand{\baroprism}{{\overline{\O}_\prism}}
\newcommand{\ocflat}{{\mathcal{O}_C^\flat}}
\newcommand{\smat}[1]{\left( \begin{smallmatrix} #1 \end{smallmatrix} \right)}
\newcommand{\dacc}[1]{\{\!\{ #1 \}\!\}}
\newcommand{\bM}{{\mathbb M}}
\renewcommand{\bm}{{\mathbb M}}
\begin{document}
\title[]{Hodge-Tate prismatic crystals and Sen theory}

\date{\today} 
\author[]{Hui Gao}   \address{Department of Mathematics, Southern University of Science and Technology, Shenzhen 518055,  China}   \email{gaoh@sustech.edu.cn}

\subjclass[2010]{Primary  11F80, 11S20}
\keywords{prismatic site, prismatic crystals, Sen theory}
\begin{abstract}
Let $K$ be a mixed characteristic complete discrete valuation field with perfect residue field, and let $\kinfty/K$ be a Kummer tower extension by adjoining   a compatible system of $p$-power roots of a chosen uniformizer. 
We use this Kummer tower to reconstruct Sen theory which classically is  obtained using the cyclotomic tower. Using this Sen theory over the Kummer tower, we prove a conjecture of Min-Wang which predicts that    Hodge-Tate prismatic crystals are determined by the Sen operator; this implies that the category of  (rational)  Hodge-Tate prismatic crystals is equivalent to the category of  nearly Hodge-Tate representations.
\end{abstract}

\maketitle
\tableofcontents

\section{Introduction}\label{secintro}
  The   goal of this paper is to study Hodge-Tate prismatic crystals, and the main result is stated in  \S \ref{s11}.
 We  explain how to reduce the prismatic theorem  to a more ``classical" (non-prismatic) problem, which in turn will be addressed in \S \ref{s12} using   Sen theory over the Kummer tower.

\subsection{Prismatic site and prismatic crystals}\label{s11}

In \cite{BSprism}, Bhatt-Scholze introduce  the  prismatic site and use  it to recover all known \emph{integral} $p$-adic cohomology theories, including the ones constructed in the work of  Bhatt-Morrow-Scholze \cite{BMS1, BMS2}.
The cohomology theories in \cite{BSprism} are defined over the \emph{relative} prismatic site, where one only considers prisms over a \emph{fixed} prism. 
Nonetheless, an \emph{absolute} prismatic site  is also defined and used therein.
It turns out this absolute prismatic site is more suitable to study ``arithmetic" problems. 
For example,  Ansch\"utz-Le Bras \cite{ALB} show that for a ($\zp$-flat) quasi-syntomic ring $R$, filtered prismatic Dieudonn\'e crystals   on $R_\prism$ (the absolute prismatic site of $R$) classify $p$-divisible groups over $R$. 
Recently, Bhatt-Scholze \cite{BSFcrystal} show that for $\ok$  a mixed characteristic complete discrete valuation ring with perfect residue field,   prismatic $F$-crystals on $(\ok)_\prism$ classify integral crystalline Galois representations.
In this paper, we study   \emph{Hodge-Tate prismatic crystals} on $(\ok)_\prism$.
Let us now quickly set up some notations to facilitate our discussions.

\begin{notation}   \label{notaprism}
\begin{enumerate}
\item Let $\ok$ be a mixed characteristic complete discrete valuation ring with perfect residue field $k$. Let $K$ be its fraction field; fix an algebraic closure $\overline{K}$ and denote $\gk=\gal(\overline{K}/K)$. 
 Let $\rep_\gk(\zp)$ be the category of finite free (continuous) $\zp$-representations of $\gk$, and let $\mathrm{Rep}^{\crys}_{\gk}(\zp)$ be the sub-category of integral crystalline representations.

\item  Let $\okprism$ denote  the absolute  prismatic site of $\ok$. It  is the opposite of the category of bounded prisms $(A,I)$ together with a map $\ok \to A/I$, endowed with the Grothendieck topology for which covers are morphisms of prisms $(A, I) \to (B,J)$, such that the underlying ring map $A\to B$ is $(p,I)$-completely faithfully flat. 
Let $(\calO_K)^{\perf}_{\Prism}$ be the sub-site consisting of perfect prisms.

\item 
Let $\mathcal{O}_\Prism$ be the  {structure sheaf} on $\okprism$, so that $\mathcal{O}_\Prism((A, I))=A.$
We    define the  sheaf  $\mathcal{I}_\prism$ resp. $\baroprism$ so that 
$$\mathcal{I}_\prism((A, I))  = I \text{ resp. } \baroprism((A, I)) = A/I.$$
Let $\mathcal{O}_\Prism[1/\mathcal{I}_\Prism]^{\wedge}_p$ be the $p$-adic completion of $\mathcal{O}_\Prism[1/\mathcal{I}_\Prism]$, and let $\baroprism[1/p]$ be the sheaf with $p$ inverted.
 See \cite[\S 2]{BSFcrystal} for more discussion about these sheaves.
 
\item 
 Let $\mathrm{Vect}(\okprism,\mathcal{O}_\Prism)$ denote the category of {\em prismatic  crystals (of vector bundles) on $\okprism$}. Namely, an object $\bM$ is a sheaf of $\oprism$-modules such that for any $(A,I)\in \okprism$, $\bM((A,I))$ is a finite projective $A$-module and that for any morphism $(A,I)\to (B,J)$, the natural map
    \[\bM((A,I))\otimes_A B\to \bM((B,J))\]
    is an isomorphism.
    Let $\mathrm{Vect}^{\varphi}((\calO_K)_{\Prism}, \mathcal{O}_\Prism)$ denote the category of  {\em prismatic  $F$-crystals (of vector bundles) on $\okprism$}, where an object is a prismatic  crystal $\bM$  equipped with an identification 
    $$\varphi_{\bM}: \varphi^* \bM[1/\mathcal{I}_\Prism] \simeq \bM[1/\mathcal{I}_\Prism].$$
   
\item We can analogously define:
\begin{itemize}
\item $\mathrm{Vect}^{\varphi}(\okprism, \mathcal{O}_\Prism[1/\mathcal{I}_\Prism]^{\wedge}_p)$, called the category of \emph{Laurent prismatic $F$-crystals}, cf. \cite[Def. 3.2, Ex. 4.4]{BSFcrystal}.
\item $\Vect((\calO_K)_{\Prism},\overline \calO_{\Prism}[ {1}/{p}])$, called the category of \emph{rational Hodge-Tate prismatic crystals}.
\item $\Vect((\calO_K)^{\perf}_{\Prism},\overline \calO_{\Prism}[ {1}/{p}])$, called the category of \emph{rational Hodge-Tate prismatic crystals on the perfect prismatic site}.
\end{itemize}
 Note the last two categories (defined in \cite{MW})   do not have $\varphi$-structures; cf. also \S \ref{sHT} for   detailed definitions. 
\end{enumerate}
\end{notation}

The main results of \cite{BSFcrystal} can be summarized by the following diagram (of tensor functors), where we use $\simeq$ resp. $\hookrightarrow$ to signify an equivalence resp. a fully faithful functor.
 \begin{equation}\label{diagbs21}
\begin{tikzcd}
{\mathrm{Vect}^{\varphi}((\calO_K)_{\Prism}, \mathcal{O}_\Prism)} \arrow[rr, hook] \arrow[d, "\simeq"] &  & {\mathrm{Vect}^{\varphi}(\okprism, \mathcal{O}_\Prism[1/\mathcal{I}_\Prism]^{\wedge}_p)} \arrow[d, "\simeq"] \\
\mathrm{Rep}^{\crys}_{\gk}(\zp) \arrow[rr, hook]                                                              &  & \rep_\gk(\zp)                                                                                               
\end{tikzcd}
\end{equation}
Here, the left vertical equivalence is their main theorem \cite[Thm. 1.2]{BSFcrystal}, and the right vertical equivalence is a special case of \cite[Cor. 3.7, Ex. 4.4]{BSFcrystal}.  
(Recently, this picture is generalized to the semi-stable case by Du-Liu \cite{DL21}, using the log-prismatic site of Koshikawa \cite{Kos21}.)

 Our main result Thm. \ref{thmintro} constructs an analogous diagram.
To state the theorem, we quickly recall Sen theory  \cite{Sen80}.
Let $C$ be the $p$-adic completion of $\overline{K}$, on which $G_K$ acts continuously.
Let $\rep_\gk(C)$ be the category of ``$C$-representations": an object is a finite dimensional $C$-vector space together with a continuous and \emph{semi-linear} action of $G_K$.
Let $\kpinfty$ be the (cyclotomic) extension of $K$ by adjoining all $p$-power roots of unity.
  To a $C$-representation, Sen    constructs a canonical  finite dimensional  $\kpinfty$-vector space equipped with a  linear operator;
this operator is   nowadays  called the \emph{Sen operator}, whose eigenvalues   are   called the (Hodge-Tate-)Sen weights   of the corresponding $C$-representation.

\begin{notation}\label{notaeu}
Let $W(k)$ be the ring of Witt vectors, and let $K_0=W(k)[1/p]$.
Let $\pi \in K$ be a \emph{fixed} uniformizer, and let $E(u)=\mathrm{Irr}(\pi, K_0) \in W(k)[u]$ be the minimal polynomial over $K_0$. Let $E'(u) =\frac{d}{du}E(u)$.
\end{notation}

\begin{defn}\label{defnnht}
Say $W\in \rep_\gk(C)$ is \emph{nearly Hodge-Tate}   if all of its  Sen weights are in the subset
\begin{equation}
\label{eqeigen}
\mathbb{Z} + (E'(\pi))^{-1}\cdot \mathfrak{m}_{\O_{\overline{K}}},   \end{equation}
 where $\O_{\overline{K}}$ is the ring of integers of $\overline{K}$ with $\mathfrak{m}_{\O_{\overline{K}}}$  its maximal ideal.
 That is: the Sen weights are near to being an integer up to a bounded distance. (Recall by \cite[Chap. III, \S 6, Cor. 2]{Serrelocal}, the ideal $E'(\pi)\cdot\ok$ is precisely the different $\mathfrak{D}_{\ok/W(k)}$ and hence is independent of choices of $\pi$; thus our definition is also independent of choices of $\pi$.)
Write $\rep_\gk^{\mathrm{nHT}}(C)$ for the (tensor) subcategory of $\rep_\gk(C)$ consisting of these objects.
\end{defn}

\begin{remark}
Let  $W\in \rep_\gk(C)$ and let $L/K$ be a finite extension. Then $W$ is nearly Hodge-Tate implies that the restriction $W|_{G_L}$ is also nearly Hodge-Tate, but not vice versa (unless $L/K$ is unramified).
\end{remark}



The following is our main theorem, which in particular  confirms a conjecture of Yu Min and Yupeng Wang \cite[Conj. 3.17]{MW}.

\begin{theorem}\label{thmintro}
We have a commutative diagram of tensor functors:
\begin{equation}\label{diagnht}
\begin{tikzcd}
{\Vect((\calO_K)_{\Prism},\overline \calO_{\Prism}[\frac{1}{p}])} \arrow[rr, hook] \arrow[d, "\simeq"] &  & {\Vect((\calO_K)^{\perf}_{\Prism},\overline \calO_{\Prism}[\frac{1}{p}])} \arrow[d, "\simeq"] \\
\rep_\gk^{\mathrm{nHT}}(C) \arrow[rr, hook]                                               &  & \rep_\gk(C)                                                                                  
\end{tikzcd}
\end{equation}
\end{theorem}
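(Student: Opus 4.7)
The plan is to address each arrow in the diagram \eqref{diagnht} separately, with the left vertical equivalence being the substantive new point. The bottom horizontal inclusion is tautological from Definition \ref{defnnht}. For the right vertical equivalence, I would evaluate a crystal in $\Vect((\calO_K)^{\perf}_{\Prism},\baroprism[\tfrac{1}{p}])$ at the perfect prism $(\Ainf(\oc),\ker\theta)$: this yields a finite projective $C$-module, and the \v{C}ech nerve in the perfect prismatic site translates the crystal-descent into a continuous semilinear $G_K$-action on $C$, in the spirit of the arguments in \cite{BSFcrystal} and \cite{MW}. The top horizontal arrow will then be fully faithful by a diagram chase, granted the left vertical equivalence.

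The heart of the theorem is therefore the left vertical equivalence. Given $\bM \in \Vect(\okprism,\baroprism[\tfrac{1}{p}])$, I would first evaluate at the Breuil-Kisin prism $(\gs,(E(u)))$ to obtain a finite projective $K$-module $M$. The crystal descent data on the self-products of $\gs$ in $\okprism$, following the stratification viewpoint of \cite{MW}, encodes a canonical prismatic Sen operator $\theta_M \in \End_K(M)$. The Kummer-Sen theory of \S\ref{s12} should then identify $M \otimes_K C$, together with its naturally induced $G_K$-action, with a $C$-representation $W$ whose classical (cyclotomic) Sen operator agrees with $\theta_M$ up to an explicit normalizing scalar coming from the Kummer tower. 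Since $\theta_M$ is already defined over $K$, its eigenvalues after this rescaling are forced into the subset \eqref{eqeigen}, proving the essential image lies in $\rep_C^{\mathrm{nHT}}(\gk)$. Essential surjectivity runs the Kummer-Sen dictionary in reverse: starting from a nearly Hodge-Tate $W$, the near-integrality of the Sen weights is exactly what is needed for the classical Sen operator to descend to a Kummer-Sen operator on a $K$-module whose associated stratification defines a prismatic crystal over $\gs$.

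The key obstacle is the quantitative nature of the nearly Hodge-Tate bound \eqref{eqeigen}: the factor $p^{-(e-1)/e}$ in front of $\mf{m}_{\calO_{\overline K}}$ is not merely a qualitative smallness condition but a precise $p$-adic radius reflecting the discrepancy between the Breuil-Kisin uniformizer $E(u)$ and the cyclotomic normalization underlying classical Sen theory. Matching this bound is precisely what requires the Kummer-Sen construction of \S\ref{s12}, since the classical cyclotomic Sen theory has no direct interface with the Breuil-Kisin prism. Once the Kummer-Sen comparison is set up with sharp $p$-adic estimates, both fully faithfulness and essential surjectivity of the left vertical arrow reduce to a linear-algebraic translation between prismatic descent data and Sen operators on $C$-representations.
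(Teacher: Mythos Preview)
Your overall architecture matches the paper's, but there are two genuine gaps in the argument for the left vertical equivalence.

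First, your justification that the functor lands in $\rep_C^{\mathrm{nHT}}(\gk)$ is wrong as stated. You write that ``since $\theta_M$ is already defined over $K$, its eigenvalues after this rescaling are forced into the subset \eqref{eqeigen}.'' Being defined over $K$ imposes no such constraint: any element of $\overline K$ is an eigenvalue of some $K$-linear endomorphism. The actual source of the bound \eqref{eqeigen} is the \emph{convergence condition} in Prop.~\ref{matrix of stratification}: the stratification is encoded not just by the matrix $A_1$ but by the requirement that $\prod_{i=0}^n(iE'(\pi)+A_1)\to 0$ $p$-adically, and it is precisely this condition (see Cor.~\ref{cor416}) that pins the eigenvalues of $-A_1/E'(\pi)$ to the set \eqref{eqeigen}. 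The Kummer--Sen theory then identifies $-A_1/E'(\pi)$ with the Sen operator (Thm.~\ref{thmMWSen}), which is how one concludes $W$ is nearly Hodge--Tate. You have the roles of ``$K$-rationality'' and ``convergence'' swapped.

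Second, your essential surjectivity sketch (``run the Kummer--Sen dictionary in reverse'') is incomplete, and your full faithfulness is absent. For essential surjectivity the paper does \emph{not} use Kummer--Sen theory at all: it invokes Sen's classical result \cite[Thm.~5]{Sen80} that for \emph{any} $W\in\rep_C(\gk)$ the (cyclotomic) Sen operator stabilizes a $K$-subspace $M\subset D_{\Sen,\kpinfty}(W)$ of full dimension; the nearly Hodge--Tate hypothesis then guarantees the convergence condition on $(M,-E'(\pi)\phi_{\Sen}|_M)$, so Cor.~\ref{cor416} produces the crystal. Your proposed reverse route would land you in a $\kinfty$-vector space, and descending further to $K$ is exactly the nontrivial input of Sen's theorem, which you do not mention. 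For full faithfulness the paper reduces, via internal Hom, to showing $M^{f=0}=W^{G_K}$, which is \cite[Thm.~6]{Sen80}; you should include this.
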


\begin{remark} 
We comment on the genesis of the theorem, and its relation with other works.
\begin{enumerate}
\item In the diagram \eqref{diagnht}, the right vertical equivalence  is already proved by Min-Wang \cite[Thm. 3.12]{MW}, cf. also  Prop. \ref{crystal is C-rep} in this paper for a review.
The left vertical equivalence,  proved in Thm. \ref{thmnht}, is a   consequence of a Conjecture of Min-Wang \cite[Conj. 3.17]{MW} which in turn is proved in Thm. \ref{thmMWSen}. 
We   remark that the notion of \emph{nearly Hodge-Tate representations}, although not explicitly mentioned in   \cite{MW}, is first discovered by Min-Wang (as a consequence of their conjecture just mentioned); the author thanks them for informing him this notion and for allowing him to discuss it in this paper.


\item The techniques and ideas in this paper can be generalized to the relative case (i.e., for a smooth formal scheme over $\ok$), for example, we can prove \cite[Conj. 3.21]{MW22} in the recent preprint of Min-Wang, which is generalization of \cite[Conj. 3.17]{MW} to the relative case. Details will appear elsewhere.
 
\item Hodge-Tate prismatic crystals (in the relative case) are also studied in the recent preprints of Bhatt-Lurie \cite{BL1} and \cite{BL2}, using (the Hodge-Tate divisor of) the \emph{Cartier-Witt stack}; in particular, relation  with Sen theory is discussed in \cite[\S 3.9]{BL1}. The exact connection with our work   is yet to be discovered.
\end{enumerate}
\end{remark}

 \begin{rem} \label{remintro115}
 We make some more general remarks about the main theorem.
 \begin{enumerate}
 \item 
Recall $W\in \rep_\gk(C)$ is called \emph{almost Hodge-Tate} (cf. \cite{Fon04}) if its Sen weights are integers, and is  called \emph{Hodge-Tate} if  additionally the Sen operator is    semi-simple. 
Thus, the nearly Hodge-Tate representations are those \emph{near} to an almost Hodge-Tate representation. 
The similarities between the diagrams \eqref{diagbs21} and \eqref{diagnht} seem to suggest that the  nearly Hodge-Tate representations could be regarded as the ``crystalline objects" (whatever that means) in the category of $C$-representations; this analogy deserves further studies.


\item 
If one changes the two appearances of $\overline \calO_{\Prism}[\frac{1}{p}]$ in Diagram \eqref{diagnht} to a ``prismatic de Rham sheaf" (cf. \cite[Construction 6.4]{BSFcrystal} for a related discussion), it might be possible to introduce the ``nearly de Rham representations"; recall \emph{almost  de Rham representations} are discussed in \cite{Fon04}. 
Nonetheless, we expect this picture to be substantially more complicated: indeed, the Sen operator is only a linear operator, but the Sen-Fontaine operator in \cite{Fon04} is a \emph{differential} operator.
We hope to come back to this direction in the near future.


\item In \cite{MW}, they also give precise classification of the \emph{integral} Hodge-Tate prismatic crystals via (integral) stratifications, cf. \S \ref{s41ht}. It remains a curious question that if one can obtain an \emph{integral} version of diagram \eqref{diagnht}: namely, remove $1/p$ on the top, and change $C$ to $\O_C$ in the bottom. This  (at least) would entail a substantial (integrality) checking of many results, including those in the classical paper \cite{Sen80}. We do not pursue this here.

 \end{enumerate}
 \end{rem}

We now explain how to reduce the above ``prismatic" theorem to a more ``classical" problem close to the context in the study of (overconvergent) $(\varphi, \tau)$-modules. We first recall the   two (most) important prisms.

\begin{notation}\label{notaprism}
\begin{enumerate}
\item Let $\gs=W(k)[[u]]$, and equip it with a Frobenius $\varphi$ extending  the absolute Frobenius on $W(k)$ and such that $\varphi(u)=u^p$. Then $(\gs, (E(u)) \in \okprism$, and is called the Breuil-Kisin prism (associated to $\pi$).
\item Recall $C$ is the $p$-adic completion of $\overline K$, let $C^\flat$ be the tilt with ring of integers $\O_{C^\flat}$. Let $\ainf=W(\O_{C^\flat})$ be the ring of Witt vectors, equipped with the absolute Frobenius. There is a usual Fontaine's map $\theta: \ainf \to \O_C$ whose kernel  principal. Then $(\ainf, \ker\theta) \in \okprism$, and is called the Fontaine prism.
\item Let $\pi_0=\pi$, and for each $n \geq 1$, inductive choose some $\pi_n$ so that $\pi_n^p=\pi_{n-1}$. This compatible sequence defines an element $\underline{\pi} \in \ocflat$. We can define a morphism of prisms
$$(\gs, (E)) \to (\ainf, \ker\theta)$$
which is a $W(k)$-linear map sends $u$   to the Teichm\"uller lift $[\underline{\pi}]$.

\item We further introduce some field notations. Let $\mu_1$ be a primitive $p$-root of unity, and inductively, for each $n \geq 2$, choose $\mu_n$ a $p$-th root of $\mu_{n-1}$. Define the fields
$$K_{\infty}   = \cup _{n = 1} ^{\infty} K(\pi_n), \quad K_{p^\infty}=  \cup _{n=1}^\infty
K(\mu_{n}), \quad L =  \cup_{n = 1} ^{\infty} K(\pi_n, \mu_n).$$
Let $$G_{\kinfty}:= \gal (\overline K / K_{\infty}), \quad G_{\kpinfty}:= \gal (\overline K / K_{p^\infty}), \quad G_L: =\gal(\overline K/L).$$
Further define $\Gamma_K, \hat{G}$ as in the following diagram, where we let $\tau$ be a topological generator of $\gal(L/\kpinfty) \simeq \zp$, cf. Notation \ref{nota hatG} for more details.
\begin{equation}
\begin{tikzcd}
                                       & L                                                                                             &                             \\
\kpinfty \arrow[ru, "<\tau>", no head] &                                                                                               & \kinfty \arrow[lu, no head] \\
                                       & K \arrow[lu, "\Gamma_K", no head] \arrow[ru, no head] \arrow[uu, "\hat{G}"', no head, dashed] &                            
\end{tikzcd}
\end{equation}
\end{enumerate}
\end{notation}

Coming back to Thm. \ref{thmintro}, we focus on explaining the left vertical equivalence. In fact, the difficulty lies in constructing the functor; once constructed, one then uses results from  \cite{Sen80} to show it is an equivalence.
Indeed, let $\bM \in  \Vect((\calO_K)_{\Prism},\overline \calO_{\Prism}[\frac{1}{p}])$, and consider the  evaluation 
$$W:=\bM((\ainf, \ker\theta)) \in \rep_\gk(C);$$ it suffices to show that
  $W$ is nearly Hodge-Tate.
We now use the Breuil-Kisin prism $(\gs, (E))$, which is a cover of the final object in $\mathrm{Shv}(\okprism)$. Thus $\bM$ is completely determined by the evaluation 
$$M:=\bM((\gs, (E)))$$
 together with a \emph{stratification} (the idea is similar to classical crystalline crystal theory). Note
$$\baroprism[1/p]((\gs, (E))) =(\gs/E)[1/p]=K,$$
hence $M$ is nothing but a $K$-vector space.
In fact (cf. \S \ref{s41ht} for details), Min-Wang show: after choosing a basis $\underline{e}$ of $M$, a stratification can be written as
\begin{equation}  \label{eqanconverge}
 \varepsilon(\underline e) = \underline e\cdot \sum_{n\geq 0}A_n \frac{X^n}{n!},  
 \end{equation} 
 where $X$ is a variable, and each $A_n$  a matrix over $K$.
Furthermore, $A_0=I$ is identity and
\begin{equation}\label{eqanplus1converge}
A_{n+1} = \prod_{i=0}^n(iE'(\pi)+A_1)  \text{ for each $n \geq 0$, and $p$-adically converges to zero as $n \to \infty$,}
\end{equation}
where $E'(\pi)$ is the evaluation at $\pi$ of the $u$-derivative $E'(u)$.
One quickly notices \eqref{eqanplus1converge} is equivalent to say that the eigenvalues of $\frac{-A_1}{E'(\pi)}$ are exactly of the form as in \eqref{eqeigen}!
Hence to show $W$ is nearly Hodge-Tate, it would suffice to show the following,  which confirms the conjecture of Min-Wang  \cite[Conj. 3.17]{MW}.


\begin{thm} \label{thmintrosen} (cf. Thm. \ref{thmMWSen}.) 
The matrix $\frac{-A_1}{E'(\pi)}$  is  the matrix  of the Sen operator   for some basis of $W$.
\end{thm}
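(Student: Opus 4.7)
The plan is to recover the $\tau$-action on $W$ directly from the stratification \eqref{eqanconverge}, and then apply the Kummer-Sen theory developed in \S\ref{s12} to identify the infinitesimal generator of this action with the classical cyclotomic Sen operator, up to an explicit scalar that turns out to equal $-E'(\pi)$.

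I would begin by pulling the crystal along the map of prisms $\iota : (\gs, (E)) \to (\ainf, \ker\theta)$, $u \mapsto [\upi]$. The crystal property together with $\baroprism[1/p]((\ainf, \ker\theta)) = C$ gives $W \simeq M \otimes_K C$, so the basis $\underline{e}$ of $M$ descends to a $C$-basis of $W$. The $G_K$-action is recorded by descent: for $g \in G_K$, the two maps $\iota$ and $g \circ \iota$ differ precisely in where they send $u$, and the stratification $\varepsilon$ is exactly the data converting between the two. Specializing \eqref{eqanconverge} to $g = \tau$, which satisfies $\tau([\upi]) = [\ueps][\upi]$, and using the convergence \eqref{eqanplus1converge}, gives a closed formula for the $\tau$-action in the basis $\underline{e}$ in terms of $\log[\ueps] = t$.

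Next I would invoke the comparison theorem of the Kummer-Sen theory: for every $W \in \rep_C(\gk)$, the $\tau$-infinitesimal generator on the $\hatkinfty$-rational decompletion of $W$ matches the classical Sen operator up to a universal scalar that records the translation between the Kummer and cyclotomic towers. To compute this scalar in our setting, I would differentiate the identity $E([\upi]) \in \ker\theta$ in the $\tau$-direction: the first-order contribution modulo $(\ker\theta)^2$ reads $E'(\pi) \cdot \pi \cdot t \equiv 0$, and tracking this through the normalization of the Sen operator extracts the factor $-E'(\pi)$. Combining with the first-order expansion of \eqref{eqanconverge}, which contributes the matrix $A_1$, yields that the Sen operator equals $-A_1/E'(\pi)$ in the basis of $W$ descended from $\underline{e}$.

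The hard part is making the Kummer-Sen comparison precise. The stratification \eqref{eqanconverge} lives a priori over $K$, while the Sen operator is built on a $\hatkpinfty$-rational subspace of $W$; bridging the two requires showing that the formal series $\sum A_n X^n/n!$ specializes, when $X = t$, to an honest analytic operator on the locally analytic vectors of $W$, and that this operator agrees with the $\tau$-infinitesimal action coming from the Kummer tower. The convergence condition \eqref{eqanplus1converge} is tailored exactly to this purpose, but its use depends on the full machinery of \S\ref{s12} to connect the Kummer-Sen decompletion to classical Sen theory. Once that bridge is established, the identification $\Theta_{\mathrm{Sen}} = -A_1/E'(\pi)$ follows from the Leibniz computation above.
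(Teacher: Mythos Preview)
Your overall architecture matches the paper's: extract the explicit $\tau$-action on $W$ from the stratification (this is Prop.~\ref{representation comes from crystal}, quoted from Min--Wang), observe that the basis $\underline e$ therefore lands in the Kummer--Sen module $D_{\Sen,\kinfty}(W)$, compute $\nabla_\tau(\underline e)$, and then invoke the Kummer--Sen comparison (Thm.~\ref{thmkummersenop}) to conclude that the suitably normalized $\nabla_\tau$ is the classical Sen operator.

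There is, however, a concrete error in your specialization step. You write that the stratification variable $X$ specializes to $t=\log[\ueps]$, and later that one must make sense of $\sum A_nX^n/n!$ ``when $X=t$''. This cannot be right: the $\tau$-action you are computing lives on $W$, a $C$-vector space, and $\theta(t)=0$. The variable $X$ is the image of $\frac{u_0-u_1}{E(u_0)}$ in $\gs^1/E$; under $u_0\mapsto[\upi]$, $u_1\mapsto[\ueps][\upi]$ and reduction mod $\ker\theta$ it becomes
\[
\theta\Big(\frac{[\upi](1-[\ueps])}{E([\upi])}\Big)=\pi\cdot\theta(\xi/E)\cdot\theta(\varphi^{-1}(\mu))\cdot(-1)=\pi\,\lambda_{\mathrm{MW}}(1-\mu_p)\in\hat L^\times,
\]
exactly the constant appearing in \eqref{eqcocyU}. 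This is what makes $\underline e$ locally analytic for $\tau$ and gives $\nabla_\tau(\underline e)=\underline e\cdot\pi\lambda_{\mathrm{MW}}(1-\mu_p)A_1$ (up to the factor $2$ when $p=2$). Your heuristic ``differentiate $E([\upi])$ modulo $(\ker\theta)^2$'' is suggestive but does not by itself produce the normalizing scalar: what must actually be checked is the identity in $C$
\[
\frac{\pi\,\lambda_{\mathrm{MW}}(\mu_p-1)E'(\pi)}{p\,\theta(u\lambda'\fkt)}=1,
\]
which the paper verifies directly using $\theta(t/\mu)=1$, $\theta(\lambda')=\frac{E'(\pi)}{E(0)}\theta(\varphi(\lambda))$, and $p\lambda\fkt=t$. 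Once you replace ``$X=t$'' by the correct specialization above and carry out this scalar check, your argument becomes the paper's proof.
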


 
The proof of Thm. \ref{thmintrosen} uses yet another ``interpretation" of the Sen operator (in addition to stratifications discussed above).
Indeed, by taking $G_L$-invariants, the space $W^{G_L}$ becomes a $\hat{L}$-vector space with $\hat{G}$-action, where $\hat{L}$ is the $p$-adic completion of $L$. Min-Wang   explicitly compute this $\hat{G}$-action.    Using the basis $\underline{e}$ from \eqref{eqanconverge}, they compute the  $\tau$-action:
$$\tau^i (\underline e) = (\underline e) (1- i  \cdot \ast)^{-\frac{A_1}{E'(\pi)}}, \quad \forall i \in \zp,$$
where $\ast$ is some constant that we do not specify here.
Thus, one observes that if we ``take the log of $\tau$", we obtain (up to scaling) our target matrix $\frac{-A_1}{E'(\pi)}$! In other words, to prove Thm. \ref{thmintrosen}, it is equivalent to show that: \emph{``taking the log of $\tau$" gives rise to Sen operator}; this will be addressed in next subsection.

\subsection{Kummer tower and Sen theory} \label{s12}
In this subsection, we explain how to use techniques of locally analytic vectors ---developed in our study of overconvergent $(\varphi, \tau)$-modules --- to prove Thm. \ref{thmintrosen}. As we explained above, the goal, roughly speaking, is to show Kummer tower can be used to recover Sen theory, and in particular, ``$\log \tau$" can recover Sen operator.

Let us first quickly recall some ideas and results in \cite{GP} to set up the context. Recall in (algebraic) $p$-adic Hodge theory, we study representations of the Galois group $G_K$. A key idea   is to first restrict the representations to some subgroups of $G_K$. A most convenient subgroup is $G_\kpinfty$ since it is normal and the quotient group $\Gamma_K$ is a $1$-dimensional \emph{$p$-adic Lie group}, to which many techniques in $p$-adic analysis --- e.g. \emph{locally analytic vectors} (cf. Def. \ref{defLAV}) --- can be applied.

Indeed, for $W\in \rep_\gk(C)$, the classical Sen module, reviewed above Def. \ref{defnnht},  can be obtained via
\begin{equation}
D_{\Sen, \kpinfty}(W)=(W^{G_\kpinfty})^{\Gamma_K\dla}
\end{equation}
Here, the notation ``$\Gamma_K\dla$" denotes the subset of locally analytic vectors under the action of  the $p$-adic Lie group $\gammak$. Using this framework, the Sen operator is precisely the \emph{Lie algebra operator}. See \S \ref{subseccycSen} for more details.

For another example, for $V\in \rep_\gk(\qp)$, the famous theorem of Cherbonnier-Colmez \cite{CC98}  says that one can attach an \emph{overconvergent $(\varphi, \Gamma)$-module} to it. To save space, we recall the following (loose) formula which constructs the \emph{rigid-overconvergent} $(\varphi, \Gamma)$-module (cf. \S \ref{subsecwtbi} for Berger's well-known ring $\wtb_{\rig}^\dagger$):
\begin{equation}\label{eqkpinftyd}
D_{\rig, \kpinfty}^\dagger(V)  \approx \left((V\otimes_\qp \wtb_{\rig}^\dagger)^{G_\kpinfty}\right)^{\Gamma_K\dla}.
\end{equation}
 (For the interested reader, the correct formula  replaces left hand side by the union  $\cup_n \varphi^{-n}(D_{\rig, \kpinfty}^\dagger(V) )$, cf. \cite[Thm. 8.1]{Ber16};  applying Kedlaya's slope filtration theorem \cite{Ked05} to these objects then recovers Cherbonnier-Colmez's theorem).

Now, consider the subgroup $G_\kinfty$ of $G_K$ constructed using the Kummer tower. The field $\kinfty$ (similar to $\kpinfty$) is still an APF extension (cf. \cite{Win83}), hence Caruso \cite{Car13} shows one can   construct the \'etale $(\varphi, \tau)$-modules (similar to  the \'etale $(\varphi, \Gamma)$-modules) to classify Galois representations.
In two papers joint with Liu and Poyeton respectively, \cite{GLAMJ, GP}, we prove a conjecture of Caruso, which says that similar to Cherbonnier-Colmez's theorem, one can also attach \emph{overconvergent $(\varphi, \tau)$-modules} to $V\in \rep_\gk(\qp)$. The proof in \cite{GP} uses crucially the idea of locally analytic vectors, indeed, the main formula (again loosely written) is the following.
\begin{equation}\label{eqkinftyd}
D_{\rig, \kinfty}^\dagger(V) \approx \left((V\otimes_\qp \wtb_{\rig}^\dagger)^{G_L}\right)^{\gamma=1, \tau\dla}.
\end{equation}
(Similar comments below \eqref{eqkpinftyd} still apply, cf. \cite{GP} for full details.) 
Here $\gamma=1$ denotes the invariant under $\gal(L/\kinfty)$-action, and  $\tau\dla$ denotes the locally analytic vectors under the  $\gal(L/\kpinfty)$-action, cf. Notation \ref{notataula}.
A key difference from \eqref{eqkpinftyd} is that we \emph{can not} take $G_\kinfty$-invariants first, because $G_\kinfty \subset G_K$ is not normal. This fact complicates the computation for \eqref{eqkinftyd}. Nonetheless, we still manage to show it is a free module of full rank, and furthermore, there is a ``comparison"  \emph{identification} (cf. Convention \ref{conv:identification}):
\begin{equation}
D_{\rig, \kinfty}^\dagger(V) \otimes \left((  \wtb_{\rig}^\dagger)^{G_L}\right)^{\hat{G}\dla}
= D_{\rig, \kpinfty}^\dagger(V) \otimes \left((  \wtb_{\rig}^\dagger)^{G_L}\right)^{\hat{G}\dla}.
\end{equation}

Now, let us come back to our objective to construct a Sen theory over the Kummer tower. It turns out the formulae are in parallel with those in the study of overconvergent $(\varphi, \tau)$-modules.

\begin{theorem} (cf. Thm. \ref{thm331kummersenmod}.)
For $W\in \rep_\gk(C)$, define
 \begin{equation}
 D_{\Sen, \kinfty}(W):= (W^{G_L})^{\gamma=1,\tau\dla}.
 \end{equation}
Then it is a $\kinfty$-vector space of   dimension equal to $\dim_C W$. In addition, there is an identification:
\begin{equation} \label{eqcompasen}
 D_{\Sen, \kinfty}(W) \otimes_\kinfty (\hat{L})^{\hat{G}\dla} = D_{\Sen, \kpinfty}(W) \otimes_\kpinfty (\hat{L})^{\hat{G}\dla} .
 \end{equation}
\end{theorem}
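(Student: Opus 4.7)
The plan is to adapt to the Sen level the construction of overconvergent $(\varphi, \tau)$-modules via locally analytic vectors from \cite{GP}, whose principal formula is \eqref{eqkinftyd}. The conceptual obstacle is identical: $G_\kinfty \subset G_K$ is not normal, so there is no honest ``$G_\kinfty$-invariants'' functor producing $\kinfty$-rational objects from $W$; one must first pass to the normal subgroup $G_L$ and then carry out descent along the two-dimensional $p$-adic Lie group $\hat{G} = \gal(L/K)$ using the locally-analytic-vector formalism.

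Concretely, I would first observe that by Ax-Sen-Tate, $W^{G_L}$ is a finite free $\hat{L}$-module of rank $d := \dim_C W$ carrying a continuous semi-linear $\hat{G}$-action. Then I would construct the ``total'' Sen module
$$D_{\Sen}^{\hat{G}\dla}(W) := \bigl(W^{G_L}\bigr)^{\hat{G}\dla}$$
and prove it is finite free of rank $d$ over the coefficient ring $(\hat{L})^{\hat{G}\dla}$. This is a Tate-Sen style descent argument carried out for $\hat{G}$ rather than just $\Gamma_K$: one produces an ``approximately locally analytic'' basis and then sharpens it by a contracting-operator argument using generators in the $\gamma$- and $\tau$-directions of $\hat{G}$. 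The key technical inputs are the structure of $\hat{L}$-locally-analytic vectors and its subrings $(\hatkpinfty)^{\Gamma_K\dla} = \kpinfty$ and the Kummer analogue over $\kinfty$, all established in the earlier sections of the paper.

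Once $D_{\Sen}^{\hat{G}\dla}(W)$ is in hand, descent along each tower is formal. Taking $\tau=1$ recovers the classical cyclotomic Sen module $D_{\Sen, \kpinfty}(W) = (W^{G_\kpinfty})^{\Gamma_K\dla}$, because $(W^{G_L})^{\tau=1} = W^{G_\kpinfty}$ and what remains of the $\hat{G}$-analytic structure is precisely the $\Gamma_K\dla$-part. Symmetrically, imposing $\gamma=1$ together with the residual $\tau$-analytic structure yields $D_{\Sen, \kinfty}(W)$ by definition, and both objects inherit rank $d$ over $\kpinfty$ and $\kinfty$ respectively from the freeness of $D_{\Sen}^{\hat{G}\dla}(W)$ over $(\hat{L})^{\hat{G}\dla}$. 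The comparison identity \eqref{eqcompasen} then drops out as a tautology: both sides agree after faithfully flat base change up to $(\hat{L})^{\hat{G}\dla}$, since both equal $D_{\Sen}^{\hat{G}\dla}(W)$.

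The main obstacle is the freeness assertion for $D_{\Sen}^{\hat{G}\dla}(W)$ over $(\hat{L})^{\hat{G}\dla}$ of the expected rank. This is the Sen-level analogue of the central theorem of \cite{GP} underlying \eqref{eqkinftyd}, and amounts to verifying the Tate-Sen axioms for the $\hat{G}$-action on $W^{G_L}$, in particular constructing a suitable normalized trace in the $\tau$-direction compatible with the one in the $\gamma$-direction coming from classical Sen theory. The coefficients here ($\hat{L}$ rather than $\wtb_{\rig}^\dagger$) are much coarser than in the overconvergent setting, so the argument should be considerably simpler; still, this is where the non-normality of $G_\kinfty$ bites and where the bulk of the technical work lies. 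Granted this, dimension, descent and the comparison assemble formally.
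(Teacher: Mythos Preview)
Your overall architecture---pass to $W^{G_L}$, form $(W^{G_L})^{\hat{G}\dla}$, and descend along each tower---matches the paper. But you have mislocated the technical work, and this produces a genuine gap.

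You identify the main obstacle as the freeness of $(W^{G_L})^{\hat{G}\dla}$ over $(\hat{L})^{\hat{G}\dla}$, and propose to attack it by verifying Tate--Sen axioms for $\hat{G}$, in particular building a normalized trace in the $\tau$-direction. The paper does not do this, and does not need to: the freeness comes essentially for free from \emph{classical} cyclotomic Sen theory. Since $D_{\Sen,\kpinfty}(W)\subset W^{G_L}$ consists of $\Gamma_K$-locally-analytic vectors which are in addition fixed by $\tau$, this is a basis of $\hat{G}$-locally-analytic vectors, and a general lemma (\cite[Prop.~3.1.6]{GP}) then yields
\[
(W^{G_L})^{\hat{G}\dla} = D_{\Sen,\kpinfty}(W)\otimes_{\kpinfty}(\hat{L})^{\hat{G}\dla}
\]
immediately. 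No $\tau$-trace is needed.

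Conversely, the step you declare ``formal''---passing from $(W^{G_L})^{\hat{G}\dla}$ to its $\gamma=1$ part and concluding it is free of rank $d$ over $\kinfty$---is precisely where the content lies. The extension $(\hat{L})^{\hat{G}\dla}/\kinfty$ is neither finite nor complete, so neither Galois descent nor a decompletion argument applies directly. The paper breaks this into two pieces: first a \emph{monodromy descent}, showing that $M^{\nabla_\gamma=0}$ is free of rank $d$ over $L$ for any such $M$, by explicitly solving the differential equation $\partial_\gamma(H)+D_\gamma H=0$ via a Taylor series in the ``variable'' $\theta(\mathfrak{t})-\beta_n$ (this is exactly where the explicit description of $(\hat{L})^{\hat{G}\dla}$ from Prop.~\ref{loc ana in L new} is used); second an \'etale descent from $L$ down to $\kinfty$, which \emph{is} formal since $L/\kinfty$ is a union of finite Galois extensions. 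Your proposal omits the monodromy step entirely.
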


Let us mention that a key ingredient in the proof of above theorem is an explicit description of the set of locally analytic vectors $(\hat{L})^{\hat{G}\dla}$: one such description is already obtained in \cite{BC16};  we will obtain an alternative description which is convenient for us, cf. Prop. \ref{loc ana in L new}.
Our task does not stop here yet. Recall our objective, Thm. \ref{thmintrosen}, is to construct a  Sen theory over the Kummer tower which (up to conjugation) should be ``the same" as the cyclotomic-Sen theory.
Note since the $\kpinfty$-Sen module $D_{\Sen, \kpinfty}(W)$ (resp. the $\kinfty$-Sen module $D_{\Sen, \kinfty}(W)$) is defined via locally analytic vectors, we can define the \emph{Lie algebra operator}  $\nabla_\gamma$ associated to $\Gamma_K$-action (resp. $\nabla_\tau$ associated to $\tau$-action).
The following lemma is crucial, and follows from a   theorem of Berger-Colmez \cite{BC16}.

\begin{lemma} \label{lemlindep} (cf. Cor. \ref{corkill})
The two Lie algebra operators $\nabla_\gamma$ and  $\nabla_\tau$ are \emph{linearly dependent} over  $(\hat{L})^{\hat{G}\dla}$.
\end{lemma}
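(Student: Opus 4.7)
The plan is to establish the linear dependence by combining the explicit description of $(\hat L)^{\hat G\dla}$ obtained in Proposition \ref{loc ana in L new} with a direct Lie-algebra calculation on a single Kummer generator.

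First I would recall that $\hat G = \gal(L/K)$ is a $2$-dimensional $p$-adic Lie group whose Lie algebra admits $\{\nabla_\gamma, \nabla_\tau\}$ as a $\qp$-basis; both operators act as continuous $K$-derivations on any $\hat G$-locally-analytic representation, and in particular preserve $(\hat L)^{\hat G\dla}$. The statement is therefore equivalent to showing that the $(\hat L)^{\hat G\dla}$-submodule of the continuous derivations of $(\hat L)^{\hat G\dla}$ spanned by $\nabla_\gamma$ and $\nabla_\tau$ has rank at most one.

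Second I would invoke the explicit description of $(\hat L)^{\hat G\dla}$ furnished by Proposition \ref{loc ana in L new}, which refines the computation of \cite{BC16}: this ring is obtained from the cyclotomic locally analytic ring $(\widehat{K_{p^\infty}})^{\Gamma_K\dla}$ by adjoining a single ``Kummer-type'' generator $b_\tau$ coming from the tower $\kinfty/K$. Since $(\hat L)^{\hat G\dla}$ is in this sense $1$-dimensional over the cyclotomic locally analytic subring, any continuous $K$-derivation of $(\hat L)^{\hat G\dla}$ is determined by its image on $b_\tau$ (modulo a common $\hat G$-invariant subring on which both $\nabla_\gamma$ and $\nabla_\tau$ vanish), so the relevant space of derivations has rank one over $(\hat L)^{\hat G\dla}$.

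Third, using the explicit actions $\tau([\underline \pi]) = [\underline\varepsilon][\underline\pi]$ and $\gamma([\underline\varepsilon])=[\underline\varepsilon]^{\chi(\gamma)}$, a short differentiation gives explicit values $\alpha := \nabla_\gamma(b_\tau)$ and $\beta := \nabla_\tau(b_\tau)$, both lying inside $(\hat L)^{\hat G\dla}$. The rank-one bound from the previous step then forces the relation $\beta\cdot \nabla_\gamma - \alpha\cdot \nabla_\tau = 0$ as continuous derivations of $(\hat L)^{\hat G\dla}$, producing the desired nontrivial linear dependence.

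The main obstacle is the second step: one must pin down $(\hat L)^{\hat G\dla}$ sharply enough to ensure that the proportionality coefficient lives inside this ring rather than a larger completion. This is exactly what the analytic estimates of Berger-Colmez \cite{BC16} achieve, and is built into the statement of Proposition \ref{loc ana in L new}; once available, the rank-one rigidity of the derivation module reduces the lemma to the elementary observation that any two elements of a locally free module of rank one are linearly dependent.
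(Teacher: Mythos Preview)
Your strategy is different from the paper's and, once the details are corrected, would work. The paper's proof (Corollary \ref{corkill}) does not argue via the explicit structure of $(\hat L)^{\hat G\dla}$ at all; it simply invokes the Berger--Colmez theorem \cite[Thm.~6.1, Prop.~6.3]{BC16}, which for any $p$-adic Lie extension $\wt K/K$ produces a nonzero element of $\O_{\wh{\wt K}(\mu_{p^m})}\otimes_{\zp}\Lie(\gal(\wt K/K))$ annihilating all locally analytic vectors in $\wh{\wt K}$. Applied with $\wt K=L$ this is exactly the claimed linear dependence, and the explicit coefficient is then read off by evaluating both operators on the single element $\theta(\fkt)$. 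Your rank-one derivation argument trades that black box for the structural description of $(\hat L)^{\hat G\dla}$ together with a Leibniz computation; this is a legitimate alternative route, and arguably more transparent.

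However, your account of Proposition \ref{loc ana in L new} is inaccurate on two points that must be fixed for the argument to go through. First, the coefficient ring there is not the cyclotomic locally analytic ring $(\hat K_{p^\infty})^{\Gamma_K\dla}=K_{p^\infty}$ but (a union of finite pieces of) $L=\cup_n K(\mu_n,\pi_n)$; the Kummer elements $\pi_n$ already sit in the base. Second, the distinguished generator in that proposition is $\beta=\theta(\fkt)$, which is not ``Kummer-type'' at all (the Kummer-type generator $\alpha$ is the one in Proposition \ref{loc ana in L}). What makes your rank-one idea survive these corrections is that every element of $L$ has finite $\hat G$-orbit and is therefore annihilated by both $\nabla_\gamma$ and $\nabla_\tau$; hence each operator, as a continuous derivation on $L\dacc{\beta-\beta_n}_n$, is determined by its value on the single transcendental generator $\beta$. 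Since $\nabla_\gamma(\beta)=\beta\neq 0$, the relation $\nabla_\tau(\beta)\cdot\nabla_\gamma-\nabla_\gamma(\beta)\cdot\nabla_\tau=0$ is nontrivial and has coefficients in $(\hat L)^{\hat G\dla}$, as required.
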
  

The above lemma implies that,  roughly speaking, the two operators  $\nabla_\gamma$ and  $\nabla_\tau$ (up to scaling) acting on the comparison isomorphism \eqref{eqcompasen}, are indeed a \emph{same}  operator acting on two different basis of a common vector space! 
Since the  $\nabla_\gamma$-operator on $D_{\Sen, \kpinfty}(W)$ is precisely the Sen operator, thus so is  $\nabla_\tau$ up to scaling!
This finally achieves our desired goal.
In precise words, we obtain the following.

\begin{thm} (cf. Thm. \ref{thmkummersenop}.)
Let $W \in \rep_\gk(C)$, then we can construct a $\kinfty$-linear \emph{Sen operator over the Kummer tower}
\begin{equation}\label{eqnnablanorm}
 \frac{\nabla_\tau}{\ast} : D_{\Sen, \kinfty}(W) \to D_{\Sen, \kinfty}(W),
\end{equation}
where $\ast \in (\hat{L})^{\hat G \dla}$ is some ``normalizing constant" that we do not specify here.
This operator, after $C$-linearly extending to $\frac{\nabla_\tau}{\ast}: W \to W$, is the \emph{same} as the $C$-linear extension of classical Sen operator; hence in particular, they have the same characteristic polynomial and the same semi-simplicity property. 
\end{thm}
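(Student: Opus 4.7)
The plan is to define the Kummer-Sen operator as the Lie-algebra operator coming from the $\tau$-action on $D_{\Sen,\kinfty}(W)$, and then use the comparison identification \eqref{eqcompasen} together with Lemma \ref{lemlindep} (Corollary \ref{corkill}) to identify it, after a suitable scalar rescaling, with the classical Sen operator on the cyclotomic side.

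First, since every element of $D_{\Sen,\kinfty}(W) = (W^{G_L})^{\gamma=1,\tau\dla}$ is by construction $\tau$-locally analytic, the Lie-algebra operator $\nabla_\tau$ is defined on it in the standard way. A direct check shows that $\nabla_\tau$ commutes with the $G_L$- and $\gamma$-actions and preserves the subspace of $\tau$-locally analytic vectors, so it restricts to a well-defined operator on $D_{\Sen,\kinfty}(W)$; moreover $\nabla_\tau$ is $\kinfty$-linear, because $\tau$ acts on $\pi_n$ via the $p^n$-th root of unity $\mu_n$, so $\tau^{p^n}$ fixes $\pi_n$ and hence $\nabla_\tau|_{\kinfty} = 0$. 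Extending scalars to $(\hat L)^{\hat G\dla}$ yields an operator on $D_{\Sen,\kinfty}(W) \otimes_\kinfty (\hat L)^{\hat G\dla}$, which under \eqref{eqcompasen} is identified with $D_{\Sen,\kpinfty}(W) \otimes_\kpinfty (\hat L)^{\hat G\dla}$; the latter carries the operator $\nabla_\gamma$ coming from the cyclotomic side.

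The decisive step is Lemma \ref{lemlindep}: the two operators $\nabla_\tau$ and $\nabla_\gamma$ are linearly dependent over $(\hat L)^{\hat G\dla}$, so there exists a single element $\ast \in (\hat L)^{\hat G\dla}$ with $\nabla_\tau = \ast\cdot \nabla_\gamma$ on the common extended module. Define the Kummer-Sen operator to be $\frac{\nabla_\tau}{\ast}$; by construction it agrees with $\nabla_\gamma$ under \eqref{eqcompasen}, which is precisely the classical Sen operator on $D_{\Sen,\kpinfty}(W)$. Tensoring $C$-linearly to $W = D_{\Sen,\kinfty}(W)\otimes_\kinfty C = D_{\Sen,\kpinfty}(W)\otimes_\kpinfty C$, the two operators induce the same $C$-linear endomorphism of $W$, and therefore share characteristic polynomial and semi-simplicity.

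The main conceptual obstacle is Lemma \ref{lemlindep}, which underpins the entire construction and rests on Berger-Colmez's analytic description of $(\hat L)^{\hat G\dla}$; granting it, together with \eqref{eqcompasen}, the remaining arguments are essentially formal manipulations of Lie-algebra operators and locally analytic vectors.
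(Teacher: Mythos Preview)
Your overall strategy matches the paper's, but two steps are mis-stated in a way that makes the argument fail as written.

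First, the claim that $\nabla_\tau$ commutes with the $\gamma$-action is false: since $\gamma\tau\gamma^{-1}=\tau^{\chi_p(\gamma)}$, one has $\gamma\nabla_\tau\gamma^{-1}=\chi_p(\gamma)\nabla_\tau$, so for $m\in D_{\Sen,\kinfty}(W)$ one gets $\gamma(\nabla_\tau(m))=\chi_p(\gamma)\nabla_\tau(m)$, which is not $\gamma$-fixed. Thus $\nabla_\tau$ by itself does \emph{not} preserve $D_{\Sen,\kinfty}(W)$. The paper fixes this by passing to the normalized operator $N_\nabla=\frac{1}{p\theta(\mathfrak t)}\nabla_\tau$ (Definition~\ref{defnablaforL}): because $\gamma(\theta(\mathfrak t))=\chi_p(\gamma)\theta(\mathfrak t)$, the two $\chi_p(\gamma)$-twists cancel and $N_\nabla$ genuinely commutes with $\gal(L/\kinfty)$. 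The further division by $\theta(u\lambda')\in\kinfty$ is then harmless. So the normalizing constant $\ast$ is not merely a scalar to match conventions; part of it is forced to make the operator land back in $D_{\Sen,\kinfty}(W)$.

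Second, your reading of Lemma~\ref{lemlindep} as ``$\nabla_\tau=\ast\cdot\nabla_\gamma$ on the extended module'' is not what Corollary~\ref{corkill} says, and in fact that equality is false (both sides are derivations, not linear maps, and they disagree on $m'\otimes r$ whenever $\nabla_\gamma(m')\neq 0$). What Corollary~\ref{corkill} gives is a combination $\mathfrak a=\nabla_\gamma+\frac{1}{\theta(u\lambda')}N_\nabla$ that \emph{kills the ring} $(\hat L)^{\hat G\dla}$ and is therefore $(\hat L)^{\hat G\dla}$-\emph{linear} on the common module in \eqref{eqcompasen}. The paper then observes that $\nabla_\tau$ vanishes on $D_{\Sen,\kpinfty}(W)$ (since any lift of $\tau$ lies in $G_{\kpinfty}$) and $\nabla_\gamma$ vanishes on $D_{\Sen,\kinfty}(W)$ (by the $\gamma=1$ condition); hence the single linear operator $\mathfrak a$ restricts to $\nabla_\gamma$ on one basis and to $\frac{1}{\theta(u\lambda')}N_\nabla$ on the other. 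This is the mechanism that identifies the two Sen operators after $C$-linear extension, and it is a bit more subtle than scalar proportionality of $\nabla_\tau$ and $\nabla_\gamma$.
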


\begin{rem}
For the overconvergent $(\varphi, \Gamma)$-module \eqref{eqkpinftyd}, there is also a differential operator $\nabla_\gamma$ defined in \cite{Ber02}. A  differential operator   $N_\nabla$ (cf. \ref{defndiffwtb}) for the  overconvergent $(\varphi, \tau)$-modules is defined in \cite{Gaojems}, which roughly speaking is a  {normalization} of $\nabla_\tau$: this operator   plays a significant role (that can not be obtained using only $(\varphi, \Gamma)$-module theory) in  \cite{Gaojems} to develop  \emph{integral} $p$-adic Hodge theory. The above discussions, particularly Lem. \ref{lemlindep} seem to suggest that one might discover more connections between these two operators; this could be useful for integral $p$-adic Hodge theory.
\end{rem}

\subsection{Structure of the paper.}
In \S \ref{seclav}, we review some results from \cite{GP, Gaojems} on locally analytic vectors; in particular, we review some differential operators defined in \emph{loc. cit.}.
In \S \ref{seckummersen}, we construct  Sen theory over the Kummer tower, using  techniques reviewed in \S \ref{seclav}.
In \S \ref{sHT}, we use Sen theory over the Kummer tower to study Hodge-Tate prismatic crystals; we prove the conjecture of Min-Wang, and establish the link with nearly Hodge-Tate representations.

\subsection{Notations and conventions}

\begin{Notation} \label{nota hatG}
Note $\hat{G}=\gal(L/K)$ is a $p$-adic Lie group of dimension 2. We recall the structure of this group in the following.
\begin{enumerate}
\item Recall that:
\begin{itemize}
\item if $K_{\infty} \cap K_{p^\infty}=K$ (always valid when $p>2$, cf. \cite[Lem. 5.1.2]{Liu08}), then $\gal(L/K_{p^\infty})$ and $\gal(L/K_{\infty})$ generate $\hat{G}$;
\item if $K_{\infty} \cap K_{p^\infty} \supsetneq K$, then necessarily $p=2$, and $K_{\infty} \cap K_{p^\infty}=K(\pi_1)$ (cf. \cite[Prop. 4.1.5]{Liu10}) and $\pm i \notin K(\pi_1)$, and hence $\gal(L/K_{p^\infty})$ and $\gal(L/K_{\infty})$   generate an open subgroup  of $\hat{G}$ of index $2$.
\end{itemize}

\item Note that:
\begin{itemize}
\item $\gal(L/K_{p^\infty}) \simeq \Zp$, and let
$\tau \in \gal(L/K_{p^\infty})$ be \emph{the} topological generator such that
\begin{equation} \label{eq1tau}
\begin{cases} 
\tau(\pi_i)=\pi_i\mu_i, \forall i \geq 1, &  \text{if }  \Kinfty \cap \Kpinfty=K; \\
\tau(\pi_i)=\pi_i\mu_{i-1}=\pi_i\mu_{i}^2, \forall i \geq 2, & \text{if }  \Kinfty \cap \Kpinfty=K(\pi_1).
\end{cases}
\end{equation}
 
\item $\gal(L/K_{\infty})$ ($\subset \gal(K_{p^\infty}/K) \subset \Zp^\times$) is not necessarily pro-cyclic when $p=2$; however, this issue will \emph{never} bother us.
\end{itemize}
\item (This will only be briefly used in \S \ref{seclavL}).
If we let $\Delta \subset \gal(L/K_{\infty})$ be the torsion subgroup, then $\gal(L/K_{\infty})/\Delta$  is pro-cyclic; choose $\gamma' \in \gal(L/K_{\infty})$ such that its image in $\gal(L/K_{\infty})/\Delta$ is a topological generator. 
Let $\hat{G}_n \subset \hat{G}$ be the subgroup topologically generated by $\tau^{p^n}$ and $(\gamma')^{p^n}$.
\end{enumerate}
\end{Notation}

\begin{convention}
In our set-up, the Hodge-Tate-Sen weight  of   the cyclotomic character is $1$, which is the same as in \cite{Sen80} and \cite{MW}, but is opposite to that in \cite{Gaojems}. 
\end{convention}

 \begin{defn}
 Suppose $\mathcal G$ is a topological group that acts continuously on a topological ring $R$. We use $\rep_{\mathcal G}(R)$ to denote the category where an object is a finite free $R$-module $M$ (topologized via the topology on $R$) with a continuous and \emph{semi-linear} $\mathcal G$-action in the usual sense that
$$g(rx)=g(r)g(x), \forall g\in \mathcal G, r \in R, x\in M.$$
(The only case in this paper where the action is \emph{linear} is when $R=\zp$).
 \end{defn}
Examples of the above definition include $\rep_C(\gk)$ (already mentioned in the introduction), as well as several categories in \S \ref{seckummersen}.

\begin{convention}\label{conv:identification}
Let $M$ be a $A$-module where $A$ is a ring.   Let $B\subset A$ be a subring, and let $N \subset M$ be a sub-$B$-module.
If the natural map $N\otimes_B A \to M$ is an isomorphism, then we call it an \emph{identification}, and simply write
\[ N\otimes_B A \xrightarrow{\simeq} M \]
or just
\[N\otimes_B A = M\]
\end{convention}


\subsection{Acknowledgement} 
I thank 
Heng Du,
Tong Liu,
 Zeyu Liu, 
 Yu Min,
and  Yupeng Wang 
for useful discussions and correspondences. 
Special thanks to Yupeng Wang for answering my many questions and  for several useful suggestions.
The author is partially supported by the National Natural Science Foundation of China under agreement No. NSFC-12071201.

\section{Locally analytic vectors}\label{seclav}
 
 In this section, we review some results from \cite{GP, Gaojems}. We recall the notion of locally analytic vectors, and define some differential operators; these operators will be  \emph{specialized} to the Sen theory setting in \S \ref{seckummersen} and will be crucially used there.

\subsection{Locally analytic vectors}

Let us very quickly recall the theory of locally analytic vectors, see \cite[\S 2.1]{BC16} and \cite[\S 2]{Ber16} for more details.  
Recall the multi-index notations: if $\cbf = (c_1, \hdots,c_d)$ and $\kbf = (k_1,\hdots,k_d) \in \mathbb{N}^d$ (here $\mathbb{N}=\mathbb{Z}^{\geq 0}$), then we let $\cbf^\kbf = c_1^{k_1} \cdot \ldots \cdot c_d^{k_d}$. Recall that a $\Qp$-Banach space $W$ is a $\Qp$-vector space with a complete non-Archimedean  norm $\|\cdot\|$ such that $\|aw\|=\|a\|_p\|w\|, \forall a \in \Qp, w \in W$, where $\|a\|_p$ is the   $p$-adic norm on $\Qp$.

\begin{defn}\label{defLAV}
\begin{enumerate}
\item 
Let $G$ be a $p$-adic Lie group, and let $(W, \|\cdot \|)$ be a $\Qp$-Banach representation of $G$.
Let $H$ be an open subgroup of $G$ such that there exist coordinates $c_1,\hdots,c_d : H \to \Zp$ giving rise to an analytic bijection $\cbf : H \to \Zp^d$.
 We say that an element $w \in W$ is an $H$-analytic vector if there exists a sequence $\{w_\kbf\}_{\kbf \in \mathbb{N}^d}$ with $w_\kbf \to 0$ in $W$, such that $$g(w) = \sum_{\kbf \in \mathbb{N}^d} \cbf(g)^\kbf w_\kbf, \quad \forall g \in H.$$
Let $W^{H\dan}$ denote the space of $H$-analytic vectors.

\item $W^{H\dan}$ injects into $\mathcal{C}^{\an}(H, W)$ (the space of analytic functions on $H$ valued in $W$), and we endow it with the induced norm, which we denote as $\|\cdot\|_H$. We have $\|w\|_H=\sup_{\kbf \in \mathbb{N}^d}\|w_{\kbf}\|$, and $W^{H\dan}$ is a Banach space.

\item 
We say that a vector $w \in W$ is \emph{locally analytic} if there exists an open subgroup $H$ as above such that $w \in W^{H\dan}$. Let $W^{G\dla}$ denote the space of such vectors. We have $W^{G\dla} = \cup_{H} W^{H\dan}$ where $H$ runs through  open subgroups of $G$. We can endow $W^{\la}$ with the inductive limit topology, so that $W^{\la}$ is an LB space.

\item We can naturally extend these definitions to the case when $W$ is a Fr\'echet- or LF- representation of $G$, and use $W^{G\dpa}$ to denote the \emph{pro-analytic} vectors, cf. \cite[\S 2]{Ber16}.
\end{enumerate}
\end{defn}

\begin{Notation} \label{notataula}
We set up some notations with respect to representations of $\hat{G}$ (cf. Notation \ref{nota hatG}).
\begin{enumerate}
\item Given a $\hat{G}$-representation $W$, we use
$$W^{\tau=1}, \quad W^{\gamma=1}$$
to mean $$ W^{\gal(L/K_{p^\infty})=1}, \quad
W^{\gal(L/K_{\infty})=1}.$$
And we use
$$
W^{\tau\dla},   \quad  W^{\gamma\dla} $$
to mean
$$
W^{\gal(L/K_{p^\infty})\dla}, \quad  
W^{\gal(L/K_{\infty})\dla}.  $$

\item  Let
$W^{\tau\dla, \gamma=1}:= W^{\tau\dla} \cap W^{\gamma=1},$
then by \cite[Lem. 3.2.4]{GP}
$$ W^{\tau\dla, \gamma=1} \subset  W^{\hat{G}\dla}. $$
\end{enumerate}
\end{Notation}

\begin{rem}
Note that we never define $\gamma$ to be an element of $\gal(L/K_\infty)$; although when $p>2$ (or in general, when $\gal(L/K_\infty)$ is pro-cyclic), we could have defined it as a topological generator of $\gal(L/K_\infty)$. In particular, although ``$\gamma=1$" might be slightly ambiguous (but only when $p=2$), we use the notation for brevity.
\end{rem}

\begin{notation} \label{notalieg}
For $g\in \hat{G}$, let $\log g$ denote  the (formally written) series $(-1)\cdot \sum_{k \geq 1} (1-g)^k/k$. Given a $\hat{G}$-locally analytic representation $W$, the following two Lie-algebra operators (acting on $W$) are well defined:
\begin{itemize}
\item  for $g\in \gal(L/\kinfty)$ enough close to 1, one can define $\nabla_\gamma := \frac{\log g}{\log(\chi_p(g))}$;
\item for $n \gg 0$ hence $\tau^{p^n}$ enough close to 1, one can define $\nabla_\tau :=\frac{\log(\tau^{p^n})}{p^n}$.
\end{itemize}
Clearly, these two Lie-algebra operators form a $\zp$-basis of $\Lie(\hat{G}$).
\end{notation}

\subsection{Locally analytic vectors in $\wtb^I$}\label{subsecwtbi}

We briefly recall the rings $\wt{\mathbf{A}}^{I}$ and  $\wt{\mathbf{B}}^{I}$, see \cite[\S 2]{GP} for detailed discussions, also see \cite[\S 2]{Gaojems} for a faster summary.
 
Recall in Notation \ref{notaprism}, using the sequence $\pi_n$, we defined an embedding $\gs \into \ainf$, and henceforth, we  identify  $u$  with the element $[\underline{\pi}] \in \ainf$.
For $n \geq 0$, let $r_n: =(p-1)p^{n-1}$.
Let $\wt{\mathbf{A}}^{[r_\ell, r_k]}$ be the $p$-adic completion of $ \wt{\mathbf{A}}^+ [\frac{p}{u^{ep^\ell}} , \frac{u^{ep^k}}{p}]$, and let 
$$\wt{\mathbf{B}}^{[r_\ell, r_k]}: =\wt{\mathbf{A}}^{[r_\ell, r_k]}[1/p].$$
These spaces are equipped with $p$-adic topology. When $I  \subset J$ are two closed intervals as above, then by \cite[Lem. 2.5]{Ber02}, there exists a natural (continuous) embedding
$\wt{\mathbf{B}}^{J}   \hookrightarrow  \wt{\mathbf{B}}^{I}$. Hence we can define the nested intersection
$$\wt{\mathbf{B}}^{[r_\ell, +\infty)}: = \bigcap_{k \geq \ell} \wt{\mathbf{B}}^{[r_\ell, r_k]},$$
and equip it with the natural Fr\'echet  topology.  Finally, let 
$$\wt{\mathbf{B}}_{  \rig}^{\dagger}: = \bigcup_{n \geq 0} \wt{\mathbf{B}}^{[r_n, +\infty)},$$
 which is a LF space.

\begin{convention}
When $Y$ is a ring with a $G_K$-action, $X \subset \overline{K}$ is a subfield, we use $Y_X$ to denote the $\gal(\overline{K}/X)$-invariants of  $Y$.   Examples include when $Y=\wt{\mathbf{A}}^{I}, \wt{\mathbf{B}}^{I}$ and $X=L, K_\infty$. This ``style of notation" imitates that of \cite{Ber02}, which uses the subscript $\ast_{K}$ to denote $G_{\kpinfty}$-invariants.
\end{convention}

\begin{defn} \label{defnfkt}
(cf. \cite[\S 5.1]{GP} for full details).
The compatible sequence $(1, \mu_1, \mu_2, \cdots)$ in Notation \ref{notaprism} defines an element $\underline \varepsilon \in \ocflat$. Let $[\underline \varepsilon] \in \ainf$ be its Techm\"uller lift.
Let $t=\log([\underline \varepsilon])  \in \bcrisplus$ be the usual element, where $\bcrisplus$ is the usual crystalline period ring.
Define the element
\[
\lambda :=\prod_{n \geq 0} (\varphi^n(\frac{E(u)}{E(0)}))  \in \bcrisplus.\]
Define
$$ \mathfrak{t} = \frac{t}{p\lambda},$$
then it turns out $\mathfrak{t} \in \ainf$.
  \end{defn}

\begin{lemma} \label{lem b}
\cite[Lem. 5.1.1]{GP}
There exists some $n=n(\fkt) \geq 0$, such that $\mathfrak{t}, 1/\mathfrak{t} \in 
  \wt{\mathbf{B}}^{[r_n, +\infty)}$. In addition, $\mathfrak{t}, 1/\mathfrak{t} \in
 (\wt{\mathbf{B}}^{[r_n, +\infty)}_{ L})^{\hat{G}\dpa}$.
\end{lemma}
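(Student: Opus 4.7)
The plan is to handle the two elements $t$ and $\lambda$ separately, check that the singularities of $t$ (along the kernel of $\theta \circ \varphi^{-m}$ for various $m$) are cancelled exactly by the zeros of $\lambda$, and then invoke known pro-analyticity of each building block.

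First I would verify that $t \in \wtb^{[r_n,+\infty)}_L$ for all sufficiently large $n$. This is standard: $t = \log([\ueps])$ and the logarithm series for $\log([\ueps]) = \log(1+([\ueps]-1))$ converges in $\wtb^{[r_n,+\infty)}$ as soon as the radius of the annulus is small enough that $[\ueps]-1$ becomes topologically nilpotent. The factors $\varphi^m(E(u)/E(0))$ are units in $\wtb^+$ away from the locus where the $m$-th iterate of Frobenius of $[\upi]$ is a zero of $E$; each individual factor sits in $\wta^+ \subset \wta^{[r_n,r_k]}$, and since they converge $(p,u)$-adically to $1$ in $\wta^+$, the infinite product $\lambda = \prod_{m \geq 0} \varphi^m(E(u)/E(0))$ converges in the Fr\'echet topology of $\wtb^{[r_n,+\infty)}$ for $n$ large. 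Both are clearly $G_L$-fixed, so live in $\wtb^{[r_n,+\infty)}_L$.

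Next I would treat the main obstacle, namely inverting $\fkt = t/(p\lambda)$ inside $\wtb^{[r_n,+\infty)}$. The idea is that $t$ vanishes simply along every divisor $\varphi^{-m}(\ker\theta)$ ($m \geq 0$), while $\lambda$, by construction as $\prod_{m \geq 0} \varphi^m(E(u)/E(0))$, has a zero of order exactly one along each of these same divisors (recall $E(u)$ generates $\ker\theta$ after we identify $u$ with $[\upi]$, so $\varphi^m(E(u))$ generates $\varphi^{-m}(\ker\theta)$ after the appropriate normalization). Thus on each annulus $\wtb^{[r_n,r_k]}$ the quotient $t/\lambda$ is a unit: it has no zeros on the annulus and its reciprocal is bounded. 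One passes to the nested intersection to get $1/\fkt \in \wtb^{[r_n,+\infty)}_L$. (In \cite{GP} one actually writes $\fkt$ as an infinite rearrangement and bounds things explicitly; I would follow that computation to get the quantitative statement.)

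For the second assertion (pro-analyticity under $\hat{G}$) I would argue separately on each Banach piece $\wtb^{[r_n,r_k]}_L$. The element $t$ is pro-analytic because $\gk$ acts on it via the cyclotomic character, $g(t) = \chi_p(g) t$, which is a locally analytic function of $g$ on $\hat{G}$; the same applies to $\log t$ and hence to power series in $t$. For $\lambda$, each factor $\varphi^m(E(u)/E(0))$ involves only $\varphi^m([\upi])$, on which the $\gal(L/\kinfty)$ part of $\hat{G}$ acts trivially and $\tau$ acts by $\varphi^m([\upi]) \mapsto \varphi^m([\upi]) \cdot \varphi^m([\ueps])$; this is again a pro-analytic dependence on $\tau$ (the $\log$ of the multiplier is a scalar multiple of $t$). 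Since pro-analytic vectors form a ring (cf.\ \cite[\S2]{Ber16}) stable under passage to inverses when the inverse exists in the ambient Fr\'echet/LF space, and since we just showed $1/\fkt$ does exist there, this gives $\fkt, 1/\fkt \in (\wtb^{[r_n,+\infty)}_L)^{\hat{G}\dpa}$ as required.

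The delicate point is entirely the quantitative matching of zeros of $t$ and $\lambda$ on each annulus $[r_n,r_k]$ so that $1/\fkt$ genuinely lies in the $p$-adic completion rather than just in some larger ring of meromorphic functions; everything else is a packaging of standard facts about $\wtb^{[r_n,+\infty)}$ and pro-analytic vectors.
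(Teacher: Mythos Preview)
The paper gives no proof here; the lemma is stated purely as a citation of \cite[Lem.~5.1.1]{GP}. Your sketch is essentially a reconstruction of that argument and is correct in substance: separate $\fkt$ into $t$ and $p\lambda$, match the simple zeros of $t$ on each closed annulus against those of $\lambda$ so that the quotient is a unit, and then check pro-analyticity using the explicit $\hat G$-action on $[\upi]$ and $[\ueps]$ (the former through $\tau(\upi)=\upi\cdot\ueps^{c(\tau)}$, the latter through the cyclotomic character). Your remark that pro-analytic vectors are stable under inversion is correct: on each Banach piece, inversion on the unit group is analytic, so composing with a locally analytic orbit map preserves local analyticity.

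Two minor points. First, since Definition~\ref{defnfkt} already records that $\fkt \in \ainf = \wta^+$, the inclusion $\fkt \in \wtb^{[r_n,+\infty)}$ is immediate; only the invertibility of $\fkt$ and the pro-analyticity really require the zero-matching argument you give. Second, your indexing is off by a sign: $\varphi^m(E(u))$ generates $\varphi^m(\ker\theta) = \ker(\theta\circ\varphi^{-m})$, not $\varphi^{-m}(\ker\theta)$. What makes the matching work is that on $\wtb^{[r_n,+\infty)}$ with $n \geq 0$, only the divisors $\ker(\theta\circ\varphi^{-m})$ with $m \geq n \geq 0$ meet the annuli, and these are exactly the zeros contributed by the factors $\varphi^m(E/E(0))$ of $\lambda$.
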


Let us caution that $\mathfrak t$ is an element of (the Banach space) $(\ainf)^{G_L}$, but it is \emph{not} a  {locally analytic vector} inside it; roughly speaking, we need the bigger spaces  $\wt{\mathbf{B}}^{I}_L$  (which contains $t$, the $p$-adic $2\pi i$) to take ``derivatives".

\begin{defn} \label{defndiffwtb}
We define two differential operators on the ring $(\wt{\mathbf{B}}_{  \rig, L}^{\dagger})^{\hat{G}\dpa}$, which are ``normalized" operators of those in Notation \ref{notalieg}. 
\begin{enumerate}
\item (cf. \cite[\S 4]{Gaojems}). Define
$$N_\nabla: (\wt{\mathbf{B}}_{  \rig, L}^{\dagger})^{\hat{G}\dpa} \to (\wt{\mathbf{B}}_{  \rig, L}^{\dagger})^{\hat{G}\dpa}$$ 
by setting
\begin{equation}\label{eqnnring}
{N_\nabla:=}
\begin{cases} 
\frac{1}{p\mathfrak{t}}\cdot \nabla_\tau, &  \text{if }  \Kinfty \cap \Kpinfty=K; \\
& \\
\frac{1}{p^2\mathfrak{t}}\cdot \nabla_\tau=\frac{1}{4\mathfrak{t}}\cdot \nabla_\tau, & \text{if }  \Kinfty \cap \Kpinfty=K(\pi_1), \text{ cf. Notation \ref{nota hatG}. }
\end{cases}
\end{equation}
 (the definition is valid over the bigger ``log ring" $(\wt{\mathbf{B}}_{  \log, L}^{\dagger})^{\hat{G}\dpa}$ which is not needed here.)
 Note that $1/\mathfrak t$ is in $ (\wt{\mathbf{B}}^\dagger_{\rig, L})^{\hat{G}\dla}$ by Lem \ref{lem b}, hence  division by $\fkt$ is allowed.
 A convenient and useful fact is that $N_\nabla$ commutes with $\gal(L/\kinfty)$, i.e., $gN_\nabla=N_\nabla g, \forall g\in \gal(L/\kinfty)$, cf. \cite[Eqn. (4.2.5)]{Gaojems}.
 
\item (cf.   \cite[5.3.4]{GP}.)
Define 
$$\partial_{\gamma}: (\wt{\mathbf{B}}_{  \rig, L}^{\dagger})^{\hat{G}\dpa} \to (\wt{\mathbf{B}}_{  \rig, L}^{\dagger})^{\hat{G}\dpa}$$ 
via
$$\partial_{\gamma}:=\frac{1}{\mathfrak t}\nabla_{\gamma}.$$
Since $\gamma(\mathfrak t) = \chi(\gamma) \cdot \mathfrak t$, we have $\nabla_{\gamma}(\mathfrak t) =\mathfrak t $ and hence
\[
\partial_{\gamma}(\mathfrak t)  = 1.
\]
\end{enumerate}
\end{defn}

\begin{rem}\label{remcompaKis}
We mention some remarks about  $N_\nabla$ that are not used in the sequel.
\begin{enumerate}
\item The   $p$ (resp. $p^2$) in the denominator of \eqref{eqnnring} makes  our monodromy operator compatible with earlier theory of Kisin in \cite{Kis06}, but \emph{up to a minus sign}. See also \cite[1.4.6]{Gaojems} for general convention of minus signs in that paper.

\item The operator $N_\nabla$ in fact restricts to an operator
$$
N_\nabla: \B_{\rig, \kinfty}^\dagger \to \B_{\rig, \kinfty}^\dagger, 
$$
where $\B_{\rig, \kinfty}^\dagger$ is the Robba ring in the $(\varphi, \tau)$-module setting, cf. \cite[\S 4]{Gaojems}.
 
\end{enumerate}
\end{rem}

\section{Kummer tower and Sen theory}
\label{seckummersen}

In this section, we first review classical (cyclotomic) Sen theory. We then compute the set of locally analytic vectors $(\hat{L})^{\hat{G}\dla}$; this is used as a \emph{bridge} transporting the $\kpinfty$-Sen theory to the $\kinfty$-Sen theory.
To be more precise, in \S \ref{subsecKS}, we define the $\kinfty$-Sen module $D_{\Sen, \kinfty}(W)$, and define the $\kinfty$-Sen operator $\frac{1}{\theta(u\lambda')}\cdot N_\nabla$. This operator, when linearly extended over $C$ (in fact, $(\hat L)^{\hat G \dla}$ is enough), becomes the \emph{same} as the classical Sen operator.

 \subsection{Cyclotomic tower and Sen theory}\label{subseccycSen}
 Let $\hatkpinfty$ be the $p$-adic completion of $\kpinfty$.
Similar to $\rep_\gk(C)$, let  $\rep_\gammak(\kpinfty)$ resp. $\rep_\gammak(\hatkpinfty)$ denote the category of \emph{semi-linear} representations.

 \begin{theorem}
 Base change functors induce equivalences of categories
 \begin{equation*}
 \rep_\gammak(\kpinfty)\simeq \rep_\gammak(\hatkpinfty) \simeq \rep_\gk(C).
 \end{equation*}
 Here, given $W\in \rep_\gk(C)$, the corresponding object in $\rep_\gammak(\hatkpinfty)$ is $W^{G_\kpinfty}$, and the corresponding object in $\rep_\gammak(\kpinfty)$ is 
\begin{equation}\label{senlav}
D_{\Sen, \kpinfty}(W): =(W^{G_\kpinfty})^{\gammak\dla}
\end{equation} 
 \end{theorem}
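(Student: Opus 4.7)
The statement is the classical Sen theorem of \cite{Sen80}, together with the (now standard) locally analytic vector reformulation (cf.\ \cite{BC16}). The plan is to factor the two equivalences through the intermediate category $\rep_\hatkpinfty(\gammak)$ and handle each separately. One first observes that the object $W^{G_L}$ as displayed is to be read as a $\hatkpinfty$-vector space via the canonical identification with $(W^{G_L})^{\gal(L/\kpinfty)} = W^{G_\kpinfty}$, using Ax--Sen--Tate (which yields $\hat L^{\gal(L/\kpinfty)} = \hatkpinfty$) and Galois descent along the Kummer tower.

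For the equivalence $\rep_\hatkpinfty(\gammak) \simeq \rep_C(G_K)$, I would use the functor $W \mapsto W^{G_\kpinfty}$ with quasi-inverse $D \mapsto D \hotimes_\hatkpinfty C$. The two assertions to verify, namely that $W^{G_\kpinfty} \hotimes_\hatkpinfty C \isoto W$ and that the canonical map $D \isoto (D \hotimes_\hatkpinfty C)^{G_\kpinfty}$ is an isomorphism, are both consequences of the Tate--Sen formalism: using the normalized trace $R_n: \hatkpinfty \to K(\mu_{p^n})$, viewed as a continuous $K$-linear projector with small norm on its kernel, one approximates any continuous $\gammak$-semi-linear action on a free $\hatkpinfty$-module by a cocycle with values in $\GL_d(K(\mu_{p^n}))$ for $n$ sufficiently large. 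This descent is fed by the $H^1$-vanishing extracted from Tate's continuous cohomology computation $H^i_\cont(\gammak, \hatkpinfty(j)) = 0$ for $j \neq 0$.

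For the equivalence $\rep_\kpinfty(\gammak) \simeq \rep_\hatkpinfty(\gammak)$, the functor is $D \mapsto D^{\gammak\dla}$, taking locally analytic vectors for the $1$-dimensional $p$-adic Lie group $\gammak$. Given $D$, I would first apply the descent step above to trivialize the $\gammak$-action on a large enough $K(\mu_{p^n})$-sub-lattice, at which point the Lie-algebra operator $\nabla_\gamma = \log(\gamma)/\log \chi_p(\gamma)$ (for $\gamma$ sufficiently close to $1$) becomes literally defined on a finite-level object. Its generalized eigenspace decomposition over $\kpinfty$ then exhibits a $\kpinfty$-basis; one then checks (a) that these vectors are precisely the $\gammak$-locally analytic ones, and (b) that the natural map $D^{\gammak\dla} \otimes_\kpinfty \hatkpinfty \to D$ is an isomorphism of semi-linear $\gammak$-representations. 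The quasi-inverse is the obvious base change.

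The principal technical obstacle is the Sen--Tate descent, i.e., the iterative trivialization of a cocycle in $\GL_d(\hatkpinfty)$ using the normalized traces $R_n$; all of the subsequent structural results, including the existence of the Sen operator and its interpretation via locally analytic vectors, rest on this step. Since the theorem is being recalled as a review to motivate the Kummer-tower version in \S \ref{subsecKS}, the actual writeup would simply cite \cite{Sen80} and \cite{BC16} rather than reproducing the proofs.
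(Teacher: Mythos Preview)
Your proposal is correct and matches the paper's own approach exactly: the paper's proof consists of two sentences citing \cite{Sen80} for the equivalences (where $D_{\Sen,\kpinfty}$ appears as the $K$-finite vectors) and \cite[Thm.~3.2]{BC16} for the identification of $K$-finite vectors with locally analytic vectors. Your additional exposition of the Tate--Sen descent mechanism and the role of normalized traces is accurate background, and your observation that the displayed $W^{G_L}$ must be read as $W^{G_{\kpinfty}}$ is well-taken (compare the formula in \S\ref{s12}, where $G_{\kpinfty}$ appears).
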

 \begin{proof}
 This is proved in \cite{Sen80}, except the last formula \eqref{senlav}. In \cite{Sen80}, $D_{\Sen, \kpinfty}(W)$ is recovered as the \emph{``$K$-finite vectors"}; it turns out they coincide with the \emph{locally analytic vectors}, by \cite[Thm. 3.2]{BC16}.
 \end{proof}
 
 \begin{notation}\label{notaSenop}
 Let $W\in \rep_\gk(C)$.
 \begin{enumerate}
 \item Since \eqref{senlav} implies $\Gamma_K$-action on $D_{\Sen, \kpinfty}(W)$ is locally analytic, thus the operator $\nabla_\gamma$ in Notation \ref{notalieg}   induces a operator
\begin{equation}\label{eqsenclassical}
\nabla_\gamma: D_{\Sen, \kpinfty}(W) \to D_{\Sen, \kpinfty}(W).
\end{equation}
This is called the \emph{Sen operator}: it is $\kpinfty$-linear because $\nabla_\gamma$ kills $\kpinfty$.

\item  We can \emph{$C$-linearly extend} $\nabla_\gamma$ to $D_{\Sen, \kpinfty}(W) \otimes_\kpinfty C=W$. That is, we obtain a $C$-linear operator
\begin{equation}\label{eqsenextendtoc}
\nabla_\gamma: W \to W;
\end{equation}
we still call it the \emph{Sen operator}.
 \end{enumerate}
 \end{notation}

\subsection{Locally analytic vectors in $\hat{L}$} \label{seclavL}

Let $\bdrplus$  denote the usual de Rham period ring. Let $\theta: \bdrplus \to C$ be the usual map which extends $\theta: \ainf \to \O_C$.
Recall that as in \cite[\S 2.2]{Ber02}, when $r_n \in I$, there exists a continuous embedding $\iota_n: \wt{\mathbf{B}}^{I} \hookrightarrow \bdrplus$.

\begin{lemma}
Consider the image of $\mathfrak{t}$  via the map $\theta: \ainf\to \oc$, then $0 \neq \theta(\mathfrak t) \in (\hat{L})^{\hat{G}\dla}$.  In addition, $1/\theta(\mathfrak t) \in (\hat{L})^{\hat{G}\dla}$.
\end{lemma}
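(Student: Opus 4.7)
The plan is to split the claim into three tasks: locate $\theta(\mathfrak t)$ inside $\hat L$, verify local analyticity, and check nonvanishing (from which the statement about $1/\theta(\mathfrak t)$ follows formally).

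First I would note that $\mathfrak t\in\ainf$ by Def.~\ref{defnfkt}, so $\theta(\mathfrak t)\in\O_C$. Since $L\supset K(\pi_n,\mu_n)$ for all $n$, the group $G_L$ fixes the Teichm\"uller lifts $[\underline\pi]$ and $[\underline\varepsilon]$; consequently $G_L$ fixes $t=\log[\underline\varepsilon]$, each Frobenius iterate $\varphi^n(E(u))$, and therefore $\lambda$ and $\mathfrak t$. Since $\theta$ is $G_K$-equivariant, this gives $\theta(\mathfrak t)\in \O_C^{G_L}=\O_{\hat L}\subset\hat L$.

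For local analyticity I would invoke Lem.~\ref{lem b}, which places $\mathfrak t$ and $1/\mathfrak t$ in $(\wt{\mathbf B}_L^{[r_n,+\infty)})^{\hat G\dpa}$ for some $n\geq 0$. The embedding $\iota_n:\wt{\mathbf B}^{[r_n,+\infty)}\to\bdrplus$ and the projection $\theta:\bdrplus\to C$ are both continuous and $\hat G$-equivariant, so their composition sends pro-analytic vectors into $C$-valued pro-analytic (equivalently, locally analytic, as $C$ is Banach) vectors, and the images lie in the $G_L$-invariants $\hat L$. Hence $\theta(\mathfrak t)\in(\hat L)^{\hat G\dla}$, and similarly $\theta(1/\mathfrak t)\in(\hat L)^{\hat G\dla}$.

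The main obstacle is the nonvanishing of $\theta(\mathfrak t)$, because the naive formula $\theta(t)/(p\theta(\lambda))$ reads as $0/0$, so I must cancel the common $\ker\theta$-factor. Fix a generator $\xi\in\ainf$ of $\ker\theta$, and write $t=\xi s$ with $s\in\acris^\times$ and $E(u)=\xi u_E$ with $u_E\in\ainf^\times$ (the latter because $E(u)$ is distinguished in $\ainf$). Writing $E(0)=p e_0$ with $e_0\in W(k)^\times$, the $n=0$ factor of $\lambda=\prod_{n\geq 0}\varphi^n(E(u)/E(0))$ contributes exactly one power of $\xi$ and absorbs the $p$ in the denominator; the tail $\lambda'':=\prod_{n\geq 1}\varphi^n(E(u)/E(0))$ satisfies $\theta(\lambda'')=\prod_{n\geq 1}E(\pi^{p^n})/E(0)$, and each $E(\pi^{p^n})\neq 0$ since $|\pi^{p^n}|<|\pi|$ prevents $\pi^{p^n}$ from being a root of the Eisenstein polynomial $E$. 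A direct rearrangement then yields $\theta(\mathfrak t)=e_0\,\theta(s)/\bigl(\theta(u_E)\,\theta(\lambda'')\bigr)$, a product of nonzero elements, so $\theta(\mathfrak t)\neq 0$. Since $\hat L$ is a field, $1/\theta(\mathfrak t)\in\hat L$, and combining with the previous paragraph applied to $1/\mathfrak t$ (or equivalently with closure of $(\hat L)^{\hat G\dla}$ under inversion of nonzero elements) gives $1/\theta(\mathfrak t)\in(\hat L)^{\hat G\dla}$.
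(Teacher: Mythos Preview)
Your nonvanishing argument is fine and essentially the same as the paper's (you unpack the cancellation of the common $\ker\theta$-factor a bit more explicitly, whereas the paper just notes $\theta(t/([\underline\varepsilon]-1))=1$ and that $E$ and $\xi$ generate the same ideal).

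The local analyticity step, however, has a genuine gap. You write that the composite $\theta\circ\iota_n:\wtb^{[r_n,+\infty)}\to C$ is continuous and $\hat G$-equivariant and therefore sends $\mathfrak t$ to $\theta(\mathfrak t)$ as a locally analytic vector. Continuity and equivariance are correct, but the image is \emph{not} $\theta(\mathfrak t)$. By construction (cf.\ \cite[\S2.2]{Ber02}) the embedding $\iota_n$ factors as
\[
\wtb^{[r_n,r_n]}\xrightarrow{\ \varphi^{-n}\ }\wtb^{[r_0,r_0]}\xrightarrow{\ \iota_0\ }\bdrplus,
\]
so $\theta\circ\iota_n(\mathfrak t)=\theta(\varphi^{-n}(\mathfrak t))$, not $\theta(\mathfrak t)$. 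You cannot simply take $n=0$ either: Lem.~\ref{lem b} only guarantees $\mathfrak t\in(\wtb_L^{[r_n,+\infty)})^{\hat G\dpa}$ for some possibly large $n=n(\mathfrak t)$, and there is no reason $\mathfrak t$ should be locally analytic for the Banach norm of $\wtb_L^{[r_0,r_0]}$.

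The fix, which the paper carries out, is to use the Frobenius relation $\varphi(\mathfrak t)=\dfrac{pE(u)}{E(0)}\,\mathfrak t$ to write
\[
\mathfrak t=\varphi^{-n}(\mathfrak t)\cdot\prod_{i=1}^n\varphi^{-i}\!\left(\frac{pE(u)}{E(0)}\right)
\]
inside $\ainf$. Applying $\theta$, the first factor is locally analytic by your argument, and each $\theta(\varphi^{-i}(pE(u)/E(0)))$ lies in $K_\infty$ and is therefore locally trivial, hence locally analytic. Multiplying these together gives $\theta(\mathfrak t)\in(\hat L)^{\hat G\dla}$. The statement for $1/\theta(\mathfrak t)$ then follows since $(\hat L)^{\hat G\dla}$ is a field.
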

\begin{proof}
We first check $\theta(\fkt)\neq 0$. Recall $\fkt=\frac{t}{p\lambda}$. Note $\theta(\frac{t}{[\underline{\varepsilon}]-1})=1$ using the expansion $t=\log([\varepsilon])$. Hence it suffices to show $\theta(\frac{[\underline{\varepsilon}]-1}{E(u)})\neq 0$: this holds because both $E$ and $\frac{[\underline{\varepsilon}]-1}{\varphi^{-1}([\underline{\varepsilon}]-1)}$ generate the principal ideal $\ker\theta$.

The proof of analyticity for $\theta(\mathfrak t)$ and $1/\theta(\mathfrak t)$ are the same; alternatively, we can use the fact that  $(\hat{L})^{\hat{G}\dla}$ is a field \cite[Lem. 2.5]{BC16}. We treat $\theta(\mathfrak t)$  in the following.
Choose $n \geq n(\fkt)$ as in Lem. \ref{lem b}  so that $\fkt \in (\wtb^{[r_n, r_n]}_L)^{\hat{G}\dla}$. 
Consider the image of $\fkt$ under the following composite map
\begin{equation}
\label{eqcompftk}
\wtb^{[r_n, r_n]} \xrightarrow{\iota_n} \bdrplus \xrightarrow{\theta} C;
\end{equation}
since both maps are continuous, hence the image is an element in $(\hat{L})^{\hat{G}\dla}$. Unfortunately, the map $\iota_n$   factors as
$$\wtb^{[r_n, r_n]} \xrightarrow{\varphi^{-n}} \wtb^{[r_0, r_0]}\xrightarrow{\iota_0} \bdrplus,$$
hence the   image of $\fkt$ under \eqref{eqcompftk} is only $\theta(\varphi^{-n}(\fkt))$. That is, we obtained
$$\theta(\varphi^{-n}(\fkt)) \in (\hat{L})^{\hat{G}\dla}.$$
Nonetheless, we have $\varphi(\fkt) =\frac{pE(u)}{E(0)} \fkt$.  One can deduce that
$$\fkt = \varphi^{-n}(\fkt) \cdot \prod_{i=1}^n \varphi^{-i}(\frac{pE(u)}{E(0)}),$$
which holds as an equality inside $\ainf$. To see that $\theta(\fkt) \in (\hat{L})^{\hat{G}\dla}$, it then suffices to see that each $\theta(\varphi^{-i}(\frac{pE(u)}{E(0)}) )$ is   locally analytic: but each of these is an element of $\kinfty$ hence is locally analytic (indeed, locally trivial). 
\end{proof}

 \begin{defn}\label{defnablaforL}
 Over the field $(\hat{L})^{\hat{G}\dla}$, we can define two differential operators, which are precisely \emph{``$\theta$-specializations"} of those in Def. \ref{defndiffwtb}.
 \begin{enumerate}
 \item Define $$N_\nabla: (\hat{L})^{\hat{G}\dla} \to (\hat{L})^{\hat{G}\dla}$$ 
by setting
\begin{equation}\label{eqnnring}
{N_\nabla:=}
\begin{cases} 
\frac{1}{p\theta(\mathfrak{t})}\cdot \nabla_\tau, &  \text{if }  \Kinfty \cap \Kpinfty=K; \\
& \\
\frac{1}{p^2\theta(\mathfrak{t})}\cdot \nabla_\tau=\frac{1}{4\theta(\mathfrak{t})}\cdot \nabla_\tau, & \text{if }  \Kinfty \cap \Kpinfty=K(\pi_1), \text{ cf. Notation \ref{nota hatG}. }
\end{cases}
\end{equation}
Similar as in Def. \ref{defndiffwtb}, we have $gN_\nabla=N_\nabla g, \forall g\in \gal(L/\kinfty)$.

\item Define 
$$\partial_{\gamma}: (\hat{L})^{\hat{G}\dla} \to (\hat{L})^{\hat{G}\dla}$$ 
via
$$\partial_{\gamma}:=\frac{1}{\theta(\mathfrak t)}\nabla_{\gamma}.$$

\item Both these (normalized) differential operators are well-defined for a $(\hat{L})^{\hat{G}\dla}$-vector space  equipped with semi-linear and locally analytic action  by $\hat{G}$.
 \end{enumerate}
 \end{defn}

The following theorem of Berger-Colmez is crucial for the discussion in the following.

\begin{theorem}\label{thmBC61}
Let $\wt{K}/K$ be a Galois extension contained in $\overline{K}$ whose Galois group is a $p$-adic Lie group with    Lie algebra $\mathfrak g$, and let $\wh{\wt K}$ be the $p$-adic completion.
There exists some $m \in \mathbb N$, a non-zero element $\mathfrak a \in \O_{\wh{\wt K}(\mu_{p^m})} \otimes_\zp \mathfrak g$ such that $\mathfrak a=0$ over $(\wh{\wt K})^{\gal(\wt{K}/K)\dla}$.
\end{theorem}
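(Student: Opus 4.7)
The plan is to apply the structural analysis of locally analytic vectors from \cite{BC16}, where the existence of such an $\mathfrak{a}$ is essentially a formal consequence of the main theorem. Set $G := \gal(\wt K/K)$ and $d := \dim_{\qp} \mathfrak g$, and fix a $\zp$-basis $X_1, \ldots, X_d$ of an open lattice in $\mathfrak g$. The goal is to produce a non-trivial $\O_{\wh{\wt K}(\mu_{p^m})}$-linear relation among the derivations $X_1, \ldots, X_d$ acting on $(\wh{\wt K})^{G\dla}$.

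First I would reduce to a ``deeply ramified'' situation in the sense of Coates--Greenberg, by enlarging $K$ to $K(\mu_{p^m})$ for $m$ large enough. After this base change, the composite $\wt K(\mu_{p^m})/K(\mu_{p^m})$ contains a canonical cyclotomic direction, and the Ax--Sen--Tate theorem (together with its locally analytic refinement from \cite{BC16}) gives precise control on the fixed fields $(\wh{\wt K})^{H}$ and on their analytic subrings $(\wh{\wt K})^{H\dla}$ for open subgroups $H \subset G$. Next I would invoke the key input from \cite{BC16}: the action of $\mathfrak g$ on $(\wh{\wt K})^{G\dla}$ is governed by a single ``infinitesimal Hodge--Tate'' map, i.e.\ each $X_i$ acts on $(\wh{\wt K})^{G\dla}$ as multiplication by a scalar in $\wh{\wt K}(\mu_{p^m})$ composed with a fixed canonical derivation. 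Equivalently, the image of $\mathfrak g$ in $\Der((\wh{\wt K})^{G\dla}, \wh{\wt K}(\mu_{p^m}))$ generates an $\O_{\wh{\wt K}(\mu_{p^m})}$-submodule of rank at most one, while the source $\O_{\wh{\wt K}(\mu_{p^m})} \otimes_{\zp} \mathfrak g$ has rank $d$. A dimension count over the field $\wh{\wt K}(\mu_{p^m})$ then yields a non-zero kernel element whenever $d \geq 1$; after clearing denominators so that the coefficients lie in $\O_{\wh{\wt K}(\mu_{p^m})}$ rather than its fraction field, this produces the required $\mathfrak a$.

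The main difficulty is precisely the ``rank one'' statement for the Lie algebra action on $(\wh{\wt K})^{G\dla}$, which is a genuine input from \cite{BC16}: it relies on a delicate convergence analysis of the $G$-action via explicit power-series expansions of locally analytic vectors, together with Tate's vanishing results for Galois cohomology with coefficients in $C$. Once this structural fact is granted, extracting the element $\mathfrak a$ is routine linear algebra over $\O_{\wh{\wt K}(\mu_{p^m})}$, and all that remains is to verify that the ``$m$'' coming out of the Coates--Greenberg reduction is the same as the one needed to describe the Hodge--Tate character of $G$ with integral coefficients, which is standard.
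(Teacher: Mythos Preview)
The paper does not actually prove this statement: its proof consists of the single sentence ``This follows from \cite[Thm.~6.1, Prop.~6.3]{BC16}'', together with the remark that the element $\mathfrak a$ can be taken to be the Sen operator constructed in \cite{BC16}. In other words, Theorem~\ref{thmBC61} is a quoted result, and the paper treats it as a black box. Your proposal instead attempts to sketch the internal argument of Berger--Colmez; that is a different undertaking, and the comparison is really between your sketch and \cite{BC16}, not between your sketch and the present paper.

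As a sketch of the Berger--Colmez argument, your outline has a genuine gap. You assert that the image of $\mathfrak g$ in $\Der\bigl((\wh{\wt K})^{G\dla},\wh{\wt K}(\mu_{p^m})\bigr)$ has rank at most one and then conclude ``a dimension count \ldots\ yields a non-zero kernel element whenever $d\ge 1$''. But rank $\le 1$ together with $d=1$ does not force a non-trivial kernel, so the argument as written fails for one-dimensional Lie groups. More importantly, the ``rank $\le 1$'' reformulation is your own packaging and is not how \cite{BC16} proceeds: Berger--Colmez construct the higher-dimensional Sen operator $\Theta_{\Sen}\in \wh{\wt K}(\mu_{p^m})\otimes_{\Zp}\mathfrak g$ directly (as the operator on the Sen module of the adjoint representation), and then prove separately that it is non-zero and that it annihilates the locally analytic vectors. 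Non-vanishing is not obtained by a rank count but by evaluating on a cyclotomic twist, which is exactly the step your outline does not supply. If you want to keep your approach, you would need to argue that the rank is in fact zero when $d=1$ (true, since then $(\wh{\wt K})^{G\dla}=\wt K$ consists of smooth vectors), and separately justify the rank-one bound for $d\ge 2$; at that point you are essentially reproving \cite[Prop.~6.3]{BC16} anyway.
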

\begin{proof}
This follows from \cite[Thm. 6.1, Prop. 6.3]{BC16}. Note in \emph{loc. cit.}, one can make $\mathfrak a$ ``primitive" (defined above \cite[Thm. 6.1]{BC16}. In addition, this $\mathfrak a$ can be chosen as a certain ``Sen operator" (see \cite[Prop. 6.3]{BC16}).
\end{proof}

\begin{cor}\label{corkill}
Up to a nonzero scalar, the combination $
\theta(u\lambda') \nabla_\gamma +N_\nabla$ (from Def. \ref{defnablaforL}) is the \emph{unique} non-zero operator in $\hat{L} \otimes_\zp \Lie(\hat G)$ that kills all of $(\hat{L})^{\hat{G}\dla}$. Here $\lambda'$ is the $u$-derivative of $\lambda$ defined in Def. \ref{defnfkt}.
\end{cor}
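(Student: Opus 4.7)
The plan is to derive the statement from Theorem~\ref{thmBC61} together with a single test computation on the vector $\theta(\mathfrak{t})$.

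First, I would apply Theorem~\ref{thmBC61} to the extension $L/K$, whose group $\hat G$ has two-dimensional Lie algebra. Since $\mu_{p^\infty}\subset L$, the ring $\mathcal O_{\hat L(\mu_{p^m})}$ is just $\mathcal O_{\hat L}$, so the theorem produces a non-zero element $\mathfrak a\in\hat L\otimes_{\zp}\Lie(\hat G)$ (after inverting $p$) that vanishes on $(\hat L)^{\hat G\dla}$. Thus the $\hat L$-subspace $\mathrm{Ann}\subset\hat L\otimes_{\zp}\Lie(\hat G)$ of killers has dimension at least one.

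Second, I would show that $\mathrm{Ann}$ has dimension exactly one by showing it is contained in the codimension-one $\hat L$-subspace $\mathrm{Ann}_{\theta(\mathfrak t)}$ of operators killing the specific locally analytic vector $\theta(\mathfrak t)\in(\hat L)^{\hat G\dla}$. From $\gamma(\mathfrak t)=\chi(\gamma)\mathfrak t$ one has $\nabla_\gamma(\theta(\mathfrak t))=\theta(\mathfrak t)\neq 0$, so the defining linear equation of $\mathrm{Ann}_{\theta(\mathfrak t)}$ is non-trivial and $\dim_{\hat L}\mathrm{Ann}_{\theta(\mathfrak t)}=1$; combined with $\mathfrak a\in\mathrm{Ann}\subset\mathrm{Ann}_{\theta(\mathfrak t)}$, this forces $\mathrm{Ann}=\mathrm{Ann}_{\theta(\mathfrak t)}$, and both are one-dimensional.

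Third, I would identify the line by verifying that $\theta(u\lambda')\nabla_\gamma+N_\nabla$ is a non-zero element of $\mathrm{Ann}_{\theta(\mathfrak t)}$. To compute $\nabla_\tau(\theta(\mathfrak t))$, I would use $\mathfrak t=t/(p\lambda)$ together with $\nabla_\tau(t)=0$ (since $\tau$ fixes $[\underline\varepsilon]$) and $\nabla_\tau(u)=\delta\, tu$, where $\delta=1$ respectively $\delta=2$ in the cases $K_\infty\cap K_{p^\infty}=K$ respectively $K(\pi_1)$; these come from differentiating $\tau(u)=u[\underline\varepsilon]^\delta$ at the identity. Extending by the Leibniz rule to the product defining $\lambda$, commuting the Galois-equivariant map $\theta$ past $\nabla_\tau$, and simplifying yield $\nabla_\tau(\theta(\mathfrak t))=-c\,\theta(u\lambda')\,\theta(\mathfrak t)^2$ with $c=p\delta\in\{p,p^2\}$. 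Inspecting \eqref{eqnnring}, the proposed combination equals $\theta(u\lambda')\nabla_\gamma+\tfrac{1}{c\,\theta(\mathfrak t)}\nabla_\tau$, and applying it to $\theta(\mathfrak t)$ gives $\theta(u\lambda')\theta(\mathfrak t)+\tfrac{1}{c\,\theta(\mathfrak t)}\cdot(-c\,\theta(u\lambda')\theta(\mathfrak t)^2)=0$. The combination therefore lies in $\mathrm{Ann}_{\theta(\mathfrak t)}=\mathrm{Ann}$, and since $\theta(\mathfrak t)\neq 0$ it is itself non-zero, so it spans the killer line.

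The main obstacle is the derivation-of-$\mathfrak t$ step: one must justify that $\nabla_\tau$ can be applied termwise to the convergent product $\lambda=\prod_n\varphi^n(E(u)/E(0))$ inside the pro-analytic ring $(\wt{\mathbf B}^{[r_n,+\infty)}_L)^{\hat G\dpa}$ of Lem.~\ref{lem b}, and that $\theta$ (via the appropriate $\iota_m$) commutes with $\nabla_\tau$ on such locally analytic vectors. Both points reduce to the derivation property of $\nabla_\tau$ and continuity of $\theta$, but they are the only non-formal ingredients; once granted, everything else is a matching of coefficients.
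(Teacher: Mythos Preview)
Your argument is correct and follows essentially the same route as the paper: invoke Theorem~\ref{thmBC61} for existence, deduce uniqueness from the single test vector $\theta(\mathfrak t)$ (the paper phrases this as ``neither $\nabla_\gamma$ nor $\nabla_\tau$ alone kills everything''), and then identify the line by computing $N_\nabla(\mathfrak t)=-\mathfrak t\, u\lambda'$. The only difference is that you rederive this last formula by differentiating $\tau(u)=u[\underline\varepsilon]^\delta$ and applying Leibniz to $\mathfrak t=t/(p\lambda)$, whereas the paper quotes it from \cite[Lem.~4.1.2]{Gaojems}; your honesty about the convergence and $\theta$-compatibility steps is well placed, since those are exactly the points absorbed by that citation.
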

\begin{proof}
The existence of a linear combination  $a\nabla_\gamma +b\nabla_\tau$ that kills $(\hat{L})^{\hat{G}\dla}$ is guaranteed by Thm. \ref{thmBC61}. In addition, neither $\nabla_\gamma$ nor $\nabla_\tau$ alone can kill all of $(\hat{L})^{\hat{G}\dla}$; hence the combination has to be unique up to  a non-zero scalar. 
It hence suffices to compute this operator against the element $\theta(\mathfrak t)\in  (\hat{L})^{\hat{G}\dla}$.
Indeed, we can even make the computation inside $(\wt{\mathbf{B}}_{  \rig, L}^{\dagger})^{\hat{G}\dpa}$. 
It is easy to see  $\nabla_\gamma(\mathfrak t)=\mathfrak t$. 
Using the formula in \cite[Lem. 4.1.2]{Gaojems} (which holds uniformly even when $\Kinfty \cap \Kpinfty=K(\pi_1)$), one computes
$$ N_\nabla(\mathfrak t) = N_\nabla(\frac{t}{p\lambda}) = \frac{t}{p}\cdot (-\frac{1}{\lambda^2}) N_\nabla(\lambda) = \frac{-t}{p\lambda^2}\cdot   \lambda u\cdot \frac{d}{du}(\lambda) = -\mathfrak t u\lambda'.$$
Hence we can conclude. 
We  remark that it is more convenient to use $N_\nabla$ instead of $\nabla_\tau$ in the formula as it already subsumes the   possible normalization issues  when $p=2$.
\end{proof}


 We now determine the structure of $(\hat{L})^{\hat{G}\dla}$. We first review a description Prop. \ref{loc ana in L} by Berger-Colmez.
  We then obtain an alternative description Prop. \ref{loc ana in L new} which is more convenient for us.

\begin{construction}\label{consalphan}
\begin{enumerate}
\item As in \cite[\S 4.4]{BC16},
consider the 2-dimensional $\Qp$-representation of $G_K$ (associated to our choice of $\{\pi_n\}_{n \geq 0}$) such that $g \mapsto \smat{\chi(g) & c(g) \\ 0 & 1}$ where $\chi$ is the $p$-adic cyclotomic character. Since the co-cycle $c(g)$ becomes trivial over $C_p$, there exists $\alpha \in C_p$ (indeed, $\alpha \in \hat{L}$) such that $c(g) = g(\alpha)\chi(g)-\alpha$.
This implies  $g(\alpha) = \alpha/\chi(g) + c(g)/\chi(g)$ and so $\alpha \in \hat{L}^{\hat{G}\dla}$.

\item 
Now similarly as in the beginning of \cite[\S 4.2]{BC16}, let $\alpha_n \in L$ such that $\|\alpha-\alpha_n\|_p \leq p^{-n}$. Then there exists $r(n) \gg0$ such that if $m \geq r(n)$, then $\|\alpha-\alpha_n\|_{\hat{G}_m}= \|\alpha-\alpha_n\|_p$ and $\alpha-\alpha_n \in \hat{L}^{\hat{G}_m\dan}$ (see Notation \ref{nota hatG} for $\hat{G}_m$, and see Def. \ref{defLAV} for  $\|\cdot \|_{\hat{G}_m}$ ). We can furthermore suppose that $\{r(n)\}_n$ is an increasing sequence.
\end{enumerate}
\end{construction}

\begin{defn}
Let $(H, \|\cdot \|)$ be a $\Qp$-Banach algebra such that $\|\cdot \|$ is sub-multiplicative, and let $W \subset H$ be a $\Qp$-subalgebra. Let $T$ be a variable, and let  $W \dacc{T}_n$ be the vector space consisting of $\sum_{k \geq 0} a_k T^k$ with $a_k \in W$, and $p^{nk} a_k \to 0$ when $k \to +\infty$. For $h \in H$ such that $\|h \|\leq p^{-n}$, denote $W \dacc{h}_n$ the image of the evaluation map $W \dacc{T}_n \to H$ where $T \mapsto h$.
\end{defn}

\begin{prop} 
\label{loc ana in L}
\begin{enumerate}
\item $\hat{L}^{\hat{G}\dla} =\cup_{n \geq 1} K({\mu_{r(n)}, \pi_{r(n)}})\dacc{ \alpha-\alpha_n }_n. $
\item $\hat{L}^{\hat{G}\dla, \nabla_\gamma=0} = L.$
\item $\hat{L}^{\tau\dla, \gamma=1} = {K_{\infty}}.$
\end{enumerate}
\end{prop}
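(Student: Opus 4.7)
\end{prop}

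\begin{proof}[Proof plan]
The plan is to establish (1) via the Berger--Colmez Tate--Sen machinery for locally analytic vectors, then derive (2) by a direct series computation, and finally obtain (3) formally from (2).

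For (1), the approach is to apply the general framework for locally analytic vectors of $p$-adic Lie group representations on Banach spaces (as in \cite[\S 4]{BC16}) to the $2$-dimensional $p$-adic Lie group $\hat G$ acting on $\hat L$. The classical cyclotomic Sen theorem gives $\hatkpinfty^{\gammak\dla} = \kpinfty$, which ensures that the cyclotomic sub-tower above each finite level $K(\mu_{r(n)})$ contributes no new analytic direction beyond itself. Since $\bigcup_n K(\mu_{r(n)}, \pi_{r(n)}) = L$, there is a single remaining analytic coordinate transverse to $L$ inside $\hat L$; by Construction \ref{consalphan}, the element $\alpha$ provides such a coordinate, and its approximation $\alpha - \alpha_n$ has $\hat G_{r(n)}$-analytic norm at most $p^{-n}$. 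A standard descent argument using normalized trace maps from $\hat L$ down to $K(\mu_{r(n)}, \pi_{r(n)})$ then identifies
\[
\hat L^{\hat G_{r(n)}\dan} = K(\mu_{r(n)}, \pi_{r(n)})\dacc{\alpha - \alpha_n}_n,
\]
and passing to the inductive limit over $n$ yields (1).

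For (2), the strategy is to differentiate the cocycle relation $g(\alpha) = \alpha/\chi(g) + c(g)/\chi(g)$ at $g = 1$ to obtain $\nabla_\gamma(\alpha) = -\alpha + c'$ for a specific element $c' \in \hat L^{\hat G\dla}$ arising from the derivative of the Kummer cocycle. Any coefficient $a \in K(\mu_{r(n)}, \pi_{r(n)})$ lies in a finite extension of $K$ and is in particular $\gal(L/\kinfty)$-locally trivial, so $\nabla_\gamma(a) = 0$; also $\nabla_\gamma(\alpha_n) = 0$ for the same reason. Writing $x = \sum_{k \geq 0} a_k(\alpha - \alpha_n)^k$ from (1) and expanding $\nabla_\gamma(x) = 0$, one obtains the recursion
\[
(c' - \alpha_n)(k+1)\, a_{k+1} = k\, a_k \quad \text{for all } k \geq 0.
\]
Since $c' - \alpha_n$ is a nonzero element of the field $\hat L^{\hat G\dla}$ (cf. \cite[Lem. 2.5]{BC16}), setting $k = 0$ gives $a_1 = 0$, and the recursion then forces $a_k = 0$ for all $k \geq 1$; hence $x = a_0 \in K(\mu_{r(n)}, \pi_{r(n)}) \subset L$. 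The reverse inclusion is immediate since every element of $L$ lies in a finite Galois extension of $K$ and is therefore $\hat G$-locally trivial.

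For (3), the argument reduces to (2): by Notation \ref{notataula}(2), $\hat L^{\tau\dla, \gamma=1} \subset \hat L^{\hat G\dla}$, and since $\gamma(x) = x$ strictly implies $\nabla_\gamma(x) = 0$, part (2) gives $\hat L^{\tau\dla, \gamma=1} \subset L$; intersecting with $\hat L^{\gamma=1}$ yields $L^{\gal(L/\kinfty)} = \kinfty$. The reverse inclusion is clear since each element of $\kinfty$ lies in some $K(\pi_n) \subset L^{\gamma=1}$ on which $\tau$ acts analytically via $\tau^i(\pi_n) = \pi_n \mu_n^i$, an analytic function of $i \in \zp$. The main obstacle will be the rigorous setup of (1): verifying the Tate--Sen conditions for the two-dimensional tower $L/K$, constructing the normalized trace maps with the correct contraction estimates, and justifying that the single coordinate $\alpha - \alpha_n$ is already sufficient to generate all locally analytic vectors above $K(\mu_{r(n)}, \pi_{r(n)})$. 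Once (1) is in hand, (2) is a direct calculation using $\nabla_\gamma(\alpha) = -\alpha + c'$, and (3) follows formally.
\end{proof}
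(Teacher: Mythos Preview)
Your plan for parts (2) and (3) is essentially correct and matches what the paper does (it defers to \cite[Prop.~3.3.2]{GP}). One small correction in (2): the Kummer cocycle $c$ vanishes identically on $\gal(L/\kinfty)$, since this subgroup fixes every $\pi_n$; hence differentiating $g(\alpha)=\alpha/\chi(g)+c(g)/\chi(g)$ in the $\gamma$-direction gives $\nabla_\gamma(\alpha)=-\alpha$ exactly, i.e.\ your $c'$ is $0$. Your recursion then reads $-\alpha_n(k+1)a_{k+1}=ka_k$ and the conclusion is unchanged (note $\alpha\neq 0$ since $c$ is a nontrivial cocycle, so $\alpha_n\neq 0$ for $n\gg 0$).

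The genuine divergence is in (1). You propose to reach $\hat L^{\hat G_{r(n)}\dan}=K(\mu_{r(n)},\pi_{r(n)})\dacc{\alpha-\alpha_n}_n$ by a Tate--Sen style descent with normalized trace maps for the $2$-dimensional group $\hat G$. The paper (following \cite[Prop.~4.12]{BC16}) does something more direct: it exploits the identity $\nabla_\tau(\alpha)=1$ (coming from $\tau(\alpha)=\alpha+1$, since $\chi(\tau)=1$ and $c(\tau)=1$) to Taylor-expand any $x\in\hat L^{\hat G_n\dan}$ in the single variable $\alpha-\alpha_n$, with explicit coefficients
\[
y_i=\sum_{k\geq 0}(-1)^k(\alpha-\alpha_n)^k\,\nabla_\tau^{k+i}(x)\binom{k+i}{k}.
\]
One checks $\nabla_\tau(y_i)=0$, so each $y_i$ is fixed by some $\tau^{p^N}$; then the ordinary $1$-dimensional cyclotomic Sen theorem (over the base $K(\pi_N)$) forces $y_i\in K_{p^\infty}(\pi_N)\subset L$, in fact $y_i\in K(\mu_m,\pi_m)$ for suitable $m$. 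This bypasses any need to verify Tate--Sen axioms for $\hat G$: the $\tau$-direction is handled by explicit Taylor expansion, and only the classical cyclotomic result is invoked. Your trace-map approach could in principle be made to work, but setting it up rigorously for a $2$-dimensional group is precisely the ``main obstacle'' you flag, whereas the paper's route sidesteps it entirely. As a bonus, the explicit coefficient formula above gives uniqueness of the $(\alpha-\alpha_n)$-expansion for free, which is exactly what justifies the coefficient comparison you rely on in (2).
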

\begin{proof}
Item (1) is \cite[Prop. 4.12]{BC16}, the rest follow easily, cf. \cite[Prop. 3.3.2]{GP}.
We quickly recall the proof of Item (1) here. Suppose $x\in \hat{L}^{\hat{G}_n\dan}$. For $i \geq 0$, let
$$y_i = \sum_{k \geq 0} (-1)^k (\alpha - \alpha_n)^k \nabla_\tau^{k+i}(x) \binom{k+i}{k},$$
then there exists $m\geq n$ such that $y_i \in \hat{L}^{\hat{G}_m\dan}$, and
\begin{equation} \label{eqxalpha}
x = \sum_{i \geq 0} y_i (\alpha - \alpha_n)^i \in \hat{L}^{\hat{G}_m\dan}
\end{equation}
Note  roughly speaking, \eqref{eqxalpha} is the ``Taylor expansion" of $x$ with respect to the ``variable" $\alpha - \alpha_n$. The equality  \eqref{eqxalpha} holds precisely because 
\begin{equation} \label{eqnablaalpha}
\nabla_\tau (\alpha - \alpha_n) =\nabla_\tau(\alpha)=1.
\end{equation}
Finally, the fact $\nabla_\tau(y_i)=0$ will imply that $y_i\in K(\mu_m, \pi_m)$, concluding (1).
\end{proof}

\begin{prop} \label{loc ana in L new}
 Denote $\beta = \theta(\mathfrak t)$. Apply the same procedure as in Item (2) of Construction \ref{consalphan}, choose the analogous elements $\beta_n \in L$.
Then
$$\hat{L}^{\hat{G}\dla} =\cup_{n \geq 1} K({\mu_{r(n)}, \pi_{r(n)}})\dacc{ \beta-\beta_n }_n. $$
\end{prop}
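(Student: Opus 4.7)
The plan is to adapt the proof of Proposition \ref{loc ana in L} (due to Berger--Colmez, \cite[Prop.~4.12]{BC16}) by substituting $\beta = \theta(\mathfrak t)$ for $\alpha$ throughout. The essential algebraic input in the original argument is the identity $\nabla_\tau(\alpha)=1$, which makes $\alpha$ a local coordinate for the $\tau$-direction. For $\beta$, I will need $\nabla_\tau(\beta)$ to be a nonzero element of $\hat L^{\hat G\dla}$ --- hence invertible, since this is a field.

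First I would verify $\nabla_\tau(\beta) \ne 0$. Using the identity $N_\nabla(\mathfrak t) = -\mathfrak t\cdot u\lambda'$ established inside $(\wt{\mathbf{B}}_{\rig,L}^\dagger)^{\hat G\dpa}$ during the proof of Corollary \ref{corkill}, and noting that $\theta$ is $\hat G$-equivariant (so commutes with $\nabla_\tau$ and hence with $N_\nabla$), applying $\theta$ gives $N_\nabla(\beta) = -\beta\cdot\theta(u\lambda')$; combined with \eqref{eqnnring} this yields $\nabla_\tau(\beta) = -p\beta^2\theta(u\lambda')$ (with an extra factor of $p$ when $K_\infty\cap K_{p^\infty}=K(\pi_1)$). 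A direct calculation using the recursion $\lambda = (E(u)/E(0))\,\varphi(\lambda)$ together with $E(\pi)=0$ and $E'(\pi)\ne 0$ gives $\theta(u\lambda') = \pi\cdot(E'(\pi)/E(0))\cdot\theta(\varphi(\lambda))$, which is nonzero because each factor $E(\pi^{p^n})$ ($n\geq 1$) defining $\theta(\varphi(\lambda))$ is nonzero.

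With $\delta := \nabla_\tau(\beta)$ invertible in $\hat L^{\hat G\dla}$, I would set $\tilde\nabla := \delta^{-1}\nabla_\tau$, a derivation satisfying $\tilde\nabla(\beta)=1$, and mirror the proof of Proposition \ref{loc ana in L}: for $x \in \hat L^{\hat G_n\dan}$, put
\[
y_i := \sum_{k\geq 0}(-1)^k(\beta-\beta_n)^k\,\tilde\nabla^{k+i}(x)\binom{k+i}{k},
\]
and verify for $m\geq n$ sufficiently large that $y_i \in \hat L^{\hat G_m\dan}$, that $x = \sum_{i\geq 0} y_i(\beta-\beta_n)^i$, and that $\tilde\nabla(y_i)=0$. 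The last identity gives $\nabla_\tau(y_i)=0$, which via Corollary \ref{corkill} forces $\nabla_\gamma(y_i)=0$ as well; thus $y_i$ is killed by $\Lie\hat G$ and is $\hat G_m$-analytic, so it is fixed by an open subgroup of $\hat G$ and lies in $K(\mu_{r(m)},\pi_{r(m)})$ after possibly enlarging $m$. The reverse containment is immediate from the $\hat G_{r(n)}$-analyticity of $\beta-\beta_n$ and its norm bound.

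The hard part will be controlling the $\hat G_m$-analytic norms of $\delta^{-1}$ and of the iterates $\tilde\nabla^{k+i}(x)$ finely enough to land inside $K(\mu_{r(n)},\pi_{r(n)})\dacc{\beta-\beta_n}_n$ with the prescribed decay $p^{nk}\|y_k\|_{\hat G_m}\to 0$. In the Berger--Colmez proof this bookkeeping is automatic because $\nabla_\tau(\alpha)=1$ trivializes the rescaling; here one must absorb the analyticity radius of $\delta^{-1}$ into the choice of $m$ and re-index $\{r(n)\}$ accordingly, which is precisely what is implicitly built into the ``apply the same procedure'' clause in the statement.
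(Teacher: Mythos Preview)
Your proposal is correct in outline and would go through with careful bookkeeping, but the paper takes a simpler route that sidesteps exactly the difficulty you flag at the end.

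The key difference is the choice of direction in $\Lie\hat G$. You stay in the $\tau$-direction, as in Berger--Colmez's original proof, and normalize by $\delta^{-1}=(\nabla_\tau(\beta))^{-1}$ to force $\tilde\nabla(\beta)=1$. The paper instead switches to the $\gamma$-direction: since $\gamma(\mathfrak t)=\chi(\gamma)\mathfrak t$, one has $\nabla_\gamma(\beta)=\beta$, so the operator $\partial_\gamma:=\beta^{-1}\nabla_\gamma$ of Definition~\ref{defnablaforL} already satisfies $\partial_\gamma(\beta)=1$. One then runs the identical Taylor-expansion argument with $\partial_\gamma$ in place of your $\tilde\nabla$, obtaining coefficients $z_i$ with $\nabla_\gamma(z_i)=0$, hence $z_i\in K(\mu_m,\pi_m)$.

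What this buys: the normalizing factor is just $\beta^{-1}$, an element already known to lie in $\hat L^{\hat G\dla}$ with $\nabla_\gamma(\beta^{-1})=-\beta^{-1}$, so the iterates $\partial_\gamma^k$ have transparent norm behaviour. Your $\delta^{-1}$ is more complicated and its $\nabla_\tau$-iterates do not simplify, which is precisely why you need the extra re-indexing at the end. Your approach works, but the paper's choice of $\partial_\gamma$ makes the analytic estimates automatic. Incidentally, your appeal to Corollary~\ref{corkill} to pass from $\nabla_\tau(y_i)=0$ to $\nabla_\gamma(y_i)=0$ is unnecessary: $\nabla_\tau(y_i)=0$ alone already gives $y_i\in K(\mu_m,\pi_m)$ by the same mechanism as in the proof of Proposition~\ref{loc ana in L}.
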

\begin{proof}
Recall   in Def. \ref{defnablaforL}, we defined $\partial_\gamma: =\frac{\nabla_\gamma}{\beta}$. Since $\nabla_\gamma(\beta)=\beta$, we have 
\begin{equation} \label{eqnablabeta}
\partial_\gamma (\beta - \beta_n) =\partial_\gamma (\beta)=1.
\end{equation}
This is the key analogue of Eqn. \eqref{eqnablaalpha}.
Now similar to the proof in Prop. \ref{loc ana in L}, suppose $x\in \hat{L}^{\hat{G}_n\dan}$, we can define
$$ z_i = \sum_{k \geq 0} (-1)^k (\beta - \beta_n)^k \partial_\gamma ^{k+i}(x) \binom{k+i}{k},$$
then there exists $m\geq n$ such that $z_i \in \hat{L}^{\hat{G}_m\dan}$, and
\begin{equation} \label{eqxbeta}
x = \sum_{i \geq 0} z_i (\beta - \beta_n)^i \in \hat{L}^{\hat{G}_m\dan}
\end{equation}
Finally, $\partial_\gamma(z_i)=0$ implies that $\nabla_\gamma(z_i)=0$ and hence $z_i\in K(\mu_m, \pi_m)$.
\end{proof}


\subsection{Kummer tower and Sen theory} \label{subsecKS}

 \begin{theorem}\label{thm331kummersenmod}
 Given  $W\in \rep_\gk(C)$ of dimension $d$, define
 \begin{equation*}
 D_{\Sen, \kinfty}(W):= (W^{G_L})^{\tau\dla, \gamma=1}.
 \end{equation*}
 Then this is a $\kinfty$-vector space of dimension $d$, and there are \emph{identifications} (cf. Convention \ref{conv:identification}):
 \begin{equation}\label{eqkpk}
 D_{\Sen, \kinfty}(W) \otimes_\kinfty (\hat{L})^{\hat{G}\dla} = D_{\Sen, \kpinfty}(W) \otimes_\kpinfty (\hat{L})^{\hat{G}\dla}=(W^{G_L})^{\hat{G}\dla}.
 \end{equation}
 \end{theorem}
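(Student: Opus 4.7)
The strategy is to identify the ambient space $V := (W^{G_L})^{\hat{G}\dla}$ as a free $(\hat{L})^{\hat{G}\dla}$-module of rank $d$ in which $D_{\Sen,\kpinfty}(W)$ and $D_{\Sen,\kinfty}(W)$ appear as a $\kpinfty$- and $\kinfty$-form, respectively. The whole argument then splits into (i) describing $V$ in terms of classical Sen theory, which yields the first identification in \eqref{eqkpk}, and (ii) descending $V$ to a $\kinfty$-form along $\gal(L/\kinfty)$, which yields the second.

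For (i), I would prove $D_{\Sen,\kpinfty}(W) \otimes_\kpinfty (\hat{L})^{\hat{G}\dla} = V$. The inclusion $\subseteq$ uses that by classical Sen theory every $v \in D_{\Sen,\kpinfty}(W)$ is $K$-finite, hence lies in a finite-dimensional $\kpinfty$-subspace of $W^{G_L}$ stable under $G_K$; a continuous $\hat{G}$-representation on a finite-dimensional $\Qp$-Banach space is automatically $\hat{G}$-analytic, so $v \in V$. Combined with the Galois descent $W^{G_\kpinfty}\otimes_{\hatkpinfty}\hat{L} = W^{G_L}$, a $\kpinfty$-basis $e_1,\dots,e_d$ of $D_{\Sen,\kpinfty}(W)$ is simultaneously an $\hat{L}$-basis of $W^{G_L}$, and the injection follows. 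For the reverse containment, given $v \in V$ with expansion $v = \sum g_i e_i$ and $g_i \in \hat{L}$, I would choose $n$ so that $v$ and all $e_i$ are $\hat{G}_n$-analytic on a common subgroup, apply the Taylor-style development in $\beta-\beta_n$ of Prop. \ref{loc ana in L new} to $v$, and read off each coefficient $g_i$ via the dual basis of $\{e_i\}$, concluding $g_i \in (\hat{L})^{\hat{G}\dla}$.

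For (ii), I first observe that $D_{\Sen,\kinfty}(W) = V^{\gamma = 1}$: one direction is Notation \ref{notataula}(2), and the other uses that any element of $V$ is automatically $\tau$-la (being $\hat{G}$-la). To show $V^{\gamma=1}$ is a $\kinfty$-vector space of rank $d$ with $V^{\gamma=1}\otimes_\kinfty (\hat{L})^{\hat{G}\dla} = V$, I would carry out a Hilbert 90 style descent for the semi-linear $\gal(L/\kinfty)$-action on the free $(\hat{L})^{\hat{G}\dla}$-module $V$, relying on the coefficient identity $((\hat{L})^{\hat{G}\dla})^{\gamma=1} = \kinfty$, which itself follows from Prop. \ref{loc ana in L}(3) together with $(\hat{L})^{\hat{G}\dla}\subseteq \hat{L}^{\tau\dla}$. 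The main obstacle I expect is making this descent work uniformly when $p=2$ and $\kinfty\cap\kpinfty=K(\pi_1)$: then $\gal(L/\kinfty)$ has $2$-torsion, so $\gamma=1$ is not captured infinitesimally by $\nabla_\gamma=0$ alone. The cleanest treatment is to split off the finite torsion part of $\gal(L/\kinfty)$, handled by a finite Galois descent to $K(\pi_1)$, from the pro-$p$ part, handled by analytic Hilbert 90 via $\nabla_\gamma$. A secondary delicate point is the coordinate extraction in (i), where one must verify that the Taylor-style coefficients genuinely land in $(\hat{L})^{\hat{G}\dla}$ rather than some larger analytic space on a smaller open subgroup.
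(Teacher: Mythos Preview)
Your part (i) is essentially what the paper does: the identification $D_{\Sen,\kpinfty}(W)\otimes_{\kpinfty}(\hat L)^{\hat G\dla}=(W^{G_L})^{\hat G\dla}$ is quoted directly from \cite[Prop.~3.1.6]{GP}, and your proposed argument (Sen basis is $K$-finite hence locally analytic, and coordinate extraction via the dual basis) is a valid reproof of that proposition. One small correction: the Taylor development in $\beta-\beta_n$ is not needed here; coordinate extraction follows from the general fact that if $e_1,\dots,e_d\in V$ form a basis of the ambient Banach module and are themselves analytic, then the coordinate functionals preserve analyticity (this is \cite[Prop.~2.3]{Ber16} or \cite[Lem.~3.1.5]{GP}).

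Your part (ii) departs from the paper, and this is where a gap appears. You propose a direct ``Hilbert 90 for $\gal(L/\kinfty)$ acting on $V$'', splitting off torsion and handling the pro-$p$ part ``via $\nabla_\gamma$''. The paper instead runs a \emph{two-stage descent in the opposite order}: first a \emph{monodromy descent}, solving the differential equation $\partial_\gamma(H)+D_\gamma H=0$ explicitly by the series $H=\sum_{k\ge 0}(-1)^k D_k(\beta-\beta_n)^k/k!$ (this is where Prop.~\ref{loc ana in L new} is actually used), which yields the $L$-form $X:=V^{\nabla_\gamma=0}$; second, since $X$ is finite-dimensional over $L=\bigcup_n K(\pi_n,\mu_n)$ with $\gal(L/\kinfty)$ topologically finitely generated, $X$ descends to some $X_n$ over $K(\pi_n,\mu_n)$, and one applies \emph{classical} Hilbert 90 for the \emph{finite} extension $K(\pi_n,\mu_n)/K(\pi_n)$. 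Your ``analytic Hilbert 90 via $\nabla_\gamma$'', once unpacked, is exactly this monodromy descent --- but it lands in $L$, not in $\kinfty$, because $(\hat L)^{\hat G\dla,\nabla_\gamma=0}=L$ (Prop.~\ref{loc ana in L}(2)); killing $\nabla_\gamma$ only trivializes an \emph{open} subgroup of the $\gamma$-action, so a further finite-level étale descent is still required. Your sketch elides this, and the phrase ``finite Galois descent to $K(\pi_1)$'' for the torsion is misplaced (the relevant finite step is $K(\pi_n,\mu_n)/K(\pi_n)$, not anything involving $K(\pi_1)$). If you reorganize (ii) as ``monodromy descent to $L$, then finite étale descent to $\kinfty$'', you recover exactly the paper's proof.
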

 \begin{proof}
We study $D_{\Sen, \kinfty}(W)= (W^{G_L})^{\tau\dla, \gamma=1}$ in two steps. In Step 1, we show $(W^{G_L})^{\hat{G}\dla, \nabla_\gamma=0}$ is of dimension $d$ over $L$, this is achieved via a monodromy descent. In Step 2, via an (easy) \'etale descent, we further show the ($\gamma=1$)-invariant $D_{\Sen, \kinfty}(W)$ has dimension $d$. 
 
Step 1 (monodromy descent). We claim the following:
\begin{itemize}
\item Let $M$ be a $\hat{L}^{\hat{G}\dla}$-vector space of dimension $d$ with a semi-linear and locally analytic $\hat{G}$-action. Then the subspace $M^{\nabla_\gamma=0}$ is a   $L$-vector space of dimension $d$ such that
\begin{equation}
M^{\nabla_\gamma=0} \otimes_L \hat{L}^{\hat{G}\dla} =M
\end{equation}
\end{itemize}
 This is a ``$\theta$-specialization" of the argument \cite[Rem. 6.1.7]{GP}, hence the proof is practically verbatim. 
 To proceed, as in Prop. \ref{loc ana in L new}, we denote $\beta=\theta(\mathfrak{t})$. There, we also made use of the operator
  $\partial_\gamma =\frac{1}{\beta}  \nabla_\gamma$, which is precisely $\theta$-specialization  of the operator (with same notation) in \cite[5.3.4]{GP}.
 Choose a basis of $M$, and let  $D_\gamma=\Mat(\partial_\gamma)$, then it suffices to show that there exists $H\in \GL_d(\hat{L}^{\hat{G}\dla})$ such that
 \begin{equation}\label{eqdgamma}
 \partial_{\gamma}(H)+D_{\gamma}H = 0
 \end{equation} 
For $k \in \mathbb N$, let $D_k = \Mat(\partial_{\gamma}^k)$. For $n$ large enough, the series given by
$$H = \sum_{k \geq 0}(-1)^kD_k\frac{(\beta-\beta_n)^k}{k!}$$
converges   to the desired  solution of \eqref{eqdgamma}. Here, $\beta-\beta_n$ is used as a ``variable" just as in the proof of Prop. \ref{loc ana in L new}.
  
  Step 2 (etale descent). 
  By \cite[Prop. 3.1.6]{GP}, we know  
  \begin{equation}\label{eq316first}
  (W^{G_L})^{\hat{G}\dla} =D_{\Sen, \kpinfty}(W) \otimes_\kpinfty (\hat{L})^{\hat{G}\dla}.
  \end{equation}
 Apply Step 1 to the above vector space, and  so 
 $$X:=(W^{G_L})^{\hat{G}\dla, \nabla_\gamma=0}$$
  is a vector space over $L$ of  dimension $d$. In addition, $X$ is stable under $\gal(L/\kinfty)$-action as this action commutes with $\nabla_\gamma$. (Note however $\tau$-action does not commute with $\nabla_\gamma$, not even on the ring level: for example $\tau \nabla_\gamma(u)=0 \neq \nabla_\gamma \tau(u)$.) 
  In summary, $X$ is a $L$-vector space with a $\gal(L/\kinfty)$-action. Note $L=\cup_n K(\pi_n, \mu_n)$ and $\gal(L/\kinfty)$ is topologically finitely generated. Thus, for $n\gg 0$, $X$ descends to some $\gal(L/\kinfty)$-stable  vector space  $X_n$ over $K(\pi_n, \mu_n)$. 
 By Galois descent,   $X_n^{\gamma=1}$ is a $K(\pi_n)$-vector space of dimension $d$, and hence $X_n^{\gamma=1}\otimes_{K(\pi_n)} \kinfty$ is precisely the  desired $D_{\Sen, \kinfty}(W)$. Finally, apply \cite[Prop. 3.1.6]{GP} again, then we have
 \begin{equation}
 (W^{G_L})^{\hat{G}\dla} =D_{\Sen, \kinfty}(W) \otimes_\kinfty (\hat{L})^{\hat{G}\dla},
 \end{equation}
 which together with \eqref{eq316first} proves \eqref{eqkpk}. 
 \end{proof}

Let $W \in \rep_\gk(C)$, since $D_{\Sen, \kinfty}(W) $ are locally analytic vectors, we can define (cf. Def. \ref{defnablaforL})
\begin{equation}\label{eq322tau}
N_\nabla: D_{\Sen, \kinfty}(W)  \to (W^{G_L})^{\hat{G}\dla}
\end{equation}

\begin{theorem}\label{thmkummersenop}
Let $W \in \rep_\gk(C)$, then Eqn. \eqref{eq322tau}, after linear scaling,  induces a $\kinfty$-linear operator, which we call the \emph{Sen operator over the Kummer tower}
\begin{equation}\label{eqnnablanorm}
\frac{1}{\theta(u\lambda')}\cdot N_\nabla: D_{\Sen, \kinfty}(W) \to D_{\Sen, \kinfty}(W).
\end{equation}
(We also sometimes use the simplified terminology ``$\kinfty$-Sen operator").
Extend it $C$-linearly to a $C$-linear operator on $ D_{\Sen, \kinfty}(W) \otimes_\kinfty C=W$, and denote it by the same notation:
\begin{equation}
\frac{1}{\theta(u\lambda')}\cdot N_\nabla:  W \to W
\end{equation}
Then this is precisely the (uniquely defined) \emph{Sen operator} in Eqn. \eqref{eqsenextendtoc}.
\end{theorem}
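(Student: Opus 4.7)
The plan is to realize the desired identity as an equality of two $(\hat{L})^{\hat{G}\dla}$-linear endomorphisms on the common space $(W^{G_L})^{\hat{G}\dla}$, and then to propagate it by $C$-linear extension along the identifications $W = (W^{G_L})^{\hat{G}\dla} \otimes_{(\hat{L})^{\hat{G}\dla}} C$ coming from Thm.~\ref{thm331kummersenmod} together with classical Sen theory. The key player is the combined operator $\Phi := \theta(u\lambda')\,\nabla_\gamma + N_\nabla$ acting on $(W^{G_L})^{\hat{G}\dla}$, which by Cor.~\ref{corkill} satisfies $\Phi(a) = 0$ for every scalar $a \in (\hat{L})^{\hat{G}\dla}$.

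First I would verify that $\frac{1}{\theta(u\lambda')}\,N_\nabla$ sends $D_{\Sen, \kinfty}(W) = (W^{G_L})^{\tau\dla,\,\gamma = 1}$ into itself and is $\kinfty$-linear. Preservation of $\tau$-local analyticity is immediate since $N_\nabla$ is a normalized Lie algebra operator, while preservation of the $\gamma = 1$ condition follows from the commutation $g\,N_\nabla = N_\nabla\,g$ for $g \in \gal(L/\kinfty)$ recorded in Def.~\ref{defndiffwtb}(1) (which uses $g\,\nabla_\tau = \chi(g)\,\nabla_\tau\,g$ combined with $g(\theta(\fkt)) = \chi(g)\,\theta(\fkt)$). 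The $\kinfty$-linearity is the first real application of Cor.~\ref{corkill}: for $a \in \kinfty$ one has $\nabla_\gamma(a) = 0$, so $N_\nabla(a) = -\theta(u\lambda')\,\nabla_\gamma(a) = 0$, and the Leibniz rule forces $N_\nabla(av) = a\,N_\nabla(v)$; division by the nonzero scalar $\theta(u\lambda') \in \kinfty$ then preserves both properties.

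Next I would compare the two $(\hat{L})^{\hat{G}\dla}$-linear endomorphisms $T_\kinfty$ and $T_\kpinfty$ of $(W^{G_L})^{\hat{G}\dla}$, defined as the $(\hat{L})^{\hat{G}\dla}$-linear extensions of $\frac{1}{\theta(u\lambda')}\,N_\nabla|_{D_{\Sen, \kinfty}(W)}$ and of $\nabla_\gamma|_{D_{\Sen, \kpinfty}(W)}$ respectively. The commutator identity $[\Phi, a] = \Phi(a) = 0$ from Cor.~\ref{corkill} shows that $\Phi$ is itself $(\hat{L})^{\hat{G}\dla}$-linear on $(W^{G_L})^{\hat{G}\dla}$, and we can evaluate it on each generating subspace. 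For $v \in D_{\Sen, \kinfty}(W)$, the condition $\gamma = 1$ gives $\nabla_\gamma(v) = 0$, so $\Phi(v) = N_\nabla(v) = \theta(u\lambda')\,T_\kinfty(v)$. For $e \in D_{\Sen, \kpinfty}(W)$, which sits classically inside $W^{G_\kpinfty}$, any lift of $\tau$ to $G_\kpinfty$ fixes $e$, hence $\nabla_\tau(e) = 0$, so $N_\nabla(e) = 0$ and $\Phi(e) = \theta(u\lambda')\,\nabla_\gamma(e) = \theta(u\lambda')\,T_\kpinfty(e)$. Since $\Phi$, $T_\kinfty$, and $T_\kpinfty$ are all $(\hat{L})^{\hat{G}\dla}$-linear and agree on generating sets, one concludes $\theta(u\lambda')\,T_\kinfty = \Phi = \theta(u\lambda')\,T_\kpinfty$, i.e., $T_\kinfty = T_\kpinfty$ on all of $(W^{G_L})^{\hat{G}\dla}$.

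The final step is formal: base-changing the equality $T_\kinfty = T_\kpinfty$ along $(W^{G_L})^{\hat{G}\dla} \otimes_{(\hat{L})^{\hat{G}\dla}} C = W$ yields, on one side, the $C$-linear extension of the Kummer-Sen operator from $D_{\Sen, \kinfty}(W)$ and, on the other, the classical $C$-linear extension of the Sen operator from $D_{\Sen, \kpinfty}(W)$, whence the characteristic polynomial and semi-simplicity assertions follow at once. The delicate point in the argument is the vanishing $N_\nabla|_{D_{\Sen, \kpinfty}(W)} = 0$, which relies on the classical Sen module being produced inside $W^{G_\kpinfty}$ (not merely in the larger $W^{G_L}$) so that $\tau$ acts trivially on it; this is where I expect the only real technical content to lie, though once granted, everything else is a mechanical deployment of Cor.~\ref{corkill} and the tensor identifications of Thm.~\ref{thm331kummersenmod}.
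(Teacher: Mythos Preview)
Your proposal is correct and follows essentially the same approach as the paper: both introduce the combined operator (your $\Phi = \theta(u\lambda')\nabla_\gamma + N_\nabla$, the paper's $\mathfrak{a} = \nabla_\gamma + \frac{1}{\theta(u\lambda')}N_\nabla$), observe via Cor.~\ref{corkill} that it is $(\hat L)^{\hat G\dla}$-linear, and then evaluate it on each of the two generating Sen modules where one summand vanishes. The only cosmetic difference is that the paper obtains $\kinfty$-linearity directly from the fact that $\nabla_\tau$ kills $\kinfty$ (elements of $\kinfty$ being smooth under $\tau$), rather than routing through Cor.~\ref{corkill} as you do.
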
 

 \begin{proof}
 Note we have the relation  $gN_\nabla=N_\nabla g, \forall g\in \gal(L/\kinfty)$, hence $N_\nabla$ stabilizes $D_{\Sen, \kinfty}(W)$. 
 Furthermore, $\theta(u\lambda') \in \kinfty$, hence $\frac{1}{\theta(u\lambda')}\cdot N_\nabla$ still stabilizes $D_{\Sen, \kinfty}(W)$. Thus \eqref{eqnnablanorm} is well-defined.
 This is a $\kinfty$-linear operator  because $\nabla_\tau$ hence $\frac{1}{\theta(u\lambda')}\cdot N_\nabla$ kills $\kinfty$.
 
By Cor. \ref{corkill}, 
$\mathfrak{a}= \nabla_\gamma +\frac{1}{\theta(u\lambda')} N_\nabla$ is an $(\hat{L})^{\hat{G}\dla}$-\emph{linear} operator  on both sides of the following:
  \begin{equation*}
 D_{\Sen, \kinfty}(W) \otimes_\kinfty (\hat{L})^{\hat{G}\dla} =D_{\Sen, \kpinfty}(W) \otimes_\kpinfty (\hat{L})^{\hat{G}\dla}.
 \end{equation*}
 On the right hand side,  $N_\nabla$  kills $ D_{\Sen, \kpinfty}(W) $ and hence $\mathfrak{a} =\nabla_\gamma$ is precisely the $(\hat{L})^{\hat{G}\dla}$-linear extension of the Sen operator in Eqn. \eqref{eqsenclassical}. Similarly, on the left hand side, $\nabla_\gamma$ kills $ D_{\Sen, \kinfty}(W)$, and hence $\mathfrak{a}$ is the same as the $(\hat{L})^{\hat{G}\dla}$-linear extension of \eqref{eqnnablanorm}. We can  conclude by further extending $C$-linearly.
 \end{proof}

\begin{cor}
The two operators $\frac{1}{\theta(u\lambda')}\cdot N_\nabla$ in Eqn. \eqref{eqnnablanorm} and $\nabla_\gamma$ in Eqn. \eqref{eqsenclassical} have the same eigenvalues, and have the same semi-simplicity property.
\end{cor}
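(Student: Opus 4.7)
The plan is to deduce the corollary directly from Theorem \ref{thmkummersenop}, which has already done the substantive work: it identifies the $C$-linear extensions of the two operators to $W$ as literally the same $C$-linear endomorphism. Given this identification, the statement about eigenvalues and about semi-simplicity is a formal consequence of the fact that both invariants are stable under extension of the base field.

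More precisely, I would first recall that for a finite-dimensional vector space $V$ over a field $k$, a $k$-linear endomorphism $T\colon V\to V$, and any field extension $K/k$, the characteristic polynomial and the minimal polynomial of $T\otimes_k K$ on $V\otimes_k K$ agree with those of $T$ on $V$ (the characteristic polynomial is computed from any matrix representative, and the minimal polynomial is the monic generator of $\ker(k[X]\to\End_k(V))$, which is compatible with $-\otimes_k K$ since $V$ is finite-dimensional). Applying this along $\kpinfty\subset C$ to $\nabla_\gamma$ on $D_{\Sen,\kpinfty}(W)$, and along $\kinfty\subset C$ to $\frac{1}{\theta(u\lambda')}\cdot N_\nabla$ on $D_{\Sen,\kinfty}(W)$, one sees that the characteristic and minimal polynomials of each operator are unchanged when passing to its $C$-linear extension $W = D_{\Sen,\kpinfty}(W)\otimes_{\kpinfty} C = D_{\Sen,\kinfty}(W)\otimes_{\kinfty} C$.

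By Theorem \ref{thmkummersenop}, those two $C$-linear extensions are equal as endomorphisms of $W$. Hence the characteristic polynomials of $\nabla_\gamma$ and of $\frac{1}{\theta(u\lambda')}\cdot N_\nabla$ coincide (as polynomials whose coefficients in fact lie in $\kpinfty\cap\kinfty$, though this is not needed), and likewise their minimal polynomials coincide. Since eigenvalues are by definition the roots of the characteristic polynomial in an algebraic closure, the two operators have the same eigenvalues. Since we are in characteristic zero, semi-simplicity is equivalent to the minimal polynomial being separable (i.e.\ having no repeated roots), a property intrinsic to the polynomial itself, so the two operators are simultaneously semi-simple or not.

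There is essentially no obstacle here, since all the analytic difficulty is already absorbed into Theorem \ref{thmkummersenop}; the corollary is only a book-keeping statement about invariance of characteristic and minimal polynomials under extension of scalars.
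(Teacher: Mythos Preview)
Your proposal is correct and takes essentially the same approach as the paper: the corollary is stated without proof there, being an immediate consequence of Theorem~\ref{thmkummersenop}, and you have simply spelled out the standard linear-algebra fact (invariance of characteristic and minimal polynomials under field extension) that the paper leaves implicit.
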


\begin{notation}
Let $F$ be a field. Write $\mathrm{Endo}_F$ for the category consisting of $(M, f)$ where $M$ is a
   finite dimensional $F$-vector space and $f: M\to M$ is an $F$-linear endomorphism.
\end{notation}  
  
  \begin{prop} \label{propisomobj}
Let $W_1, W_2 \in \rep_\gk(C)$. 
By abuse of notation, we use $\phi_\mathrm{Sen}$ to denote the Sen operators on $D_{\Sen, \kpinfty}(W_i)$ as well as on $W_i$, cf. Notation \ref{notaSenop}.
Similarly, we use   $\phi_{\kinfty-\mathrm{Sen}}$ to denote the $\kinfty$-Sen operators on $D_{\Sen, \kinfty}(W_i)$, cf. Thm. \ref{thmkummersenop}.
The following are equivalent.
\begin{enumerate}
\item $W_1$ and $W_2$ are isomorphic as objects in $\rep_\gk(C)$.
\item   $(D_{\Sen, \kpinfty}(W_1), \phi_\mathrm{Sen})$ and $(D_{\Sen, \kpinfty}(W_2), \phi_\mathrm{Sen})$ are isomorphic as objects in $\mathrm{Endo}_\kpinfty$.
\item   $(W_1,  \phi_\mathrm{Sen})$ and $(W_2,  \phi_\mathrm{Sen})$ are isomorphic as objects in $\mathrm{Endo}_C$.
\item    $(D_{\Sen, \kinfty}(W_1), \phi_{\kinfty-\mathrm{Sen}})$ and $(D_{\Sen, \kinfty}(W_2), \phi_{\kinfty-\mathrm{Sen}})$  are isomorphic as objects in $\mathrm{Endo}_\kinfty$.
\end{enumerate}
\end{prop}
\begin{proof}
Clearly, (1) implies (2)-(4). (2) obviously implies (3); (4) implies (3) by  Thm. \ref{thmkummersenop}.
The implication from (3) to (1) is proved in \cite[p. 101, Thm. 7]{Sen80}.
\end{proof}


\section{Hodge-Tate prismatic crystals} \label{sHT}
 
In \S \ref{s41ht} and \S \ref{subsec42}, we   review results of Min-Wang  about Hodge-Tate prismatic crystals and their relation with  stratifications. In \S \ref{subsecHTsen}, we show that a linear operator appearing in the stratification, upon scaling, is precisely our \emph{$\kinfty$-Sen operator} constructed in \S \ref{subsecKS}. This makes it possible to classify rational Hodge-Tate prismatic crystals by nearly Hodge-Tate representations.
In \S \ref{subsec44}, we make comparison with a theorem of Sen.

 \subsection{Hodge-Tate prismatic crystals}\label{s41ht}
 
  \begin{defn}[Hodge--Tate crystals]\label{Dfn-Hodge--Tate crystal} \cite[Def. 3.1]{MW}
  \begin{enumerate}
  \item   A \emph{Hodge-Tate prismatic crystal} on $(\calO_K)_{\Prism}$ is a sheaf $\bM$ of $\overline \calO_{\Prism}$-modules such that for any $(A,I)\in(\calO_K)_{\Prism}$, $\bM((A,I))$ is a finite projective $A/I$-module and that for any morphism $(A,I)\to (B,J)$, the natural map
    \[\bM((A,I))\otimes_A B\to \bM((B,J)).\]
      is an isomorphism.
    Denote by $\Vect((\calO_K)_{\Prism},\overline \calO_{\Prism})$ the category of such objects.
    
    \item The category $\Vect((\calO_K)_{\Prism},\overline \calO_{\Prism}[1/p])$ is defined analogously using the sheaf $\calO_{\Prism}[1/p]$: e.g., $\bM((A,I))$ is a finite projective $(A/I)[1/p]$-module.
  \end{enumerate}
  \end{defn}

 \begin{defn}[Stratification]\label{Dfn-stratification} cf. \cite[Def. 3.14]{MT} or \cite[Def. 3.2]{MW}.
    Let $(B,I) \in \okprism$, and let  $(B^{\bullet},IB^{\bullet})$ be  the associated co-simplicial prism. Let $p_0, p_1: B \to B^1$, $p_{01}, p_{02}, p_{12}: B^1 \to B^2$ be the canonical morphisms.
      Let $M$ be a $B$-module. 
\begin{enumerate}
\item A \emph{stratification} on $M$ is a $B^1$-linear isomorphism \[\varepsilon: M\otimes_{B,p_1}B^1\to M\otimes_{B,p_0}B^1.\]
\item Say a stratification $\varepsilon$ satisfies the  \emph{cocycle condition} if 
$$p^*_{01}(\varepsilon)p^*_{12}(\varepsilon) = p_{02}^*(\varepsilon).$$
\end{enumerate}      
  \end{defn}

The Breuil-Kisin prism $(\gs, E)$ is a cover of the final object of $\mathrm{Shv}((\ok)_\prism)$ (cf. \cite[Lem. 2.2]{MW}), hence by a well-known argument, e.g., \cite[Cor. 3.9]{MT}, the category of Hodge-Tate prismatic crystals   is equivalent to the category of finite free $\calO_K$-modules equipped with stratifications (with respect to the prism $(\gs, E)$) satisfying the  cocycle condition.
  
To compute the stratification explicitly, one  needs to know the structures of the rings $\gs^i/E$.
  Note for any $i\geq 0$, we have
  \[\gs^{i+1} = W(k)[[u_0,\dots,u_i]]\{\frac{u_0-u_1}{E(u_0)},\dots,\frac{u_0-u_i}{E(u_0)}\}_{\delta}^{\wedge_{(p,E(u_0))}},\]
which is the $(p, E(u_0))$-adic completion of the $\delta$-ring $W(k)[[u_0,\dots,u_i]]\{\frac{u_0-u_1}{E(u_0)},\dots,\frac{u_0-u_i}{E(u_0)}\}_{\delta}$. This ring is extremely complicated;  fortunately, it becomes very simple after modulo $E$.

\begin{lem}\label{pd polynomial} \cite[Lem. 2.7]{MW}
  For any $1\leq i\leq n$, denote $X_i$ the image of $\frac{u_0-u_i}{E(u_0)}\in \gs^{n}$ modulo $(E)$, then 
  $$\gs^n/(E) \simeq \calO_K\{X_1,\dots, X_n\}^{\wedge}_{\rm pd}$$
  where the right hand side is the free pd-polynomial ring on the variables $X_1,\dots, X_n$.   
  \end{lem}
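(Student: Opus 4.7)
The strategy is to invoke the Bhatt--Scholze theorem identifying prismatic envelopes modulo the distinguished element with $p$-completed free divided-power algebras. Specifically, for a bounded prism $(A, (d))$ and a sequence $(x_1, \ldots, x_r) \in A$ such that $(d, x_1, \ldots, x_r)$ is Koszul-regular in $A$, one has a canonical isomorphism
$$A\{x_1/d, \ldots, x_r/d\}_\delta^{\wedge_{(p,d)}}/(d) \;\simeq\; \bigl(A/(d, x_1, \ldots, x_r)\bigr)\{X_1, \ldots, X_r\}_{\mathrm{pd}}^\wedge,$$
under which $x_i/d$ corresponds to the $i$-th free divided-power generator $X_i$; this is a variant of \cite[Cor.~2.39, Prop.~3.13]{BSprism}.

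In our setting, the construction recalled above the lemma presents $\gs^n$ as precisely such a prismatic envelope: take $A = W(k)\llb u_0, \ldots, u_n\rrb$ (a bounded prism with $\varphi(u_i) = u_i^p$), $d = E(u_0)$, and $x_i = u_0 - u_i$. The Koszul-regularity of the augmented sequence $(E(u_0), u_0-u_1, \ldots, u_0-u_n)$ is elementary: $E(u_0)$ is a non-zerodivisor (being Eisenstein in $u_0$), and modulo $E(u_0)$ the remaining elements become $(\pi-u_1, \ldots, \pi-u_n)$, a regular system of parameters in the regular local ring $\O_K\llb u_1,\ldots,u_n\rrb$. Moreover, the quotient $A/(E(u_0), u_0-u_1, \ldots, u_0-u_n)$ is canonically $\O_K$ via $u_i \mapsto \pi$. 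Plugging these facts into the displayed isomorphism yields immediately $\gs^n/(E) \simeq \O_K\{X_1, \ldots, X_n\}_{\mathrm{pd}}^\wedge$, with the free PD generators being the images of $X_i = (u_0-u_i)/E(u_0)$, as asserted.

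The main obstacle is simply the invocation of the Bhatt--Scholze theorem in the above form. If one wishes to avoid this black box, a direct argument would proceed by combining the $\delta$-identity $\varphi(X_i) = X_i^p + p\delta(X_i)$ with the derived relation $\varphi(X_i)\cdot\varphi(E(u_0)) = u_0^p - u_i^p$: modulo $E(u_0)$ the right-hand side vanishes (since $u_0 \equiv u_i \equiv \pi$) while $\varphi(E(u_0)) \equiv E(\pi^p)$ is a non-zerodivisor in $\O_K$, forcing $X_i^p + p\delta(X_i) \equiv 0 \pmod{E(u_0)}$. This is exactly the PD-defining divisibility $X_i^p = -p\delta(X_i)$, and iterating on higher $\delta$-powers of $X_i$ produces the full divided-power structure. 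The delicate point of such a direct approach is to track the $(p, E(u_0))$-adic topology carefully and to verify freeness, so as to conclude that the natural map is an isomorphism rather than merely a surjection; invoking the Bhatt--Scholze theorem circumvents this bookkeeping entirely.
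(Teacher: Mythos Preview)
The paper does not supply its own proof of this lemma; it is simply quoted from \cite[Lem.~2.7]{MW}. Your argument is correct and is the natural one: $\gs^n$ is by construction the prismatic envelope of the $\delta$-ring $W(k)\llb u_0,\dots,u_n\rrb$ (with $\varphi(u_i)=u_i^p$) along the ideal $(E(u_0),u_0-u_1,\dots,u_0-u_n)$, the Koszul-regularity check is elementary exactly as you indicate, and the Bhatt--Scholze identification of such a prismatic envelope modulo the distinguished element with the $p$-completed PD-envelope yields the free PD-polynomial ring over $A/(E(u_0),u_0-u_1,\dots,u_0-u_n)=\O_K$. Your sketch of the direct $\delta$-ring computation (forcing $X_i^p\equiv -p\,\delta(X_i)$ from $\varphi(X_i)\varphi(E(u_0))=u_0^p-u_i^p$) is also correct as a heuristic, and is in fact how the relevant Bhatt--Scholze comparison is verified in concrete cases; as you note, promoting it to a full proof requires bookkeeping on the completion and on freeness, which the black-box citation avoids.
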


Thus, for $M$   a finite free $\calO_K$-module, a stratification with respect to $(\gs, E)$ can be written explicitly. Indeed, for $\underline{e}=(e_1,\dots,e_l)$   an $\calO_K$-basis of $M$,  a stratification can be expressed via
 \begin{equation} 
 \varepsilon(\underline e) = \underline e\cdot \sum_{n\geq 0}A_n\frac{X^n}{n!},
 \end{equation} 
  where  each $A_n$ is a matrix over $\ok$.
We now determine all such possible $A_n$.

\begin{prop}\label{matrix of stratification} cf. \cite[Thm. 3.5, Rem. 3.7]{MW}
Let $M$ be  a finite free $\calO_K$-module with a basis $\underline e$. Then a rule 
 \begin{equation} \label{eqstrat}
 \varepsilon(\underline e) = \underline e\cdot \sum_{n\geq 0}A_n\frac{X^n}{n!} 
 \end{equation} 
  determines a stratification satisfying the cocycle condition if and only if 
\begin{itemize}
\item $A_0 = I$, and 
\item $A_{n+1} = \prod_{i=0}^n(iE'(\pi)+A_1)$ for each $n \geq 0$, and converges to zero as $n \to \infty$.
\end{itemize}
If the above is satisfied, Eqn. \eqref{eqstrat} can be re-written as
\begin{equation}\label{eq413rewritten}
\varepsilon(\underline e) = \underline e\cdot (1-E'(\pi)X)^{-\frac{A_1}{E'(\pi)}}
\end{equation}
where $E'(\pi)$ is the evaluation at $\pi$ of the derivative $E'(u)$.
\end{prop}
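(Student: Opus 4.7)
The plan is to translate the cocycle condition into an explicit relation among the matrices $A_n$ by first computing the ``addition law'' among the pd-variables $X_1, X_2, X_{12} \in \gs^2/E$ supplied by Lemma \ref{pd polynomial}, and then recognizing the resulting functional equation as being solved by a binomial series.

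First I would work out the relation among the three pd-variables. Inside $\gs^2$ one has $u_1 - u_2 = (u_0 - u_2) - (u_0 - u_1) = E(u_0)(X_2 - X_1)$ and $u_1 - u_2 = E(u_1) X_{12}$. A Taylor expansion of $E(u_1) = E(u_0 - E(u_0) X_1)$ yields
\[
E(u_1) = E(u_0)\Bigl(1 - E'(u_0) X_1 + \tfrac{E''(u_0)}{2} E(u_0) X_1^2 - \cdots\Bigr),
\]
so that modulo the ideal $(E(u_0))$ one obtains the key identity in $\gs^2/E$,
\[
X_{12} = \frac{X_2 - X_1}{1 - E'(\pi) X_1}, \qquad \text{equivalently} \qquad 1 - E'(\pi) X_{12} = \frac{1 - E'(\pi) X_2}{1 - E'(\pi) X_1}.
\]
Here the geometric series $(1 - E'(\pi) X_1)^{-1} = \sum_{k \geq 0} E'(\pi)^k X_1^k$ converges in the pd-completion, because $X_1^k = k!\, X_1^{[k]}$ and $E'(\pi)^k k! \to 0$ $p$-adically (using $v_K(E'(\pi)) = e - 1$).

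Writing $\varepsilon(\underline e) = \underline e \cdot f(X)$ with $f(X) = \sum_n A_n X^n/n!$ and noting that pulling back along $p_{ij}$ substitutes $X$ by the corresponding pd-variable, the cocycle condition becomes the matrix functional equation
\[
f(X_2) = f(X_1) \cdot f(X_{12}).
\]
For the sufficiency direction, I would verify that the closed form $f(X) = (1 - E'(\pi) X)^{-A_1/E'(\pi)}$ (interpreted via the binomial series) satisfies this equation: since the factors $i E'(\pi) \cdot I + A_1$ commute with one another, one computes
\[
f(X_1)^{-1} f(X_2) = \Bigl(\tfrac{1 - E'(\pi) X_2}{1 - E'(\pi) X_1}\Bigr)^{-A_1/E'(\pi)} = (1 - E'(\pi) X_{12})^{-A_1/E'(\pi)} = f(X_{12}),
\]
using the identity derived above. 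Expanding the binomial series produces $A_n = \prod_{i=0}^{n-1}(iE'(\pi) + A_1)$, hence the recursion $A_{n+1} = \prod_{i=0}^n (iE'(\pi) + A_1)$ and the reformulation \eqref{eq413rewritten}.

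For the necessity direction, setting $X_1 = X_2 = 0$ forces $A_0 = I$; then differentiating $f(X_2) = f(X_1) f(X_{12})$ once in $X_1$ and evaluating at $X_1 = 0$, using $\partial_{X_1} X_{12}\big|_{X_1=0} = E'(\pi) X_2 - 1$, yields the first-order ODE $(1 - E'(\pi) X_2)\, f'(X_2) = A_1 \cdot f(X_2)$, whose coefficient-wise comparison in $X_2^n/n!$ gives precisely $A_{n+1} = (n E'(\pi) + A_1)\, A_n$. Finally, the ``converges to zero'' clause on $\{A_n\}$ is exactly the requirement that $\sum_n A_n X^{[n]}$ lie in the $p$-adically completed pd-polynomial ring, and hence is equivalent to $f(X)$ genuinely defining an element of $\gs^1/E$ rather than a mere formal series. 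I expect the main technical obstacle to be carrying out the Taylor expansion and the inversion of $1 - E'(\pi) X_1$ rigorously inside the completed pd-ring (and tracking that all manipulations, including substitution $X \mapsto X_{12}$ into an infinite pd-series, remain convergent); once these convergences are in place, the algebraic manipulations above are formal.
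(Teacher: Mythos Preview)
The paper does not prove this proposition at all: it is stated with the citation ``cf.\ \cite[Thm.~3.5, Rem.~3.7]{MW}'' and nothing further, so there is no in-paper argument to compare against. Your proposal is a correct and complete reconstruction of the proof. The computation of the addition law $X_{12} = (X_2 - X_1)/(1 - E'(\pi)X_1)$ via Taylor expansion of $E(u_1)$ modulo $E(u_0)$, the translation of the cocycle condition into $f(X_2) = f(X_1) f(X_{12})$, the derivation of the ODE $(1 - E'(\pi)X)f'(X) = A_1 f(X)$ by differentiating in $X_1$ at $X_1 = 0$, and the identification of the convergence clause with membership in the $p$-adically completed pd-polynomial ring are all correct. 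Your observation that the factors $iE'(\pi)I + A_1$ mutually commute is what makes the product formula and the binomial rewriting unambiguous. The only point you leave slightly implicit is that $\varepsilon$ is an \emph{isomorphism}: this follows either from the explicit inverse $(1 - E'(\pi)X)^{A_1/E'(\pi)}$, or from setting $X_2 = 0$ in the functional equation to exhibit $f(X_1)^{-1}$. This is essentially the argument one would expect to find in \cite{MW}.
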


\begin{defn}\label{defHTmod}
Define the (tensor) category of Hodge-Tate prismatic \emph{modules}  to consist of  pairs $(M, f)$ where $M$ is a finite  free $\calO_K$-module, $f: M \to M$ is an $\ok$-module endomorphisms such that all of its eigenvalues (in $\overline{K}$) are of the form
\begin{equation}\label{eqeigen415}
iE'(\pi) +\fkm_{\O_{\overline{K}}} \text{ for some } i \in \mathbb Z.
\end{equation}
 That is, the eigenvalues belong to $\mathbb Z +\fkm_{\O_{\overline{K}}}$ when $K$ is unramified, and belong to $\fkm_{\O_{\overline{K}}}$ when $K$ has non-trivial ramification.
 A morphism $(M, f) \to (N, g)$ is an $\ok$-module homomorphism $M \to N$ compatible with $f$ and $g$.
\end{defn}

\begin{cor}\label{cor416}
The category $\Vect((\calO_K)_{\Prism},\overline \calO_{\Prism})$ is tensor equivalent to the category of Hodge-Tate prismatic modules via the evaluation functor:
$$\bM \mapsto \bM((\gs, (E))).$$
\end{cor}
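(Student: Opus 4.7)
The plan is to assemble the pieces already in place. The evaluation functor sends a crystal $\bM$ to $M := \bM((\gs,(E)))$ together with the induced stratification on the cosimplicial evaluations $\bM((\gs^\bullet,(E)))$. Since the Breuil--Kisin prism $(\gs,(E))$ covers the final object of $(\ok)_\Prism$ (\cite[Lem. 2.2]{MW}), the standard prismatic descent argument (e.g.\ \cite[Cor. 3.9]{MT}) identifies $\Vect((\ok)_\Prism,\baroprism)$ with the category of finite projective (hence finite free, as $\ok$ is a DVR) $\ok$-modules $M$ equipped with a stratification satisfying the cocycle condition.

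By Lem.~\ref{pd polynomial} and Prop.~\ref{matrix of stratification}, for a fixed basis $\underline{e}$ of $M$ such a stratification is uniquely encoded by a single matrix $A_1$, from which all $A_n$ are determined by the given recursion. Under a change of basis $\underline{e}' = \underline{e}P$ one computes $A_1' = P^{-1}A_1 P$, so $A_1$ defines a basis-independent $\ok$-linear endomorphism $f\colon M\to M$. The refined evaluation functor is $\bM\mapsto(M,f)$; it then remains to match the admissibility of $A_1$ (the convergence $A_{n+1}\to 0$) with the eigenvalue condition on $f$ of Def.~\ref{defHTmod}.

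This matching is the principal technical point, but it is short: the matrices $iE'(\pi)+A_1$ all commute (each is a polynomial in $A_1$), so the eigenvalues of $A_{n+1}$ are exactly the products $\prod_{i=0}^n(iE'(\pi)+\lambda)$ as $\lambda$ ranges over the eigenvalues of $f$. A direct $p$-adic valuation computation shows that such a product converges to zero if and only if $\lambda$ lies in $E'(\pi)\mathbb{Z}+p^{-(e-1)/e}E'(\pi)\cdot\fkm_{\O_{\overline K}}$; since $v_p(E'(\pi))=(e-1)/e$, the second summand collapses to $\fkm_{\O_{\overline K}}$, yielding exactly condition \eqref{eqeigen415}. (This is the quantitative content already flagged around \eqref{eqanplus1converge}; the new input is only the commutativity that lets one pass from matrices to eigenvalues.)

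Full faithfulness is then automatic: a morphism of crystals restricts to an $\ok$-linear map $\phi\colon M\to N$ compatible with the stratifications, which after expanding in $X$ and matching coefficients is equivalent to $\phi A_1^M = A_1^N\phi$, i.e.\ $\phi$ intertwines $f$ and $g$. Essential surjectivity is provided by Prop.~\ref{matrix of stratification} itself: from any admissible $(M,f)$ the formula \eqref{eq413rewritten} produces a bona fide cocycle stratification. For the tensor structure, $f\otimes\id_N$ and $\id_M\otimes g$ commute on $M\otimes_\ok N$, so the formal identity $(1-E'(\pi)X)^{-A/E'(\pi)}(1-E'(\pi)X)^{-B/E'(\pi)}=(1-E'(\pi)X)^{-(A+B)/E'(\pi)}$ for commuting $A,B$ (valid in the pd-completed ring by $\exp$-$\log$) gives $\varepsilon_{M\otimes N}=\varepsilon_M\otimes\varepsilon_N$, identifying the endomorphism attached to $M\otimes N$ with $f\otimes 1 + 1\otimes g$. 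The main things to watch are the basis-invariance of $f$ and the eigenvalue translation in the third paragraph; neither is deep, but both deserve careful bookkeeping.
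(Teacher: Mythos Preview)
Your overall strategy coincides with the paper's: reduce to stratifications via Prop.~\ref{matrix of stratification}, then translate the convergence of $A_{n+1}=\prod_{i=0}^n(iE'(\pi)+A_1)$ into the eigenvalue condition of Def.~\ref{defHTmod}. Your added discussion of basis-invariance of $f$, the characterisation of morphisms, and the tensor structure is more explicit than the paper's one-paragraph proof, and is all correct.

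There is, however, a genuine error in your eigenvalue step. You assert $v_p(E'(\pi))=(e-1)/e$ and use it to collapse the set $E'(\pi)\mathbb{Z}+p^{-(e-1)/e}E'(\pi)\cdot\fkm_{\O_{\overline K}}$ to $E'(\pi)\mathbb{Z}+\fkm_{\O_{\overline K}}$. The equality $v_p(E'(\pi))=(e-1)/e$ holds only in the tamely ramified case; when $p\mid e$ one has $v_p(E'(\pi))\geq 1>(e-1)/e$ (look at the Newton polygon of $E'(u)$ evaluated at $\pi$), so $p^{-(e-1)/e}E'(\pi)\in\fkm_{\O_{\overline K}}$ and your intermediate set is strictly smaller than $\fkm_{\O_{\overline K}}$. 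Concretely, for $E(u)=u^p-p$ one gets $v_p(E'(\pi))=(2p-1)/p$, and an eigenvalue $\lambda=\pi$ lies in $\fkm_{\O_{\overline K}}$ (so the product converges) but not in your displayed set. Thus both the claimed intermediate form and the collapsing step fail in the wildly ramified case.

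The paper's argument sidesteps this entirely and is shorter: since the factors $iE'(\pi)+\lambda$ are all integral, the product can tend to $0$ only if some factor lies in $\fkm_{\O_{\overline K}}$; conversely, if $i_0E'(\pi)+\lambda\in\fkm_{\O_{\overline K}}$ then so does $(i_0+kp)E'(\pi)+\lambda$ for every $k$, which already forces the valuation of the partial products to $+\infty$. No knowledge of $v_p(E'(\pi))$ beyond its nonnegativity is needed. Replacing your third paragraph with this direct argument repairs the proof.
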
 
\begin{proof}
Let $(M, f)$ be an $\ok$-module with an endomorphism, and let $A_1$ be the matrix of $f$ with respect to a chosen basis of $M$. These data, via Eqn. \eqref{eq413rewritten}, induce a stratification if and only if $\prod_{i=0}^n(iE'(\pi)+A_1)$ converges to zero. This is equivalent to the condition that for each eigenvalue $\alpha$ of $A_1$, the element $\prod_{i=0}^n(iE'(\pi)+\alpha)$ converges to zero in $C$. Hence in particular, \emph{one} of the terms $iE'(\pi)+\alpha$ is in the maximal ideal, giving rise to the condition \eqref{eqeigen415}.
Conversely, if $iE'(\pi)+\alpha$ is in the maximal ideal, then each $jE'(\pi)+\alpha$ is integral, and each $(i+kp)E'(\pi)+\alpha$ is in the maximal ideal, and hence $\prod_{i=0}^n(iE'(\pi)+\alpha)$ converges to zero.
\end{proof}
 
 \begin{rem}\label{remratHT}
 All the above discussions hold for \emph{rational} Hodge-Tate prismatic crystals, simply by replacing all appearances of ``$\ok$-modules" by ``$K$-vector spaces".
 \end{rem}
 
\subsection{Hodge-Tate crystals on perfect prismatic site}\label{subsec42}


\begin{defn} \label{Dfn-rational Hodge--Tate crystal}
    A \emph {rational Hodge-Tate prismatic crystal} on the perfect prismatic site $(\calO_K)^{\perf}_{\Prism}$ is a sheaf $\bM$ of $\overline \calO_{\Prism}[\frac{1}{p}]$-modules such that for any $(A,I)\in(\calO_K)^{\perf}_{\Prism}$, $\bM((A,I))$ is a finite projective $A/I[\frac{1}{p}]$-module and that for any morphism $(A,I)\to (B,J)$, the natural map
    \[\bM((A,I))\otimes_A B  \to \bM((B,J))\]
    is an isomorphism.
    Denote by $\Vect((\calO_K)^{\perf}_{\Prism},\overline \calO_{\Prism}[\frac{1}{p}])$ the category of such objects.
\end{defn}

\begin{construction} Let $\bM \in  \Vect((\calO_K)^{\perf}_{\Prism},\overline \calO_{\Prism}[\frac{1}{p}])$. Let $\xi:=\frac{[\underline{\varepsilon}]-1}{\varphi^{-1}([\underline{\varepsilon}]-1)}$, which is a generator of $\ker\theta$.
\begin{enumerate}
\item The evaluation of $\bM$ at $(\ainf, (\xi))$ is a $C$-vector space, which has a canonical semi-linear $G_K$-action induced from the $G_K$-action on the prism $(\ainf, (\xi))$.

\item Let $\A_{\inf, L}:=(\ainf)^{G_L}$, then $(\A_{\inf, L}, (\xi))$ is still a perfect prism. The evaluation of $\bM$ at  $(\A_{\inf, L}, (\xi))$ induces a $\hat{L}$-vector space with a semi-linear $\hat{G}$-action.
\end{enumerate}
\end{construction}

\begin{prop}\label{crystal is C-rep} \cite[Thm. 3.12, Rem. 3.13]{MW}
The evaluations  above   induce  tensor equivalences of categories:
\begin{equation}
\begin{tikzcd}
                                                                                                & \rep_\gk(C)                      \\
{\Vect((\calO_K)^{\perf}_{\Prism},\overline \calO_{\Prism}[\frac{1}{p}])} \arrow[ru] \arrow[rd] &                                  \\
                                                                                                & \rep_{\hat G}(\hat L) \arrow[uu]
\end{tikzcd}
\end{equation}
Here the vertical equivalence is a well-known consequence of Falting's almost purity theorem (essentially because $\hat L$ is a perfectoid field).
\end{prop}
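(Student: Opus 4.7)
The plan is to exploit the very simple structure of the perfect prismatic site and then apply \v{C}ech descent, reducing the main content to classical Galois descent. The vertical equivalence $\rep_{\hat L}(\hat G) \simeq \rep_C(\gk)$ is standard: since $\hat L$ is a perfectoid field and $G_L$ is normal in $\gk$ with quotient $\hat G$, Faltings' almost purity theorem implies that continuous $\hat G$-cohomology with coefficients in $\GL_n(C)$ is almost trivial, hence $W\mapsto W^{G_L}$ and $W_0\mapsto W_0\otimes_{\hat L}C$ are quasi-inverse tensor equivalences. So the real content is the two diagonal arrows.

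First I would verify that $(\ainf,(\xi))$, resp.\ $(\A_{\inf,L},(\xi))$, is a cover of the final object in $(\calO_K)^{\perf}_{\Prism}$. Via tilting, perfect prisms over $\calO_K$ correspond to perfectoid $\calO_K$-algebras $(A,I)\mapsto A/I$; for any such $A/I[1/p]$ one can, after enlarging it to a perfectoid extension containing $C$, obtain a map $\ainf\to A$ of prisms over $\calO_K$, and analogously for $\A_{\inf,L}$ using $\hat L\hookrightarrow C$. Consequently a Hodge--Tate crystal $\bM$ on the perfect site is determined by its value $W:=\bM((\ainf,(\xi)))$, which is a finite projective $C$-module, together with a descent datum with respect to the two projections from the self-product of $(\ainf,(\xi))$ in $(\calO_K)^{\perf}_{\Prism}$.

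Next I would identify this self-product. Under the equivalence between perfect prisms over $\calO_K$ and perfectoid $\calO_K$-algebras, the self-product of $(\ainf,(\xi))$ corresponds to the completed self-product of $\O_C$ as a perfectoid $\calO_K$-algebra; since $\O_C$ is the $p$-adic completion of a $G_K$-Galois extension of $\calO_K$ in the appropriate perfectoid sense, a concrete computation (analogous to the one yielding the pd-polynomial ring in Lemma~\ref{pd polynomial}, but in the perfect setting) identifies the \v{C}ech nerve of $(\ainf,(\xi))$ with the complex of continuous cochains on $\gk$ valued in $\ainf$, up to almost isomorphism. Thus a descent datum on $W$ becomes exactly a continuous semi-linear $\gk$-action, giving the upper diagonal equivalence with $\rep_C(\gk)$. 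Replacing $(\ainf,(\xi))$ by $(\A_{\inf,L},(\xi))$ and $\gk$ by $\hat G=\gk/G_L$ gives the lower diagonal equivalence with $\rep_{\hat L}(\hat G)$, and commutativity of the triangle is immediate from naturality: the composition $\rep_C(\gk)\to\rep_{\hat L}(\hat G)$ coming from restriction of the crystal along $(\A_{\inf,L},(\xi))\to(\ainf,(\xi))$ is just $W\mapsto W^{G_L}$.

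The main obstacle is pinning down the \v{C}ech identification precisely: one must show that the $n$-fold self-product of $(\ainf,(\xi))$ (respectively $(\A_{\inf,L},(\xi))$) in the \emph{perfect} prismatic site indeed computes continuous cochains on $\gk$ (respectively $\hat G$) with values in $\ainf$ (respectively $\A_{\inf,L}$), modulo the expected almost-mathematics. This is where continuity of the resulting semi-linear action is secured, and where the perfectoid/almost-purity formalism is really used. Once this identification is in hand, essential surjectivity and full faithfulness follow formally, as does compatibility with tensor products, since both the \v{C}ech data and Galois descent are visibly symmetric monoidal.
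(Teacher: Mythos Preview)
The paper does not actually prove this proposition; it is quoted verbatim from Min--Wang \cite[Thm.~3.12, Rem.~3.13]{MW} and no argument is supplied here. So there is no ``paper's own proof'' to compare against, only the cited reference.

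Your outline is the standard one and is essentially what \cite{MW} (and, in a closely related setting, \cite[\S3--4]{BSFcrystal}) carries out: show that $(\ainf,(\xi))$ covers the final object of $(\calO_K)^{\perf}_{\Prism}$, identify the \v{C}ech nerve with continuous cochains on $\gk$, and read off the descent datum as a semi-linear $\gk$-action. Two small corrections and one suggestion. First, in your opening paragraph the cohomology that must vanish is $H^1(G_L,\GL_n(C))$, not $\hat G$-cohomology; this is what ensures $W^{G_L}\otimes_{\hat L}C\simeq W$, and it follows from Ax--Sen--Tate for the perfectoid field $\hat L$. Second, after passing to $\overline{\calO}_\Prism[1/p]$ the \v{C}ech nerve takes values in $C$ rather than in $\ainf$, and inverting $p$ turns the almost isomorphisms into honest ones, so the ``almost'' caveats disappear at the end. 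Finally, your covering argument (``enlarge to contain $C$'') is right in spirit but is cleanest phrased via the equivalence between perfect prisms over $\calO_K$ and integral perfectoid $\calO_K$-algebras, together with the fact that $\Spd(C,\O_C)\to\Spd(K,\O_K)$ is a pro-\'etale $\gk$-torsor in diamonds; the \v{C}ech nerve is then $\underline{\gk^\bullet}\times\Spd(C)$, which immediately gives continuous $\gk$-cochains and handles both the covering property and continuity in one stroke.
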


 \subsection{Hodge-Tate prismatic crystals and Sen theory}\label{subsecHTsen}

\begin{notation}
 \label{consrescrys}
Let  $\bM\in\Vect((\calO_K)_{\Prism},\overline \calO_{\Prism}[1/p])$.  Consider evaluations
 $$M:=\bM((\gs, E)), \quad V(\bM):=\bM((\A_{\inf, L}, (\xi))) \in \rep_{\hat{G}}(\hat{L}), \quad W:=\bM((\ainf, \xi)) \in \rep_\gk(C);  $$
 they give rise to  identifications (of vector spaces)
\begin{equation}
\label{eqdimbm}
M\otimes_{\calO_K}\hat{L} = V(\bM), \quad V(\bM)\otimes_{\hat{L}}C=W.
\end{equation}
\end{notation}

\begin{prop}\label{representation comes from crystal} \cite[Thm. 3.15]{MW}
Use notations in \ref{consrescrys}.
Let $(M, f)$ be the corresponding pair as in Def. \ref{defHTmod}, except now with $M$ a $K$-vector space, cf. Rem. \ref{remratHT}. 
Let $\underline{e}$ be a basis of $M$, and let $A_1$ be the matrix of $f$ with respect to this   basis.
 Then the (semi-linear) action of $\hat{G}$ on $V(\bM)$ is given by:
  \begin{equation} \label{eqcocyU}
      g(\underline{e})  = (\underline{e})\cdot \big(1-c(g)  E'(\pi)  \pi   \lambda_{\mathrm{MW}}  (1-\mu_p)\big)^{-\frac{A_1}{E'(\pi)}}.
  \end{equation}
 Here $c(g)\in\Zp$ is determined by $g(\underline \pi) = \underline{\varepsilon}^{c(g)}\underline \pi$; the symbol $\lambda_{\mathrm{MW}}$, denoted as ``$\lambda$" in \cite[Prop. 3.14]{MW}, equals to $\theta( {\xi}/{ E} ) $.
\end{prop}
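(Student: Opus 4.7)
The strategy is to translate the stratification of Proposition \ref{matrix of stratification} into the $\hat{G}$-action by interpreting each $g \in \hat{G}$ as defining a morphism $\rho_g : (\gs^1, (E)) \to (\A_{\inf, L}, (\xi))$ and evaluating.

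First, I would construct $\rho_g$ explicitly. The canonical map $\iota : (\gs, (E)) \to (\A_{\inf, L}, (\xi))$ with $u \mapsto [\underline{\pi}]$ is a $\delta$-map of prisms, and its twist $g \circ \iota$, sending $u \mapsto g([\underline{\pi}]) = [\underline{\varepsilon}]^{c(g)}[\underline{\pi}]$, is another such map (the divisibility $E(g([\underline{\pi}])) \in (\xi)$ holds because $g(\pi) = \pi$, so $\theta(E(g([\underline{\pi}]))) = E(\pi) = 0$; the $\delta$-compatibility follows from the Teichm\"uller lifts having $\delta = 0$). By the universal property of the self-product $(\gs^1, (E))$ inside $\okprism$, these assemble into a morphism $\rho_g$ with $\rho_g(u_0) = [\underline{\pi}]$ and $\rho_g(u_1) = [\underline{\varepsilon}]^{c(g)}[\underline{\pi}]$. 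Identifying $V(\bM) = M \otimes_{\calO_K} \hat{L}$ via the $\iota$-base change from the crystal property, the $g$-action on $V(\bM)$ is obtained by evaluating the stratification $\varepsilon$ along $\rho_g$ and reducing modulo $(\xi)$:
\[
g(\underline{e}) = \underline{e} \cdot \bigl(1 - E'(\pi)\bar{X}_g\bigr)^{-A_1/E'(\pi)},
\]
where $\bar{X}_g \in \hat{L}$ is the image of the pd-variable $X$ from Lemma \ref{pd polynomial} under $\rho_g$ modulo $(\xi)$.

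The main computation is then to verify $\bar{X}_g = c(g)\pi\lambda_{\mathrm{MW}}(1 - \mu_p)$. I would work inside $\bdrplus$, where both $\xi$ and $E([\underline{\pi}])$ generate the maximal ideal, so $\xi / E([\underline{\pi}])$ is a unit with $\theta(\xi/E([\underline{\pi}])) = \lambda_{\mathrm{MW}}$. Setting $w := \varphi^{-1}([\underline{\varepsilon}] - 1) = [\underline{\varepsilon}]^{1/p} - 1$ so that $[\underline{\varepsilon}] - 1 = \xi w$ and $\theta(w) = \mu_p - 1$, a binomial expansion in $\bdrplus$ gives
\[
1 - [\underline{\varepsilon}]^{c(g)} \;=\; 1 - (1 + \xi w)^{c(g)} \;=\; -c(g)\xi w + O(\xi^2).
\]
Consequently
\[
\rho_g(X) \;=\; \frac{[\underline{\pi}]\bigl(1 - [\underline{\varepsilon}]^{c(g)}\bigr)}{E([\underline{\pi}])} \;=\; -c(g) \cdot [\underline{\pi}] \cdot w \cdot \frac{\xi}{E([\underline{\pi}])} + O(\xi),
\]
and applying $\theta$ yields $\bar{X}_g = -c(g)\pi(\mu_p - 1)\lambda_{\mathrm{MW}} = c(g)\pi\lambda_{\mathrm{MW}}(1 - \mu_p)$, as claimed.

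The main obstacle is conceptual rather than computational: making precise the translation from the stratification to the semi-linear $g$-action, in particular pinning down the $p_0$/$p_1$ convention so that the sign and the exponent in the formula come out as stated (rather than their inverse), and checking that the multiplicativity $g \mapsto (\cdots)^{-A_1/E'(\pi)}$ in $g$ is forced by the cocycle condition of Proposition \ref{matrix of stratification}. The latter can be verified by evaluating the stratification on $(\gs^2, (E))$ along the triple $(\iota, g \circ \iota, (gh) \circ \iota)$ and comparing with the two-step composition; the matrix exponential converges over $\hat{L}$ because $c(g) E'(\pi) \pi \lambda_{\mathrm{MW}}(1 - \mu_p)$ has strictly positive $p$-adic valuation, which is exactly the near Hodge--Tate condition appearing in Definition \ref{defHTmod}.
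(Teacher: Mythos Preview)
The paper does not give its own proof of this proposition; it is simply quoted from \cite[Thm.~3.15]{MW}. Your argument is correct and is the natural one (and is essentially how Min--Wang proceed): pull back the stratification along the map $\rho_g\colon(\gs^1,(E))\to(\A_{\inf,L},(\xi))$ determined by the pair $(\iota,\,g\circ\iota)$ via the universal property of the self-product, then compute the image of $X=\dfrac{u_0-u_1}{E(u_0)}$ by expanding $[\underline{\varepsilon}]^{c(g)}-1$ to first order in $\xi$. Your identification of the resulting matrix with the $g$-action, via the two crystal-property isomorphisms $\alpha_\iota$ and $\alpha_{g\iota}$, is exactly the right translation, and the $p_0/p_1$ convention you chose is the one that matches the stated formula.

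One small imprecision in your closing paragraph: you attribute the convergence of $(1-E'(\pi)\bar X_g)^{-A_1/E'(\pi)}$ to the near Hodge--Tate condition. More precisely, the near Hodge--Tate condition on $A_1$ is what guarantees that the stratification lies in the pd-polynomial ring $\calO_K\{X\}^{\wedge}_{\rm pd}$ to begin with (equivalently $A_n\to 0$); once that is in hand, convergence of the \emph{evaluation} at $\bar X_g$ is automatic from the fact that $\rho_g$ is a map of prisms, so that the induced ring map $\calO_K\{X\}^{\wedge}_{\rm pd}\to \calO_{\hat L}$ exists. Concretely, $v_p(\bar X_g)\geq \tfrac{1}{e}+\tfrac{1}{p-1}>\tfrac{1}{p-1}$, so $\bar X_g^n/n!$ remains integral and the series converges regardless of any further condition on $A_1$.
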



\begin{thm}\label{thmMWSen}
Use notations in \ref{consrescrys} and Prop. \ref{representation comes from crystal}.  
\begin{enumerate}
\item We have
 \begin{equation}\label{eqMDsen}
  D_{\Sen, \kinfty}(W) =M\otimes_K  \kinfty.
  \end{equation}
  
 \item  Scaling $f: M \to M$ by $\frac{-1}{E'(\pi)}$ and extending $\kinfty$-linearly gives rise to
$$\frac{-1}{E'(\pi)}f: M\otimes_K  \kinfty \to M\otimes_K  \kinfty;$$
then this is the \emph{same} (via \eqref{eqMDsen}) as the $\kinfty$-Sen operator 
\begin{equation*} 
\frac{1}{\theta(u\lambda')}\cdot N_\nabla: D_{\Sen, \kinfty}(W) \to D_{\Sen, \kinfty}(W)
\end{equation*}
 defined in Thm. \ref{thmkummersenop}.
 \end{enumerate}
 \end{thm}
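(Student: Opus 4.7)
The plan is to realize the basis $\underline{e}$ of $M$ (viewed inside $V(\bM) \subset W$) as a $\kinfty$-basis of $D_{\Sen,\kinfty}(W)$ using the explicit cocycle of Prop.~\ref{representation comes from crystal}, and then identify the Kummer-Sen operator by differentiating this cocycle at the identity. For any $g \in \gal(L/\kinfty)$, the fact that $g$ fixes every $\pi_n$ gives $c(g) = 0$ and hence $g(\underline{e}) = \underline{e}$. For $\tau$, Notation~\ref{nota hatG} shows $c := c(\tau)$ equals $1$ when $\kinfty \cap \kpinfty = K$ and equals $2$ when $\kinfty \cap \kpinfty = K(\pi_1)$ (which forces $p=2$); in either case $c(\tau^m) = mc$, so
\[
\tau^m(\underline{e}) \;=\; \underline{e} \cdot \bigl(1 - mc\cdot E'(\pi)\pi\lambda_{\mathrm{MW}}(1-\mu_p)\bigr)^{-A_1/E'(\pi)}, \qquad m \in \Zp.
\]
The convergence built into Prop.~\ref{matrix of stratification} makes the right-hand side analytic in $m$, so $\underline{e} \subset (W^{G_L})^{\tau\dla,\gamma=1} = D_{\Sen,\kinfty}(W)$. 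Since the entries of $\underline{e}$ form a $K$-basis of $M$, they are $\kinfty$-linearly independent in $M\otimes_K \kinfty$; matching against $\dim_\kinfty D_{\Sen,\kinfty}(W) = \dim_K M$ from Thm.~\ref{thm331kummersenmod} shows $\underline{e}$ is a $\kinfty$-basis, which proves \eqref{eqMDsen}.

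Differentiating the displayed formula at $m=0$ produces
\[
\nabla_\tau(\underline{e}) \;=\; c\,\underline{e} \cdot A_1\,\pi\,\lambda_{\mathrm{MW}}(1-\mu_p).
\]
Applying the normalization in Def.~\ref{defnablaforL}, both cases collapse to the uniform formula
\[
N_\nabla(\underline{e}) \;=\; \underline{e} \cdot \frac{A_1\,\pi\,\lambda_{\mathrm{MW}}(1-\mu_p)}{p\,\theta(\mathfrak t)},
\]
because when $c=2, p=2$ the extra factor of $c$ is absorbed by the $p^2$ in the denominator. Therefore part~(2) reduces to the scalar identity
\[
\frac{\pi\,\lambda_{\mathrm{MW}}(1-\mu_p)}{p\,\theta(\mathfrak t)\,\theta(u\lambda')} \;=\; \frac{-1}{E'(\pi)}.
\]

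The main obstacle is this constant identity, which I verify by direct computation of both $\theta$-values. Expanding $\lambda = \prod_{n\geq 0}\varphi^n(E/E(0))$ and using that $\theta(E) = 0$ kills every factor except the derivative of the $n=0$ term, one finds $\theta(u\lambda') = \pi E'(\pi)\cdot E(0)^{-1}\prod_{n\geq 1}\theta(\varphi^n(E/E(0)))$. For $\theta(\mathfrak t) = \theta(t/(p\lambda))$, I rewrite
\[
\mathfrak t \;=\; \frac{[\underline\varepsilon]-1}{E}\cdot \frac{E(0)\cdot t/([\underline\varepsilon]-1)}{p\prod_{n\geq 1}\varphi^n(E/E(0))};
\]
the key point is that $([\underline\varepsilon]-1)/E = ([\underline\varepsilon^{1/p}]-1)\cdot(\xi/E)$ actually lies in $\ainf$ (since $\xi/E$ is a unit there) and its $\theta$-specialization equals $(\mu_p-1)\lambda_{\mathrm{MW}}$, while $\theta(t/([\underline\varepsilon]-1)) = 1$ by the log expansion of $t=\log([\underline\varepsilon])$. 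Multiplying gives $p\,\theta(\mathfrak t)\,\theta(u\lambda') = (\mu_p-1)\lambda_{\mathrm{MW}}\pi E'(\pi)$, so the ratio $\pi\lambda_{\mathrm{MW}}(1-\mu_p)/(p\,\theta(\mathfrak t)\,\theta(u\lambda'))$ simplifies to $(1-\mu_p)/((\mu_p-1)E'(\pi)) = -1/E'(\pi)$, confirming the identity. Consequently $\frac{1}{\theta(u\lambda')}N_\nabla(\underline{e}) = \underline{e}\cdot(-A_1/E'(\pi))$, which is exactly the matrix of $-f/E'(\pi)$ in the basis $\underline{e}$, completing the proof of (2).
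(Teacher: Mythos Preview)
Your proof is correct and follows essentially the same approach as the paper: for (1) you show $\underline{e}$ lies in $(W^{G_L})^{\gamma=1,\tau\dla}$ via the explicit cocycle and conclude by a dimension count (the paper cites \cite[Prop.~3.1.6]{GP} at this step, which amounts to the same thing), and for (2) you differentiate the cocycle and reduce to the identical scalar identity. Your verification of that identity is organized slightly differently---you compute $\theta(u\lambda')$ and $\theta(\mathfrak t)$ separately and multiply, whereas the paper manipulates the ratio symbolically using $\theta(t/\mu)=1$ and $\theta(\lambda') = \tfrac{E'(\pi)}{E(0)}\theta(\varphi(\lambda))$---but the content is the same.
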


The above theorem proves \cite[Conj. 3.17]{MW}: indeed,   the $C$-linear extension
$$\frac{-1}{E'(\pi)}f: W\to W,$$
via Thm. \ref{thmkummersenop} again, is  the  Sen operator on $W$ as in Notation \ref{notaSenop}.

\begin{proof}
Item (1). The formula \eqref{eqcocyU} implies that   $\gal(L/\kinfty)$ acts trivially on $M$. In addition, the formula 
\begin{equation}\label{eqtaue433}
       \tau^i (\underline{e}) =(\underline{e}) \big(1- c(\tau^i)\cdot E'(\pi)\cdot \pi \cdot \lambda_{\mathrm{MW}} \cdot (1-\mu_p)\big)^{-\frac{A_1}{E'(\pi)}},
  \end{equation}
implies that elements of $M$ are $\tau$-locally analytic.  
Here note 
$$c(\tau^i)=i, \text{ resp. } 2i, \text{ when } \Kinfty \cap \Kpinfty=K, \text{ resp. }  K(\pi_1).$$
In summary, 
$$M \subset V(\bM)^{\gamma=1, \tau\dla}$$
  Thus \cite[Prop. 3.1.6]{GP} implies that
  \begin{equation*}
  D_{\Sen, \kinfty} =M\otimes_K  \kinfty.
  \end{equation*}
 
Item (2).  The formula \eqref{eqtaue433} further implies that
\begin{equation*} \label{mwtau}
\nabla_\tau (\underline{e}) = (\underline{e})\pi \lambda_{MW} (1-\mu_p) A_1, \quad \text{ resp. }  (\underline{e})2\pi \lambda_{MW} (1-\mu_p) A_1, 
\end{equation*}
$\text{ when } \Kinfty \cap \Kpinfty=K, \text{ resp. }  K(\pi_1)$.
This implies our $\kinfty$-Sen operator acts via
\begin{equation}
\frac{1}{\theta(u\lambda')}\cdot N_\nabla(\underline{e}) 
= (\underline{e})\frac{ \pi \lambda_{MW} (1-\mu_p)A_1  }{p\theta(u\lambda' \fkt)}
= (\underline{e}) \frac{ \pi \lambda_{MW} ( \mu_p-1) E'(\pi)}{p\theta(u\lambda' \fkt)}\cdot \frac{-A_1}{E'(\pi)}
\end{equation}
To finish the proof of this theorem, it suffices to check the ``scalar term" is $1$, namely:
\begin{equation*}
\frac{ \pi \lambda_{MW} (\mu_p-1) E'(\pi)}{p\theta(u\lambda'\fkt)}=1
\end{equation*}
Using $\lambda_{MW} =\theta(\frac{\mu}{\varphi^{-1}(\mu)E} )$ where $\mu=[\underline{\varepsilon}]-1$,
and $\theta(\varphi^{-1}(\mu))=\mu_p-1$, the above identity simplifies as 
\begin{equation*}
\theta( \frac{\mu E'}{p\lambda' \fkt E}  )=1
\end{equation*}
Note $\theta(\frac{t}{\mu})=1$ using $t=\log(\mu+1)$, it suffices to check
\begin{equation}\label{eqwitht}
\theta( \frac{t E'}{p\lambda' \fkt E}  )=1
\end{equation}
 Apply multiplication rule to $\lambda'$, one sees that
 \begin{equation*}
 \theta(\lambda') =\frac{E'(\pi)}{E(0)} \theta(\varphi(\lambda))
 \end{equation*}
Hence \eqref{eqwitht} becomes
\begin{equation*}
\theta( \frac{t }{p\frac{E}{E(0)}  \varphi(\lambda) \fkt}  )=1
\end{equation*}
 The denominator is $p\frac{E}{E(0)}  \varphi(\lambda) \fkt=p\lambda \fkt =t$, thus we can conclude.
\end{proof}

\begin{remark}
We discuss   ``canonicity" of the equivalence in Cor. \ref{cor416}.
\begin{enumerate}
 \item 
Let $\pi_1, \pi_2$ be two uniformizers of $K$  with minimal polynomials $E_1, E_2$ over $K_0$.
Thm. \ref{thmMWSen} implies that the linear endomorphisms encoded in the equivalence Cor. \ref{cor416} (with respect to the two different Breuil-Kisin prisms) are differed by a scalar of $ {E_1'(\pi_1)}/{E_2'(\pi_2)}$. 
Hence the equivalence in Cor. \ref{cor416} is independent of choices of uniformizers if and only if $K$ is unramified.  Nonetheless, see also next item.

\item We want to warn the readers a potential pitfall. In the identification $M\otimes_K C=W$ via Eqn. \eqref{eqdimbm}, $W$ is independent of  choices of uniformizers. We can regard $M$ as a  $K$-sub-vector space of $W$; this sub-space is \emph{not} independent of choices of the Kummer tower $\kinfty$, (and hence perhaps we can say, that as a subspace of $W$, $M$ is \emph{not} ``canonical".) Indeed, suppose otherwise, that $M$ is independent of choices of Kummer towers $\kinfty$. The formula   \eqref{eqcocyU} implies that  $M$ has trivial action by $\gal(\overline{K}/\kinfty)$, now for \emph{any possible} Kummer tower $\kinfty$. But these different possible   $\gal(\overline{K}/\kinfty)$'s generate the entire $\gal(\overline{K}/K)$, by \cite[Lem. F15]{EG} or the discussions in \cite[Lem. 7.3.2]{Gaojems}. That is to say, $G_K$ acts on $M$ trivially; this implies that the Sen operator has to be zero, by formula   \eqref{eqcocyU}, or by \cite[p.100, Thm. 6]{Sen80}.

\end{enumerate}
\end{remark}

The following theorem finally completes the diagram \eqref{diagnht}. Recall $\rep_\gk^{\mathrm{nHT}}(C)$ is defined in Def. \ref{defnnht}.

\begin{theorem} \label{thmnht}
The evaluation functor $\bM \mapsto W:=\bM((\ainf, \xi))$ in Notation \ref{consrescrys} induces an equivalence of categories:
$$\Vect((\calO_K)_{\Prism},\overline \calO_{\Prism}[ {1}/{p}]) \simeq \rep_\gk^{\mathrm{nHT}}(C).$$
\end{theorem}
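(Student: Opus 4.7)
The plan is to factor the evaluation functor through the Breuil--Kisin prism $(\gs, E)$, reducing the statement to a direct consequence of Thm.~\ref{thmMWSen}. Given $\bM$ in the left-hand category, the equivalence of Cor.~\ref{cor416} (rational version, Rem.~\ref{remratHT}) identifies $\bM$ with a pair $(M, f)$, where $M = \bM((\gs, E))$ is a finite-dimensional $K$-vector space and $f \colon M \to M$ is a $K$-linear endomorphism whose eigenvalues lie in $\mathbb{Z} E'(\pi) + \fkm_{\O_{\overline{K}}}$; Prop.~\ref{representation comes from crystal} then realizes $W = \bM((\ainf, \xi))$ as $M \otimes_K C$ equipped with an explicit semi-linear $G_K$-action; and Thm.~\ref{thmMWSen} identifies the Sen operator of $W$ with the $C$-linear extension of $-f/E'(\pi)$.

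To show that the image lies in $\rep_C^{\mathrm{nHT}}(\gk)$, I will translate the eigenvalue condition on $f$ into the Sen-weight condition. Writing an eigenvalue of $f$ as $\alpha = n E'(\pi) + m$ with $n \in \mathbb{Z}$ and $m \in \fkm_{\O_{\overline{K}}}$, the corresponding Sen weight of $W$ is $-n - m/E'(\pi)$. A direct valuation estimate based on $v_p(E'(\pi)) \geq (e-1)/e$, immediate from the Eisenstein shape of $E(u)$, places $m/E'(\pi)$ in $\fkm_{\O_{\overline{K}}}$ when $e = 1$ and in $p^{-(e-1)/e}\fkm_{\O_{\overline{K}}}$ when $e > 1$; in both cases the Sen weight lies in the nearly Hodge--Tate set of Def.~\ref{defnnht}.

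For full faithfulness, I will compare morphism sets through Sen modules. A morphism $\bM_1 \to \bM_2$ of crystals corresponds (Cor.~\ref{cor416}) to a $K$-linear $h \colon M_1 \to M_2$ with $h f_1 = f_2 h$. A $G_K$-equivariant $C$-linear map $W_1 \to W_2$ corresponds, by the Kummer--Sen theory of Thm.~\ref{thm331kummersenmod} together with Thm.~\ref{thmkummersenop}, to a $\kinfty$-linear map $D_{\Sen, \kinfty}(W_1) \to D_{\Sen, \kinfty}(W_2)$ commuting with the Kummer--Sen operators. Invoking the identification $D_{\Sen, \kinfty}(W_i) = M_i \otimes_K \kinfty$ from Thm.~\ref{thmMWSen}(1), under which the Kummer--Sen operator agrees with the $\kinfty$-linear extension of $-f_i/E'(\pi)$, such a $\kinfty$-linear map respects the generalized eigenspace decomposition (whose eigenvalues lie in $\overline{K}$) and therefore descends canonically to a $K$-linear map of the $M_i$ commuting with the $f_i$.

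For essential surjectivity, given $W \in \rep_C^{\mathrm{nHT}}(\gk)$ with Sen operator $\Phi$, I will construct a pair $(M, f)$ whose associated $C$-representation is $W$. The $\Gamma_K$-equivariance of the Sen operator on $D_{\Sen, \kpinfty}(W)$ forces the characteristic polynomial (and, more finely, the invariant factors of the associated $\kpinfty[X]$-module) to lie in $K[X]$; combining this with the nHT condition on the Sen weights and the comparison identification of Thm.~\ref{thm331kummersenmod}, one produces a $K$-form $M \subset D_{\Sen, \kinfty}(W)$ of the Kummer--Sen module that is stable under the Kummer--Sen operator and whose matrix in a $K$-basis of $M$ is $K$-rational. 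Setting $f := -E'(\pi) \cdot (\text{Kummer--Sen op})|_M$ and reversing the valuation estimate of the second paragraph then places the eigenvalues of $f$ in $\mathbb{Z} E'(\pi) + \fkm_{\O_{\overline{K}}}$, so $(M, f)$ is a valid object. Applying Prop.~\ref{representation comes from crystal} to $(M, f)$ yields a crystal whose $C$-representation matches $W$ by the full faithfulness just established. The main obstacle I anticipate is exactly this $K$-descent of the Kummer--Sen module: Hilbert~90 fails for $\kpinfty/K$ under nontrivial cyclotomic twists, so no such descent exists for general $W \in \rep_C(\gk)$, and it is precisely the nHT eigenvalue constraint---with the exact exponent $p^{-(e-1)/e}$---that allows the module-with-operator to be realized over $K$.
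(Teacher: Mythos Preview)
Your outline for the first step (the functor lands in $\rep_C^{\mathrm{nHT}}(\gk)$) matches the paper and is fine. The remaining two steps, however, contain genuine gaps.

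\textbf{Full faithfulness.} You reduce a $G_K$-equivariant map $W_1\to W_2$ to a $\kinfty$-linear map $M_1\otimes_K\kinfty\to M_2\otimes_K\kinfty$ commuting with the Kummer--Sen operators, and then assert this descends to a $K$-linear map $M_1\to M_2$. This descent is false in general: commuting with the Lie-algebra operator is strictly weaker than $G_K$-equivariance. Already for the trivial representation $W=C$ one has $M=K$ and $f=0$, so every $\kinfty$-linear endomorphism of $M\otimes_K\kinfty=\kinfty$ commutes with the (zero) Kummer--Sen operator, whereas only the $K$-multiples arise from $G_K$-maps. Your eigenspace argument does not repair this: within a fixed generalized eigenspace there is no mechanism forcing a $\kinfty$-linear map to be defined over $K$. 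The paper avoids this by a different route: injectivity is immediate since $M_i$ spans $W_i$; for the remaining inequality one passes to the internal Hom $\hom_C(W_1,W_2)$ using that the evaluation functor is a tensor functor, and is reduced to showing $M^{f=0}=W^{G_K}$ for a single crystal, which is exactly Sen's theorem \cite[p.~100, Thm.~6]{Sen80}.

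\textbf{Essential surjectivity.} You correctly flag the $K$-descent of the Sen module as the crux, but misdiagnose how it is resolved. You write that ``it is precisely the nHT eigenvalue constraint\dots that allows the module-with-operator to be realized over $K$''. This is not so: Sen proved \cite[p.~100, Thm.~5]{Sen80} that for \emph{every} $W\in\rep_C(\gk)$ there is a $K$-subspace $M\subset D_{\Sen,\kpinfty}(W)$ of full dimension stable under the Sen operator. The nHT hypothesis plays no role in producing $M$; it is used only afterwards, to check that $(M,\,-E'(\pi)\cdot\phi_{\Sen}|_M)$ satisfies the eigenvalue condition \eqref{eqeigen415}. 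Your sketch (``invariant factors lie in $K[X]$'', etc.) does not supply a proof of the existence of such an $M$, and the paper simply cites Sen's result. Once you have $M$, you must also check that the crystal built from $(M,f)$ via Cor.~\ref{cor416} evaluates to a $C$-representation isomorphic to $W$; the paper gets this from the full faithfulness already established (two objects with the same Sen data in $\rep_C(\gk)$ are isomorphic), not from Prop.~\ref{representation comes from crystal} alone.
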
 
\begin{proof}
Let $\bM \in \Vect((\calO_K)_{\Prism},\overline \calO_{\Prism}[1/p])$, we first show $W(\bM)$ is nearly Hodge-Tate.
Let $(M, f)$ be the associated pair via Cor. \ref{cor416}, then Thm. \ref{thmMWSen} implies $\frac{-1}{E'(\pi)}f$ is the Sen operator; hence   the condition \eqref{eqeigen415} translates into the condition \eqref{eqeigen} in Def. \ref{defnnht}. This shows $W(\bM)$ is nearly Hodge-Tate, and gives our desired functor
\begin{equation}\label{434functor}
\Vect((\calO_K)_{\Prism},\overline \calO_{\Prism}[ {1}/{p}]) \to \rep_\gk^{\mathrm{nHT}}(C).
\end{equation}

We now prove the functor is essentially surjective. 
For \emph{any}  $W \in \rep_\gk(C)$, let $\phi_{\Sen}$ be the Sen operator on $D_{\Sen, \kpinfty}(W)$. 
By \cite[p.100, Thm. 5]{Sen80}, there exists a sub-$K$-vector space of full dimension $M \subset D_{\Sen, \kpinfty}(W)$ which is stable under $\phi_{\Sen}$. \footnote{Note this sub-$K$-vector space is in general not unique. For example, consider the trivial $C$-representation whose Sen operator is the zero map, then \emph{any} sub-$K$-vector space is stable under the zero map.}
If $W$ is in addition nearly Hodge-Tate, then the eigenvalues of $\phi_{\Sen}|_M$ has to satisfy the condition in   \eqref{eqeigen}, and hence  $(M, f=-E'(\pi)\cdot \phi_{\Sen}|_M)$ is a pair satisfying condition \eqref{eqeigen415}.
Via Cor. \ref{cor416}, we can obtain a stratification and hence a rational Hodge-Tate prismatic crystal $\bm$. Let $W' =\bm((\ainf, (\xi))) \in \rep_\gk(C)$ be the associated representation, one needs to check it is isomorphic to $W$. By Thm. \ref{thmMWSen},
$D_{\Sen, \kinfty}(W') =M\otimes_K \kinfty$, and its $\kinfty$-Sen operator is precisely $\frac{-1}{E'(\pi)}f= \phi_{\Sen}|_M\otimes 1$; hence in particular, by Thm. \ref{thmkummersenop}, the $C$-linear  Sen   operator on $W'=M\otimes_K C$ is the \emph{same} as that on $W=M\otimes_K C$. Thus $W'$ is isomorphic to $W$ by Prop. \ref{propisomobj}. (We point to Rem. \ref{remconfusion} for a possible confusion in the construction above.)

We now prove the functor is fully faithful.
Let $\bm_1, \bm_2 \in \Vect((\calO_K)_{\Prism},\overline \calO_{\Prism}[\frac{1}{p}])$, 
let $(M_1, f_1), (M_2, f_2)$ be the corresponding pairs  via Cor. \ref{cor416},
and let $W_1, W_2 \in \rep_\gk^{\mathrm{nHT}}(C)$ be the corresponding representations.
It suffices to show
$$\mathrm{Morph}((M_1, f_1), (M_2, f_2)) \to \mathrm{Morph}(W_1, W_2) $$
is a bijection. It is injective because $M_i$ contains a basis of $W_i$; it hence suffices to show both sides have the same $K$-dimension. 
Note that \eqref{434functor} is a tensor functor  and note $f_i$'s are (scaled) Sen operators. 
By considering the $C$-representation $\hom_C(W_1, W_2)$, it suffices to show the following statement: given $\bm \in \Vect((\calO_K)_{\Prism},\overline \calO_{\Prism}[\frac{1}{p}])$ with corresponding $(M, f)$ and $W$, then we have
\begin{equation*}
M^{f=0} =  W^{G_K};
\end{equation*}
but this follows from \cite[p.100, Thm. 6]{Sen80} (which implies the above two spaces are isomorphic after tensoring with $C$).
\end{proof}
 
 
\begin{remark}\label{remconfusion}
 Let us point out a possible confusion in the proof of essential surjectivity above.
 In the proof, we obtained a $C$-linear isomorphim $W \to W'$ compatible with the $C$-linear Sen operators via 
\begin{equation}\label{eqlinsen}
W \simeq M\otimes_K C \simeq \bm((\gs, E))\otimes_K C   \simeq W'
\end{equation}
   where the first isomorphism follows from Sen's construction and the rest follow by construction in Notation \ref{consrescrys}. Nonetheless, the ``identity" map  $M\otimes_K C \simeq \bm((\gs, E))\otimes_K C $ is \emph{not} ``$G_K$-equivariant"! 
   Indeed,  the $\tau$-action on $M$ is trivial under classical Sen theory; whereas the $\tau$-action on $\bm((\gs, E))$ is in general not trivial by Prop. \ref{representation comes from crystal}.
   The powerful aspect of Sen's theorem \cite[p. 101, Thm. 7]{Sen80}, is that the $\phi_\Sen$-equivariant map \eqref{eqlinsen} already guarantees existence of a $G_K$-equivariant isomorphism between $W$ and $W'$, although in an \emph{implicit} way!
\end{remark}

\subsection{Comparison with a theorem of Sen} \label{subsec44}
We recall a (rather peculiar) theorem of Sen which also concerns about ``$K$-rationality" of the Sen operator, and make some  comparisons   with our result.

\begin{theorem} \label{thmsenthm10}
\cite[p.110, Thm. 10]{Sen80}
Suppose the residue field $k$ is \emph{algebraically closed}. Then there exists a  (\emph{non-canonical}) equivalence  between $\rep_\gk(C)$ and the category of $K$-vector spaces equipped with a linear operator. This non-canonical equivalence, which depend on choices for each simple object in $\rep_\gk(C)$, is \emph{never} compatible with tensor products.
\end{theorem}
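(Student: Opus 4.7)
The plan is to use Sen theory as the bridge (much as in the proof of Thm.~\ref{thmnht}), but without restricting to the nearly Hodge-Tate subcategory; the price paid is the loss of both canonicity and tensor-compatibility. The three tasks are to define the functor $\rep_C(\gk)\to\{(M,f)\}$, exhibit a quasi-inverse, and obstruct tensor-compatibility.

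First, given $W\in\rep_C(\gk)$, I form the Sen module $D=D_{\Sen,\kpinfty}(W)$ with its $\kpinfty$-linear Sen operator $\phi_{\Sen}$. By Sen's Thm.~5 (the same input already invoked in the proof of Thm.~\ref{thmnht}), there exists a $K$-sub-vector space $M\subset D$ of full dimension stable under $\phi_{\Sen}$; set $F(W)=(M,\phi_{\Sen}|_M)$. This choice is highly non-unique: any other $\phi_{\Sen}$-stable $K$-rational form gives another valid $M$. To obtain an honest functor, one fixes such a choice for each isomorphism class of simple object in $\rep_C(\gk)$ and extends; the algebraic closedness of $k$ is used here to guarantee that one has a sufficiently rich supply of simple objects and Kummer-type splittings of their eigenvalues. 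For the quasi-inverse, given $(M,f)$, I extend to $D:=M\otimes_K\kpinfty$ with Sen operator $f\otimes 1$, and reconstruct the semilinear action of a small open subgroup $\Gamma_n\subset\Gamma_K$ via the exponential formula $\gamma\mapsto \exp\bigl(\log\chi(\gamma)\cdot f\bigr)$. Sen's equivalence $\rep_C(\gk)\simeq\rep_{\kpinfty}(\Gamma_K)$ then yields the desired $C$-representation $W$, and the identification $D_{\Sen,\kpinfty}(W)=M\otimes_K\kpinfty$ gives quasi-inverseness.

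For the incompatibility with tensor products, note that the tensor structure on pairs is $(M_1,f_1)\otimes(M_2,f_2)=(M_1\otimes M_2,\,f_1\otimes 1+1\otimes f_2)$, which on $1$-dimensional objects becomes $(K,a)\otimes(K,b)=(K,a+b)$. A tensor-compatible equivalence would force the Sen-weight of a $1$-dimensional $C$-representation (an element of $C$) to be additively realized by an element of $K$, i.e.\ would produce a $\Qp$-linear section $C\to K$ of the inclusion $K\hookrightarrow C$. No such section exists since $C/K$ has uncountable $K$-dimension as a $\Qp$-vector space, ruling out any tensor-compatible choice.

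The hard part will be step two: upgrading the locally defined exponential formula to a genuine continuous $\Gamma_K$-action (and hence to a $G_K$-action on $W=D\otimes_\kpinfty C$). The exponential $\exp(\log\chi(\gamma)\cdot f)$ only converges for $\gamma$ in a small open subgroup, and extending consistently through the full $\Gamma_K$-action requires exactly the algebraically closed residue field hypothesis: it kills finite-group cohomological obstructions and permits the Kummer-theoretic splittings that realize arbitrary eigenvalues of $f$ in $\overline K$ as Sen weights. This is precisely the step that the author's theorem Thm.~\ref{thmnht} avoids by restricting to the nearly Hodge-Tate subcategory, where the eigenvalue condition \eqref{eqeigen} makes the exponential converge unconditionally and no auxiliary hypothesis on $k$ is needed--gaining canonicity and tensor compatibility at the cost of generality.
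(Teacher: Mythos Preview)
First, note that the paper does not prove this result: it is a citation of Sen's theorem, and the paper only sketches Sen's argument in the paragraph \ref{remsen10} that follows. So the relevant comparison is between your outline and that sketch of Sen's original proof.

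Your forward functor is underspecified at the crucial point. You say ``fix a choice for each simple object and extend'', but you do not say how to extend. Sen's device (recorded in \ref{remsen10}(3)) is to send a general $W$ first to its semisimplification $W_S$, use the chosen $K$-form $(W_S)_0$ for $W_S$, and then observe that $(W_S)_0$ is stable under the Sen operator of $W$ itself (not just of $W_S$), because the Sen operator is Galois-equivariant. Without this step you do not have a functor on non-semisimple objects.

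Your quasi-inverse is not the route Sen takes, and the gap you flag is real but not resolved by your remarks. The exponential $\exp(\log\chi(\gamma)\cdot f)$ gives only an action of an open subgroup of $\Gamma_K$, and you then need to produce a full $\Gamma_K$-action extending it; invoking ``algebraically closed residue field kills cohomological obstructions'' is not an argument. In Sen's proof (summarized in \ref{remsen10}(2)) the role of the hypothesis $k=\overline k$ is to show that every $K$-matrix actually occurs as a Sen operator (\cite[p.104, Thm.~9]{Sen80}); this yields essential surjectivity of the forward functor, and one never writes down an explicit quasi-inverse. The equivalence is obtained by combining full faithfulness (from Sen's theory) with this essential surjectivity, not by exhibiting mutually inverse functors.

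Your tensor-incompatibility argument is incorrect. The Sen weight of a $1$-dimensional $C$-representation is not an arbitrary element of $C$: by Sen's $K$-rationality theorem \cite[p.100, Thm.~5]{Sen80}, the Sen operator on any $W$ stabilizes a $K$-form, which for $\dim W=1$ forces the Sen weight to lie in $K$. So no section $C\to K$ is being demanded, and your obstruction evaporates. In fact, on $1$-dimensional objects the Sen weight already gives a group isomorphism between isomorphism classes of $1$-dimensional $C$-representations and $(K,+)$ when $k=\overline k$, so tensor-compatibility holds there automatically. The genuine failure of tensor-compatibility (asserted in the statement) lies with higher-dimensional objects and the non-canonical choice of $K$-forms; your argument does not address this.
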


\begin{para}\label{remsen10}
Let us say a few words about the rather technical proof of the above theorem. 
\begin{enumerate}
\item 
Recall, as we already used in the proof of our Thm. \ref{thmnht}, Sen shows in  \cite[p.100, Thm. 5]{Sen80} that for any $C$-representation, the Sen operator is   ``$K$-rational": that is, it is stable on a (certainly non-unique!) sub-$K$-vector space.

\item 
 Conversely, when the residue field $k$ is algebraically closed, \cite[p.104, Thm. 9]{Sen80} shows that any $K$-matrix can be recovered as a Sen operator for some $C$-representation.
Note however, as stated in  \cite[p.105, Thm. 9']{Sen80}, this only constructs a   \emph{set bijection} between   $H^1(G_K, \GL_n(C))$ and the set of similarity classes of $n\times n$ matrices over $K$. One still needs to build some \emph{functoriality} into this bijection. 

\item 
Sen then \emph{chooses}, for each \emph{simple} $C$-representation $U$, a $K$-vector space $U_0$ stable under Sen operator; this builds a functor for \emph{semi-simple} objects. To get the complete functor, one first map a general $C$-representation $W$ to its semi-simple part $W_S$, then one notes $(W_S)_0$ is still stable under the Sen operator of $W$ (not just that of $W_S$!), since Sen operator is Galois equivariant.
\end{enumerate}
\end{para}

\begin{remark}\label{remfon214}
In the remark following \cite[Thm. 2.14]{Fon04}, Fontaine mentions that Thm. \ref{thmsenthm10} can also be obtained using his \emph{classification} of $C$-representations: this  also points to the  non-canonicity of Thm. \ref{thmsenthm10}.
\end{remark}

\begin{para}

In fact, in $p$-adic Hodge theory, many typical  theorems say that there is an equivalence of categories:
$$\mathrm{MOD} \simeq \mathrm{REP}$$
where $\mathrm{MOD}$ is certain ``module category" and $\mathrm{REP}$ is certain ``representation category". We invite the readers to consider the examples such as results of Bhatt-Scholze \cite{BSFcrystal}, of Cherbonnier-Colmez \cite{CC98}, or Thm. \ref{thmnht}.
In proofs of these results, it is normally easy to construct a  \emph{functor} from  $\mathrm{MOD}$ to $\mathrm{REP}$ (sometimes perhaps initially to a bigger representation category, such as in our  Thm. \ref{thmnht}). The most difficult part almost always is to show the functor is essentially surjective (or, to determine its essential image).

What happens with Sen's result Thm. \ref{thmsenthm10}, is that there is not even an obvious \emph{map} from $\mathrm{MOD}$ to $\mathrm{REP}$; 
Sen can still construct a such map, by the \emph{set bijection} mentioned in  \ref{remsen10}(2): but then this is hopeless to be a \emph{functor}. This forces Sen to go the other way around, by constructing a \emph{functor} from  $\mathrm{REP}$ to $\mathrm{MOD}$, which then unfortunately relies on many choices, and cannot be canonical. 
We regard this strong contrast with our Thm. \ref{thmnht} as another hint (in addition to  Rem. \ref{remintro115}) that the nearly Hodge-Tate representations deserve to be further studied and applied.
  \end{para}

 
 \bibliographystyle{alpha}

\end{document}